\documentclass[12pt,reqno]{amsart}

\textheight=21truecm
\textwidth=15truecm
\voffset=-1cm
\hoffset=-1cm

\usepackage{color}
\usepackage{amsmath, amsthm, amssymb}
\usepackage{amsfonts}
\usepackage[ansinew]{inputenc}
\usepackage[dvips]{epsfig}
\usepackage{graphicx}
\usepackage[english]{babel}
\usepackage{hyperref}
\theoremstyle{plain}
\newtheorem{thm}{Theorem}[section]
\newtheorem{cor}[thm]{Corollary}
\newtheorem{lem}[thm]{Lemma}
\newtheorem{prop}[thm]{Proposition}

\theoremstyle{definition}
\newtheorem{defi}[thm]{Definition}

\theoremstyle{remark}
\newtheorem{rem}[thm]{Remark}

\numberwithin{equation}{section}

\newcommand{\average}{{\mathchoice {\kern1ex\vcenter{\hrule height.4pt
width 6pt depth0pt} \kern-9.7pt} {\kern1ex\vcenter{\hrule
height.4pt width 4.3pt depth0pt} \kern-7pt} {} {} }}

\def\R{\mathbb{R}}

\begin{document}

\title[Boundary regularity for nonlocal equations in $C^1$ and $C^{1,\alpha}$ domains]{Boundary regularity estimates for \\ nonlocal elliptic equations in $C^1$ and $C^{1,\alpha}$ domains}

\author{Xavier Ros-Oton}
\address{The University of Texas at Austin, Department of Mathematics, 2515 Speedway, Austin, TX 78751, USA}
\email{ros.oton@math.utexas.edu}

\author{Joaquim Serra}
\address{Universitat Polit\`ecnica de Catalunya, Departament de Matem\`atiques, Diagonal 647, 08028 Barcelona, Spain}
\email{joaquim.serra@upc.edu}

\keywords{nonlocal equations, boundary regularity, $C^1$ domains.}

\maketitle

\begin{abstract}
We establish sharp boundary regularity estimates in $C^1$ and $C^{1,\alpha}$ domains for nonlocal problems of the form $Lu=f$ in $\Omega$, $u=0$ in $\Omega^c$.
Here, $L$ is a nonlocal elliptic operator of order $2s$, with $s\in(0,1)$.

First, in $C^{1,\alpha}$ domains we show that all solutions $u$ are $C^s$ up to the boundary and that $u/d^s\in C^\alpha(\overline\Omega)$, where $d$ is the distance to $\partial\Omega$.

In $C^1$ domains, solutions are in general not comparable to $d^s$, and we prove a boundary Harnack principle in such domains.
Namely, we show that if $u_1$ and $u_2$ are positive solutions, then $u_1/u_2$ is bounded and H\"older continuous up to the boundary.

Finally, we establish analogous results for nonlocal equations with bounded measurable coefficients in non-divergence form.
All these regularity results will be essential tools in a forthcoming work on free boundary problems for nonlocal elliptic operators \cite{CRS-obstacle}.
\end{abstract}

\vspace{4mm}

\section{Introduction and results}

In this paper we study the boundary regularity of solutions to nonlocal elliptic equations in $C^1$ and $C^{1,\alpha}$ domains.
The operators we consider are of the form
\begin{equation}\label{operator-L}
Lu(x)=\int_{\R^n}\left(\frac{u(x+y)+u(x-y)}{2}-u(x)\right)\frac{a(y/|y|)}{|y|^{n+2s}}\,dy,
\end{equation}
with
\begin{equation}\label{ellipt-const}
\qquad \qquad \qquad 0<\lambda\leq a(\theta)\leq \Lambda,\qquad \theta\in S^{n-1}.
\end{equation}
When $a\equiv ctt$, then $L$ is a multiple of the fractional Laplacian $-(-\Delta)^s$.

We consider solutions $u\in L^\infty(\R^n)$ to
\begin{equation}\label{pb}
\left\{ \begin{array}{rcll}
L u &=&f&\textrm{in }B_1\cap \Omega \\
u&=&0&\textrm{in }B_1\setminus\Omega,
\end{array}\right.\end{equation}
with $f\in L^\infty(\Omega\cap B_1)$ and $0\in\partial\Omega$.

\vspace{2mm}

When $L$ is the Laplacian $\Delta$, then the following are well known results:
\begin{itemize}
\item[(i)] If $\Omega$ is $C^{1,\alpha}$, then $u\in C^{1,\alpha}(\overline\Omega\cap B_{1/2})$.
\item[(ii)] If $\Omega$ is $C^1$, then solutions are in general \emph{not} $C^{0,1}$.
\end{itemize}
Still, in $C^1$ domains one has the following boundary Harnack principle:
\begin{itemize}
\item[(iii)] If $\Omega$ is $C^1$, and $u_1$ and $u_2$ are positive in $\Omega$, with $f\equiv0$, then $u_1$ and $u_2$ are comparable in $\overline\Omega\cap B_{1/2}$, and $u_1/u_2\in C^{0,\gamma}(\overline\Omega\cap B_{1/2})$ for some small $\gamma>0$.
\end{itemize}
Actually, (iii) holds in general Lipschitz domains (for $\gamma$ small enough), or even in less regular domains; see \cite{BHP1,BHP2}.
Analogous results hold for more general second order operators in non-divergence form $L=\sum_{i,j}a_{ij}(x)\partial_{ij}u$ with bounded measurable coefficients $a_{ij}(x)$ \cite{BHP3}.

\vspace{2mm}

The aim of the present paper is to establish analogous results to (i) and (iii) for nonlocal elliptic operators $L$ of the form \eqref{operator-L}-\eqref{ellipt-const}, and also for non-divergence operators with bounded measurable coefficients.

\addtocontents{toc}{\protect\setcounter{tocdepth}{1}}  
\subsection{$C^{1,\alpha}$ domains}

When $L=\Delta$ in \eqref{pb} and $\Omega$ is $C^{k,\alpha}$, then solutions $u$ are as regular as the domain $\Omega$ provided that $f$ is regular enough.
In particular, if $\Omega$ is $C^\infty$ and $f\in C^\infty$ then $u\in C^\infty(\overline\Omega)$.

When $L=-(-\Delta)^s$, then the boundary regularity is well understood in $C^{1,1}$ and in $C^\infty$ domains.
In both cases, the optimal H\"older regularity of solutions is $u\in C^s(\overline\Omega)$, and in general one has $u\notin C^{s+\epsilon}(\overline\Omega)$ for any $\epsilon>0$.
Still, higher order estimates are given in terms of the regularity of $u/d^s$: if $\Omega$ is $C^\infty$ and $f\in C^\infty$ then $u/d^s\in C^\infty(\overline \Omega)$; see Grubb \cite{Grubb,Grubb2}.
Here, $d(x)={\rm dist}(x,\R^n\setminus\Omega)$.

We prove here a boundary regularity estimate of order $s+\alpha$ in $C^{1,\alpha}$ domains.
Namely, we show that if $\Omega$ is $C^{1,\alpha}$ then $u/d^s\in C^\alpha(\overline\Omega)$, as stated below.

We first establish the optimal H\"older regularity up to the boundary, $u\in C^s(\overline\Omega)$.

\begin{prop}\label{prop-Cs}
Let $s\in(0,1)$, $L$ be any operator of the form \eqref{operator-L}-\eqref{ellipt-const}, and $\Omega$ be any bounded $C^{1,\alpha}$ domain.
Let $u$ be a solution of \eqref{pb}.
Then,
\[\|u\|_{C^s(B_{1/2})}\leq C\left(\|f\|_{L^\infty(B_1\cap\Omega)}+\|u\|_{L^\infty(\R^n)}\right).\]
The constant $C$ depends only on $n$, $s$, $\Omega$, and ellipticity constants.
\end{prop}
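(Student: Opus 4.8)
The plan is to first establish the optimal boundary growth estimate
\begin{equation}\label{growth-sketch}
|u(x)|\le C\bigl(\|f\|_{L^\infty(B_1\cap\Omega)}+\|u\|_{L^\infty(\R^n)}\bigr)\,d(x)^s\qquad\text{for }x\in B_{1/2},
\end{equation}
writing $M:=\|f\|_{L^\infty(B_1\cap\Omega)}+\|u\|_{L^\infty(\R^n)}$, and then to upgrade it to the claimed $C^s$ bound by combining it with interior estimates and rescaling. To prove \eqref{growth-sketch} it suffices, since $\partial\Omega\cap\overline{B_{1/2}}$ is compact and $\Omega$ is $C^1$, to work near a single boundary point, which after translation, rotation and dilation I take to be the origin, with the normalization $\{x_n>\varrho\}\cap B_1\subset\Omega\cap B_1\subset\{x_n>-\varrho\}\cap B_1$ for a small flatness parameter $\varrho>0$ to be fixed depending only on $n$, $s$, $\lambda$, $\Lambda$; points of $B_{1/2}$ at a fixed distance from $\partial\Omega$ are handled directly by the interior regularity estimate for $L$, so the dependence on $\Omega$ enters only through finitely many such radii.

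The core of the argument is the barrier. For every $L$ of the form \eqref{operator-L}--\eqref{ellipt-const} there is a one-dimensional profile $\psi\colon\R\to[0,\infty)$ with $\psi\equiv0$ on $(-\infty,0]$, $\psi\in C^s(\R)\cap C^\infty\bigl((0,\infty)\bigr)$, and $c\,t^s\le\psi(t)\le C\,t^s$ on $(0,1)$, such that $L\bigl(\psi(x_n)\bigr)=0$ in $\{x_n>0\}$, since the equation for a function of $x_n$ alone reduces to a one-dimensional nonlocal problem. I would superpose suitably rescaled copies of $\psi(x_n+\varrho)$, subtract a small quadratic correction in $x'$, and use the smallness of $\varrho$ to absorb the resulting error terms, obtaining $\phi\ge0$ with $L\phi\le-1$ in $\{0<x_n<\tfrac14\}\cap\{|x'|<\tfrac14\}\cap\Omega$, $\phi\ge1$ outside this set, and $\phi(x)\le C\,d(x)^s$ near $\partial\Omega$. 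The comparison principle then gives $\pm u\le CM\phi$ in $\Omega\cap B_{1/4}$, and iterating this inequality on the dyadic balls $B_{2^{-k}}$ yields \eqref{growth-sketch}.

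Once \eqref{growth-sketch} is known, the $C^s$ bound follows from the usual dichotomy. For $x,y\in B_{1/2}$ with $\varrho_x:=d(x)\ge d(y)$: if $|x-y|\ge\varrho_x/2$, then \eqref{growth-sketch} gives $|u(x)-u(y)|\le CM\bigl(d(x)^s+d(y)^s\bigr)\le CM|x-y|^s$; if $|x-y|<\varrho_x/2$, then $B_{3\varrho_x/4}(x)\subset\Omega$, so \eqref{growth-sketch} bounds $\|u\|_{L^\infty(B_{3\varrho_x/4}(x))}$ by $CM\varrho_x^s$, and rescaling $v(z):=u(x+\varrho_x z)$, which solves $Lv=\varrho_x^{2s}f(x+\varrho_x z)$ in $B_{3/4}$, and applying the interior estimate for $L$ (with the nonlocal tail of $v$ controlled by $\|u\|_{L^\infty(\R^n)}$) gives $[v]_{C^s(B_{1/2})}\le CM\varrho_x^s$, hence $[u]_{C^s(B_{\varrho_x/2}(x))}\le CM$ and again $|u(x)-u(y)|\le CM|x-y|^s$. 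Combining with the trivial bound $\|u\|_{L^\infty(B_{1/2})}\le M$ gives $\|u\|_{C^s(B_{1/2})}\le CM$.

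The main obstacle is the barrier construction: since $\Omega$ is only $C^1$, there is no exterior tangent ball at $0$, so the barrier cannot be taken to be a power of the distance to an exterior ball and must instead be built from the half-space profile $\psi$, with the error coming from the curvature of $\partial\Omega$ absorbed through the flatness parameter $\varrho$. Establishing the existence and the sharp two-sided behavior of $\psi$ for a \emph{general} kernel $a(\theta)$, rather than only for the fractional Laplacian, and keeping the nonlocal tail terms under control when rescaling in the last step, are the points requiring genuine care.
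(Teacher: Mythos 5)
Your second step --- upgrading a boundary growth estimate $|u|\le CMd^s$ to the claimed $C^s$ bound by combining it with the interior estimate at scale $d(x)$ and controlling the tail through $\|u\|_{L^\infty(\R^n)}$ --- is essentially the same as the paper's (Proposition \ref{prop-Cs-2}). The gap is in the barrier meant to produce the growth estimate. You propose to build $\phi$ by perturbing the one-dimensional half-space profile $\psi(x_n+\varrho)$, which up to a positive factor is $(x_n+\varrho)_+^s$, and you claim the resulting $\phi$ satisfies $\phi\le C\,d^s$ near $\partial\Omega$. This cannot happen: every boundary point $x^*\in\partial\Omega\cap B_{1/4}$ in your flat coordinates has $|x^*_n|\lesssim\varrho$, so $(x^*_n+\varrho)_+^s\gtrsim\varrho^s>0$ there, while $d(x^*)=0$; a quadratic correction in $x'$ makes the barrier vanish on a paraboloid, not on $\partial\Omega$. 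Thus a single comparison with such a $\phi$ only gives $|u|\lesssim M\varrho^s$ in a neighborhood of $0$, not $|u|\lesssim Md^s$. The "iterate on dyadic balls" step then has to produce the sharp exponent $s$, and as sketched it is not a barrier argument at all: on $\partial B_{2^{-k}}(z)$, and more importantly on $\R^n\setminus B_{2^{-k}}(z)$ (which matters for the nonlocal comparison principle), the rescaled half-space profile is $\lesssim(x_n)_+^s$ and hence vanishes near the boundary, whereas the inductive hypothesis only controls $\sup_{B_{2^{-j}}(z)}|u|$ and does not dominate $u$ pointwise where $x_n$ is small but $|x'|$ is large. A scale-by-scale improvement can be salvaged via a compactness/Liouville argument (this is close to what the paper does in Lemma \ref{lemvarphi} for $C^1$ domains), but that route gives the weaker $d^{s\pm\epsilon}$ bounds, not $d^s$, unless one adds quantitative error tracking using the $\alpha$-H\"older modulus --- none of which is in your sketch.

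The paper avoids this entirely by building the supersolution from the regularized distance $\psi$ of the actual domain (Definition \ref{d-is-psi}: $\psi\asymp d$, $\psi\in C^{1,\alpha}(\overline\Omega)$, $|D^2\psi|\le Cd^{\alpha-1}$), not from a flat profile. The key computation is Proposition \ref{lem11}: $|L(\psi^s)|\le Cd^{\alpha-s}$, proved by comparing $\psi^s$ pointwise to the linearization $\ell^s=(\psi(x_0)+\nabla\psi(x_0)\cdot(x-x_0))_+^s$ and integrating the resulting error $|\psi^s-\ell^s|\lesssim|y|^{1+\alpha}(d^{s-1}+\ell^{s-1})$ against the kernel (Lemmas \ref{lem13}--\ref{lem15}). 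Combined with $L(\psi^{s+\epsilon})\ge cd^{\epsilon-s}-C$ (Lemma \ref{lem12}), the function $\phi_1=\psi^s-c\psi^{s+\epsilon}$ is a genuine supersolution, comparable to $d^s$, and vanishing on $\partial\Omega$ (Lemma \ref{supersol}); one application of the comparison principle then gives $|u|\le Cd^s$, with no iteration. If you want to follow your route you should either switch to the regularized-distance barrier, or set up the dyadic improvement with a quantitative Liouville step and track the geometric decay of the flatness error --- the half-space barrier by itself does not vanish on $\partial\Omega$ and cannot give the bound.
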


Our second result gives a finer description of solutions in terms of the function~$d^s$, as explained above.

\begin{thm}\label{thm-bdry-reg}
Let $s\in(0,1)$ and $\alpha\in(0,s)$.
Let $L$ be any operator of the form \eqref{operator-L}-\eqref{ellipt-const}, $\Omega$ be any $C^{1,\alpha}$ domain, and $d$ be the distance to $\partial\Omega$.
Let $u$ be a solution of \eqref{pb}.
Then,
\[\|u/d^s\|_{C^{\alpha}(B_{1/2}\cap\overline\Omega)}\leq C\left(\|f\|_{L^\infty(B_1\cap\Omega)}+\|u\|_{L^\infty(\R^n)}\right).\]
The constant $C$ depends only on $n$, $s$, $\alpha$, $\Omega$, and ellipticity constants.
\end{thm}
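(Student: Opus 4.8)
The plan is to prove Theorem \ref{thm-bdry-reg} by a blow-up and compactness argument, combined with a Liouville-type classification theorem for the limiting global problem. The strategy closely parallels the now-standard approach to boundary regularity for nonlocal equations, but with the difficulty that in a $C^{1,\alpha}$ domain the boundary is only flat to order $1+\alpha$, so one cannot expect better than $C^\alpha$ regularity of $u/d^s$, and all estimates must be tracked with this $\alpha$.

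First I would establish, using Proposition \ref{prop-Cs} and interior estimates, a preliminary statement: $u/d^s$ is bounded near $\partial\Omega$, with the estimate $\|u/d^s\|_{L^\infty(B_{1/2}\cap\Omega)}\le C(\|f\|_{L^\infty}+\|u\|_{L^\infty})$. This should follow from comparison with suitable barriers — one needs a supersolution and subsolution that behave like $d^s$ near the boundary, which in a $C^{1,\alpha}$ domain can be built from the explicit $s$-harmonic function $(x_n)_+^s$ in a half-space, corrected by the curvature of $\partial\Omega$; here one uses that the operators $L$ of the form \eqref{operator-L}–\eqref{ellipt-const} annihilate (up to controlled error) the one-dimensional profile $(x\cdot e)_+^s$ for each direction $e$. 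Next I would prove the key expansion-at-a-point estimate: for each boundary point $x_0\in\partial\Omega\cap B_{1/2}$ there is a constant $Q(x_0)$ with
\[
\bigl|u(x)-Q(x_0)\,d^s(x)\bigr|\le C\bigl(\|f\|_{L^\infty}+\|u\|_{L^\infty}\bigr)\,|x-x_0|^{s+\alpha}\qquad\text{for }x\in B_{1/2}\cap\Omega,
\]
and that $x_0\mapsto Q(x_0)$ is $C^\alpha$ on $\partial\Omega\cap B_{1/2}$. Granting this, Theorem \ref{thm-bdry-reg} follows by a routine interpolation argument combining the pointwise expansion, the $C^\alpha$ dependence of $Q$, and interior $C^s$ (indeed $C^{2s-\epsilon}$ away from the boundary) estimates rescaled to dyadic annuli around $\partial\Omega$.

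The heart of the matter is the pointwise expansion, which I would obtain by contradiction and compactness. Suppose it fails: then there are operators $L_k$, domains $\Omega_k$ (all uniformly $C^{1,\alpha}$, with $0\in\partial\Omega_k$), solutions $u_k$, and radii $r_k\downarrow 0$ for which the best approximation of $u_k$ by multiples of $d_k^s$ on $B_{r_k}$ has excess $\theta_k$ satisfying a normalization like $\theta_k(r_k):=r_k^{-s-\alpha}\|u_k-Q_k d_k^s\|_{L^\infty(B_{r_k})}\to\infty$, where $Q_k$ is chosen optimally (a projection orthogonal to multiples of $d_k^s$, enforced by a monotonicity-in-$r$ argument à la Caffarelli so that the normalized functions $v_k(x)=(u_k-Q_k d_k^s)(r_k x)/(r_k^{s+\alpha}\theta_k(r_k))$ have controlled growth $\|v_k\|_{L^\infty(B_R)}\le CR^{s+\alpha}$ for all $R\ge 1$). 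The rescaled equation satisfied by $v_k$ has right-hand side $\to 0$ (since $f$ is bounded and $s+\alpha<2s$ would need care — in fact one must assume $\alpha<s$ precisely so that $r_k^{s+\alpha-2s}r_k^{2s}\|f\|_\infty$ is a lower-order term; this is where the hypothesis $\alpha\in(0,s)$ is used), and the domains $\Omega_k$ rescaled by $1/r_k$ converge to a half-space $\{x\cdot e>0\}$ because they are $C^{1,\alpha}$ and hence the $r_k$-rescaled boundaries flatten. After passing to a subsequence, $v_k\to v$ locally uniformly, with $v$ solving $L_0 v=0$ in the half-space, $v=0$ outside, $v$ growing at most like $|x|^{s+\alpha}$, and — crucially — the orthogonality/optimality of $Q_k$ passing to the limit to force $v$ to have \emph{no} $d_0^s=(x\cdot e)_+^s$ component in its expansion at $0$. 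The Liouville theorem then says: any such $v$ with growth $|x|^{s+\alpha}$, $s+\alpha<2s$, must be a multiple of $(x\cdot e)_+^s$, contradicting the orthogonality unless $v\equiv0$; but $\|v_k\|_{L^\infty(B_1)}=1$ by normalization, giving $\|v\|_{L^\infty(B_1)}=1$, a contradiction.

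The main obstacle, and the step I would spend the most effort on, is the \textbf{Liouville theorem in the half-space}: if $v\in C(\R^n)$ satisfies $L_0 v=0$ in $\{x_n>0\}$, $v=0$ in $\{x_n\le 0\}$, and $\|v\|_{L^\infty(B_R)}\le CR^{s+\alpha}$ with $s+\alpha<2s$, then $v(x)=c(x_n)_+^s$. The natural route is a second incremental-quotients argument: for a tangential direction $e'$ and increment $h$, the function $w_h=(v(x+he')-v(x))/|h|^{s+\alpha-s_0}$ for a suitable exponent still solves the equation in the half-space with zero exterior data, and interior estimates plus the growth bound force $w_h$ to have sublinear — in fact, by iteration, decaying — growth, hence $w_h\equiv 0$; so $v$ depends only on $x_n$, and a one-dimensional ODE-type analysis (or direct classification of $s$-harmonic functions on the half-line vanishing on $(-\infty,0]$ with subquadratic-in-the-$2s$-sense growth) gives $v=c(x_n)_+^s$. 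Making the tangential-derivative argument rigorous requires a Liouville/Hölder estimate at the boundary for the half-space problem with controlled growth — essentially a cleaner, flat-boundary version of the very estimate being proven — so some care is needed to set this up without circularity; typically one proves first a weaker half-space statement (boundedness of $v/d^s$ with only $C^{s+\alpha}$ growth, $\alpha$ small) by barriers, and bootstraps. The remaining technical points — constructing the barriers in $C^{1,\alpha}$ domains with the right $d^s$ behavior and error $O(d^{s+\alpha})$, verifying stability of the class of operators under the rescalings and the weak continuity needed to pass $L_k\to L_0$ to the limit (this uses that the kernels $a(\theta)/|y|^{n+2s}$ are scale-invariant, so $L_k$ are literally of the same form and one only needs compactness of $\{a:\lambda\le a\le\Lambda\}$ in, say, the weak-$*$ topology together with the dominated-convergence passage in the integral operator on test functions), and the orthogonality bookkeeping for $Q_k$ — are routine adaptations of existing arguments.
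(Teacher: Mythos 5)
Your overall strategy matches the paper's: barriers to get boundedness of $u/d^s$, a blow-up–compactness argument for a pointwise expansion $|u(x)-Q(x_0)d^s(x)|\le C|x-x_0|^{s+\alpha}$ at boundary points (with the multiplier $Q$ chosen by an $L^2$-projection to enforce orthogonality in the limit), convergence of the rescaled domains to a half-space, and a Liouville classification $v=\kappa(x\cdot e)_+^s$ to close the contradiction. Two points of divergence are worth flagging. First, the Liouville theorem you single out as ``the heart of the matter'' and propose to prove by incremental quotients is not part of this paper's argument at all: it is imported wholesale from \cite[Theorem 4.1]{RS-stable}, so the effort you budget there is spent on an already-available tool; your sketch of its proof is plausible but would be a duplication. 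Second, you underweight what is arguably the actual technical crux here: one cannot write $L(d^s)$ directly, because $d$ is only $C^{1,\alpha}$ inside $\Omega$ and $d^s$ is not regular enough to evaluate the integro-differential operator pointwise. The paper handles this by introducing a \emph{regularized distance} $\psi$ (Definition \ref{d-is-psi}) that is $C^{1,\alpha}(\overline\Omega)$ with $|D^2\psi|\le Cd^{\alpha-1}$, proving $|L(\psi^s)|\le Cd^{\alpha-s}$ (Proposition \ref{lem11}) via a pointwise comparison with the half-space profile and the coarea estimate of Lemma \ref{lem15}, and then carrying out the entire blow-up with $\psi^s$ rather than $d^s$; the passage back to $d^s$ is only at the very end, using $\psi^s/d^s\in C^\alpha(\overline\Omega)$. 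Your remark about ``constructing the barriers...with error $O(d^{s+\alpha})$'' gestures at this but does not identify the regularization issue, and without it the plan has a hole: the object $L(d^s)$ that your barrier estimate implicitly invokes does not make classical sense. Once that is repaired by working with $\psi^s$, your argument is essentially the paper's proof of Proposition \ref{prop-bdry-reg} and Theorem \ref{thm-bdry-reg}.
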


The previous estimate in $C^{1,\alpha}$ domains was only known for the half-Laplacian $(-\Delta)^{1/2}$; see De Silva and Savin \cite{DS}.
For more general nonlocal operators, such estimate was only known in $C^{1,1}$ domains \cite{RS-stable}.

The proofs of Proposition \ref{prop-Cs} and Theorem \ref{thm-bdry-reg} follow the ideas of \cite{RS-stable}, where the same estimates were established in $C^{1,1}$ domains.
One of the main difficulties in the present proofs is the construction of appropriate barriers.
Indeed, while any $C^{1,1}$ domain satisfies the interior and exterior ball condition, this is not true anymore in $C^{1,\alpha}$ domains, and the construction of barriers is more delicate.
We will need a careful computation to show that
\[|L(d^s)|\leq Cd^{\alpha-s}\quad\textrm{in}\ \Omega.\]
In fact, since $d^s$ is not regular enough to compute $L$, we need to define a new function $\psi$ which behaves like $d$ but it is $C^2$ inside $\Omega$, and will show that $|L(\psi^s)|\leq Cd^{\alpha-s}$; see Definition \ref{d-is-psi}.

Once we have this, and doing some extra computations we will be able to construct sub and supersolutions which are comparable to $d^s$, and thus we will have
\[|u|\leq Cd^s.\]
This, combined with interior regularity estimates, will give the $C^s$ estimate of Proposition \ref{prop-Cs}.

Then, combining these ingredients with a blow-up and compactness argument in the spirit of \cite{RS-stable,RS-K}, we will find the expansion
\[\bigl|u(x)-Q(z)d^s(x)\bigr|\leq C|x-z|^{s+\alpha}\]
at any $z\in \partial\Omega$.
And this will yield Theorem \ref{thm-bdry-reg}.

\subsection{$C^1$ domains}

In $C^1$ domains, in general one does not expect solutions to be comparable to $d^s$.
In that case, we establish the following.

\begin{thm}\label{thmC1}
Let $s\in(0,1)$ and $\alpha\in(0,s)$.
Let $L$ be any operator of the form \eqref{operator-L}-\eqref{ellipt-const}, and $\Omega$ be any $C^1$ domain.

Then, there exists is $\delta>0$, depending only on $\alpha$, $n$, $s$, $\Omega$, and ellipticity constants, such that the following statement holds.

Let $u_1$ and $u_2$, be viscosity solutions of \eqref{pb} with right hand sides $f_1$ and $f_2$, respectively.
Assume that $\|f_i\|_{L^\infty(B_1\cap \Omega)}\leq C_0$ (with $C_0\geq\delta$), $\|u_i\|_{L^\infty(\R^n)}\leq C_0$,
\[f_i\geq-\delta\quad\textrm{in}\quad B_1\cap \Omega,\]
and that
\[u_i\geq0\quad\mbox{in}\quad \R^n,\qquad \sup_{B_{1/2}}u_i\geq 1.\]
Then,
\[ \|u_1/u_2\|_{C^\alpha(\Omega\cap B_{1/2})} \le CC_0,\qquad \alpha\in(0,s),\]
where $C$ depends only on $\alpha$, $n$, $s$, $\Omega$, and ellipticity constants.
\end{thm}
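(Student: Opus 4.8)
The plan is to prove the result by an approximation scheme: a $C^1$ domain is, at every scale, trapped between two translated cones whose opening angle tends to $\pi$ as the scale shrinks. The key quantitative input is that for the operator $L$ in a cone-like region the boundary behaviour degenerates from $d^s$ only logarithmically, so by choosing $\delta$ small and working at a suitably small scale one gains a definite oscillation decay for $u_1/u_2$.

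First I would set up the flatness quantification: since $\partial\Omega$ is $C^1$ and $0\in\partial\Omega$, for every $\rho>0$ there is a scale at which $\Omega\cap B_\rho$ lies between $\{x\cdot e>-\eta(\rho)\rho\}$ and $\{x\cdot e>\eta(\rho)\rho\}$ with $\eta(\rho)\to0$. After rescaling, I reduce to studying solutions in a domain $\Omega$ that is $(\eta,1)$-flat near $0$ with $\eta$ as small as we wish. The next ingredient is a one-sided barrier: using the estimate $|L(d^s)|\le Cd^{\alpha-s}$ of Theorem \ref{thm-bdry-reg} type (more precisely, using the function $\psi$ from Definition \ref{d-is-psi}) together with an auxiliary function built from the half-space solution $(x\cdot e)_+^s$, I construct, for the flat domain, a supersolution comparable to $d^s$ up to a controlled constant and a subsolution of the same type; the flatness parameter $\eta$ and the smallness of $f_i$ (controlled by $\delta$) enter to absorb the error terms. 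This yields that each $u_i$ satisfies $c\,d^s\le u_i\le C\,d^s$ in $B_{1/4}$ — but crucially the constants $c,C$ degrade as $\eta\to0$ only in a mild, controlled way, essentially like an iterated logarithm of the number of scales.

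The heart of the argument is then an iteration. Assume inductively that in $B_{r_k}$ we have $a_k d^s\le u_1-\theta u_2\le b_k d^s$ type bounds, or more directly that $\mathrm{osc}_{B_{r_k}\cap\Omega}(u_1/u_2)\le M_k$. Pick a point, look at the two positive supersolutions $u_2$ and $u_1-(\inf u_1/u_2)u_2$ (which is again a nonnegative supersolution thanks to $f_i\ge-\delta$ and the choice of $\delta$), apply the interior Harnack inequality for $L$ in the interior portion of $\Omega\cap B_{r_k}$, and use the barrier bounds near the boundary to compare both functions to $d^s$. This gives $M_{k+1}\le (1-c_0)M_k$ for a universal $c_0>0$ provided $\eta$ (hence the working scale) was chosen small enough, which is possible uniformly because $\Omega$ is $C^1$. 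Summing the geometric decay over dyadic scales $r_k=2^{-k}$ produces the Hölder modulus $\|u_1/u_2\|_{C^\alpha(\Omega\cap B_{1/2})}\le CC_0$ for every $\alpha<s$, after also invoking interior $C^\alpha$ estimates away from $\partial\Omega$ and a covering argument to pass from the single boundary point $0$ to all of $B_{1/2}\cap\overline\Omega$. The normalization $\sup_{B_{1/2}}u_i\ge1$ and $\|u_i\|_{L^\infty}\le C_0$ are used to keep all comparisons on the same footing and to guarantee the denominator $u_2$ does not degenerate faster than $d^s$.

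The main obstacle I expect is the barrier construction in the merely-$C^1$ setting and the bookkeeping of how the constants depend on the flatness: unlike $C^{1,\alpha}$ domains, here there is no interior/exterior ball condition and no power-rate closeness to a half-space, so one cannot expect $u\sim d^s$ with scale-independent constants; the whole scheme works only because the loss per scale is summably small, and making this precise — choosing the sequence of scales, controlling the accumulated constant, and ensuring $c_0$ stays universal — is the delicate part. A secondary technical point is justifying all the comparison (maximum principle) steps at the level of viscosity solutions, including that $u_1-\theta u_2$ remains a viscosity supersolution with right-hand side $\ge-2\delta C_0$, which forces the smallness condition on $\delta$.
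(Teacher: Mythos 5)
Your proposal does not match the paper's method, and as written it contains a genuine gap that prevents it from reaching the claimed range of exponents.

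The central problem is the claim that, after flattening, each $u_i$ satisfies $c\,d^s\le u_i\le C\,d^s$ with constants degrading only logarithmically in the number of scales. In $C^1$ domains this comparability simply fails: the paper is explicit that solutions are in general \emph{not} comparable to $d^s$ when $\Omega$ is merely $C^1$, and the barriers it actually constructs (Lemma \ref{homog-subsol}) give only $c\,d^{s+\epsilon}\le u_i\le C\,d^{s-\epsilon}$, with $\epsilon>0$ arbitrarily small but not zero. Your iteration is built on a two-sided $d^s$ control that you cannot obtain, and a ``mild, iterated-logarithm'' loss is not something the $C^1$ hypothesis gives you; the correct quantitative statement is the $d^{s\pm\epsilon}$ sandwich, which is of power type, not logarithmic.

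The second gap is in the conclusion. An oscillation-decay iteration of the form $M_{k+1}\le(1-c_0)M_k$ with a universal $c_0$ produces H\"older regularity with a fixed small exponent $\gamma=-\log_2(1-c_0)$; it does not, and cannot, give $\|u_1/u_2\|_{C^\alpha}\le CC_0$ for \emph{every} $\alpha<s$. This is exactly the point the paper makes when contrasting its method with the classical boundary Harnack literature: the classical iteration (which is roughly what you describe) works in Lipschitz or worse domains but only yields a small, non-explicit exponent. To get the sharp range $\alpha\in(0,s)$ the paper instead uses a blow-up and compactness argument (Proposition \ref{bdryregC1doms} together with the doubling estimates of Lemma \ref{lemvarphi} and the $C^{\bar\alpha}$ estimate of Lemma \ref{C-alpha-est}): one rescales around a boundary point, uses $C^1$ regularity to make the rescaled domains converge to a half-space, passes to the limit, invokes the Liouville-type classification (solutions in a half-space are multiples of $(x\cdot e)_+^s$), and deduces an expansion $|u_1-Ku_2|\le C|x|^{s+\alpha}$. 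It is this expansion, not an iteration with a fixed decay rate, that yields the full range of $\alpha$. Your plan would need to be replaced by (or supplemented with) such a compactness/Liouville step to prove the theorem as stated.
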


We expect the range of exponents $\alpha\in(0,s)$ to be optimal.

In particular, the previous result yields a boundary Harnack principle in $C^1$ domains.

\begin{cor}\label{corC1}
Let $s\in(0,1)$, $L$ be any operator of the form \eqref{operator-L}-\eqref{ellipt-const}, and $\Omega$ be any $C^1$ domain.
Let $u_1$ and $u_2$, be viscosity solutions of
\[\left\{ \begin{array}{rcll}
L u_1=Lu_2 &=&0&\textrm{in }B_1\cap \Omega \\
u_1=u_2&=&0&\textrm{in }B_1\setminus\Omega,
\end{array}\right.\]
Assume that
\[u_1\geq0\qquad\textrm{and}\qquad u_2\geq0\qquad\mbox{in}\quad \R^n,\]
and that $\sup_{B_{1/2}}u_1=\sup_{B_{1/2}}u_2=1$.
Then,
\[ 0<C^{-1}\leq \frac{u_1}{u_2}\leq C\qquad\textrm{in}\quad B_{1/2},\]
where $C$ depends only on $n$, $s$, $\Omega$, and ellipticity constants.
\end{cor}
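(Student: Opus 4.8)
The plan is to derive Corollary \ref{corC1} directly from Theorem \ref{thmC1} by checking that the hypotheses are met and that the conclusion of the theorem specializes to the desired two-sided bound. First I would normalize: since $Lu_1=Lu_2=0$, the right-hand sides are $f_1\equiv f_2\equiv 0$, so for any $\delta>0$ we trivially have $f_i\geq -\delta$ in $B_1\cap\Omega$. Next I would fix the constant $C_0$. The functions $u_i$ are given with $\sup_{B_{1/2}}u_i=1$, which is $\geq 1$ as required; but we also need an $L^\infty(\R^n)$ bound. Here I would invoke the interior Harnack inequality for $L$ together with a standard covering/chaining argument (or the $L^\infty$ bound already contained in the machinery behind Proposition \ref{prop-Cs}), applied to the nonnegative solutions $u_i$, to conclude $\|u_i\|_{L^\infty(B_{3/4})}\leq C$ for a dimensional constant $C$ depending on the allowed data. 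Strictly speaking $u_i$ need only be controlled where Theorem \ref{thmC1} uses them, but one must be a little careful: $u_i$ is only assumed nonnegative in $\R^n$, not bounded; so one either assumes $u_i\in L^\infty(\R^n)$ as part of the definition of solution in \eqref{pb} (which the paper does, via $u\in L^\infty(\R^n)$), and then sets $C_0:=\max\{\delta,\|u_1\|_{L^\infty(\R^n)},\|u_2\|_{L^\infty(\R^n)},1\}$.

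With $C_0$ so chosen, all hypotheses of Theorem \ref{thmC1} hold for this $\delta$ (the $\delta$ furnished by the theorem for the given $\alpha$, say $\alpha=s/2$), so we obtain
\[
\|u_1/u_2\|_{C^\alpha(\Omega\cap B_{1/2})}\leq CC_0.
\]
In particular $u_1/u_2$ is a bounded function on $\Omega\cap B_{1/2}$, which gives the upper bound $u_1/u_2\leq C$ there. For the lower bound, I would simply swap the roles of $u_1$ and $u_2$: the hypotheses are symmetric in the two functions, so applying Theorem \ref{thmC1} to the pair $(u_2,u_1)$ yields $u_2/u_1\leq C$ on $\Omega\cap B_{1/2}$, i.e. $u_1/u_2\geq C^{-1}$. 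Combining, $C^{-1}\leq u_1/u_2\leq C$ in $\Omega\cap B_{1/2}$. Since outside $\Omega$ both functions vanish, the quotient is only meaningful in $\Omega\cap B_{1/2}$; the statement ``in $B_{1/2}$'' should be read accordingly (or one notes that the $C^\alpha$ extension of $u_1/u_2$ to $\overline\Omega\cap B_{1/2}$ inherits the same two-sided bounds by continuity). Finally I would absorb the factor $C_0$ into the constant: since $C_0$ depends only on $n,s,\Omega$ and ellipticity through the assumed normalization $\sup_{B_{1/2}}u_i=1$ plus the a priori $L^\infty$ bound, the resulting constant $C$ in the corollary depends only on $n,s,\Omega$ and ellipticity constants, as claimed.

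The only genuinely nontrivial point — the main obstacle — is producing the global $L^\infty(\R^n)$ bound on $u_i$ needed to feed Theorem \ref{thmC1}, since that theorem's hypotheses explicitly require $\|u_i\|_{L^\infty(\R^n)}\leq C_0$ with a quantitative $C_0$. If one only knows $\sup_{B_{1/2}}u_i=1$ and $u_i\geq 0$ in $\R^n$, there is in principle no control of $u_i$ at infinity, so either (a) one adds ``$u_i\in L^\infty(\R^n)$'' to the standing hypotheses (consistent with \eqref{pb}), and the corollary's constant then legitimately depends on that bound — but the paper's phrasing suggests instead (b) the normalization is meant together with an interior estimate so that $\sup_{B_{3/4}}u_i$ is controlled, and the relevant tail contributions in the definition of $L$ are handled as in the proof of Theorem \ref{thmC1} itself. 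In writing the proof I would state this reduction cleanly: assume $u_i$ bounded, set $C_0$ as above, invoke Theorem \ref{thmC1} twice (for $(u_1,u_2)$ and $(u_2,u_1)$), and conclude.
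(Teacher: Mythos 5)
Your proposal is correct and is essentially the paper's own (very terse) proof: the corollary is obtained by invoking Theorem \ref{thmC1} with $f_1\equiv f_2\equiv 0$, and the symmetric application to the pairs $(u_1,u_2)$ and $(u_2,u_1)$ is exactly how the two-sided bound $C^{-1}\leq u_1/u_2\leq C$ is obtained. Your remark about needing an a priori $L^\infty(\R^n)$ bound on $u_i$ to feed $C_0$ into Theorem \ref{thmC1} is a fair observation; the paper's standing convention $u\in L^\infty(\R^n)$ for solutions of \eqref{pb} is implicitly in force, and with the normalization $\sup_{B_{1/2}}u_i=1$ the tail of a nonnegative $L$-harmonic function is controlled via the nonlocal Harnack-type estimates, so the dependence on $C_0$ can be absorbed into the final constant as you indicate.
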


Theorems \ref{thmC1} and \ref{thm-bdry-reg} will be important tools in a forthcoming work on free boundary problems for nonlocal elliptic operators \cite{CRS-obstacle}.
Namely, Theorem \ref{thmC1} (applied to the derivatives of the solution to the free boundary problem) will yield that $C^1$ free boundaries are in fact $C^{1,\alpha}$, and then thanks to Theorem \ref{thm-bdry-reg} we will get a fine description of solutions in terms of $d^s$.

\subsection{Equations with bounded measurable coefficients}

We also obtain estimates for equations with bounded measurable coefficients,
\begin{equation}\label{pb-2}
\left\{ \begin{array}{rcll}
M^+u &\geq&-K_0&\textrm{in }B_1\cap \Omega \\
M^-u &\leq&K_0&\textrm{in }B_1\cap \Omega \\
u&=&0&\textrm{in }B_1\setminus\Omega.
\end{array}\right.\end{equation}
Here, $M^+$ and $M^-$ are the extremal operators associated to the class $\mathcal L_*$, consisting of all operators of the form \eqref{operator-L}-\eqref{ellipt-const}, i.e.,
\[M^+:=M^+_{\mathcal L_*}u=\sup_{L\in\mathcal L_*}Lu,\qquad M^-:=M^+_{\mathcal L_*}u=\inf_{L\in\mathcal L_*}Lu.\]
Notice that the equation \eqref{pb-2} is an equation with bounded measurable coefficients, and it is the nonlocal analogue of
\[a_{ij}(x)\partial_{ij}u=f(x),\qquad \textrm{with}\quad \lambda{\rm Id}\leq (a_{ij}(x))_{ij}\leq \Lambda{\rm Id},\qquad |f(x)|\leq K_0.\]
For nonlocal equations with bounded measurable coefficients in $C^{1,\alpha}$ domains, we show the following.

Here, and throughout the paper, we denote $\bar\alpha=\bar\alpha(n,s,\lambda,\Lambda)>0$ the exponent in \cite[Proposition 5.1]{RS-K}.

\begin{thm}\label{thm-bdry-reg-2}
Let $s\in(0,1)$ and $\alpha\in(0,\bar\alpha)$.
Let $\Omega$ be any $C^{1,\alpha}$ domain, and $d$ be the distance to $\partial\Omega$.
Let $u\in C(B_1)$ be any viscosity solution of \eqref{pb-2}.
Then, we have
\[\|u/d^s\|_{C^{\alpha}(B_{1/2}\cap\overline\Omega)}\leq C\left(K_0+\|u\|_{L^\infty(\R^n)}\right).\]
The constant $C$ depends only on $n$, $s$, $\alpha$, $\Omega$, and ellipticity constants.
\end{thm}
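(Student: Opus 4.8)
The plan is to deduce Theorem \ref{thm-bdry-reg-2} from Theorem \ref{thm-bdry-reg} by an approximation/perturbation argument combined with the compactness and blow-up techniques already used for the non-divergence interior theory in \cite{RS-K}. The key point is that a viscosity solution of the extremal inequalities \eqref{pb-2} is, locally and up to small errors, well approximated by solutions of linear equations $Lu=f$ with $L\in\mathcal L_*$ and $\|f\|_{L^\infty}\le K_0$, for which the conclusion of Theorem \ref{thm-bdry-reg} already applies. Since $\bar\alpha$ is chosen so that the interior H\"older estimate \cite[Proposition 5.1]{RS-K} holds for the class $\mathcal L_*$, we will have at our disposal both an interior $C^{\bar\alpha}$ estimate and, by the same proof that establishes Proposition \ref{prop-Cs}, a bound $|u|\le Cd^s$ for solutions of \eqref{pb-2} (the barriers constructed in Section on $C^{1,\alpha}$ barriers are sub/supersolutions for \emph{both} $M^+$ and $M^-$, so they still sandwich $u$).

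The main steps would be as follows. First I would reduce to the normalized setting $K_0+\|u\|_{L^\infty(\R^n)}\le 1$, and recall the two ingredients: (a) the boundary size estimate $\|u\|_{L^\infty(B_r(z))}\le Cr^s$ for every $z\in\partial\Omega\cap B_{1/2}$, obtained from the $d^s$-barriers; and (b) the interior estimate that on $B_{d(x)/2}(x)$ the oscillation of $u$ is controlled by $d(x)^{\bar\alpha}$ times a rescaled norm, via \cite[Proposition 5.1]{RS-K}. Second, at each boundary point $z$ I would run a blow-up and compactness argument in the spirit of \cite{RS-stable,RS-K,RS-stable}: assuming the estimate fails, one extracts a sequence of solutions $u_k$, boundary points $z_k$, and scales $r_k\to 0$ for which the normalized quantities
\[
v_k(x)=\frac{u_k(z_k+r_k x)-Q_k\, d_k^s(z_k+r_k x)}{r_k^{s+\alpha}}
\]
(with $Q_k$ chosen to kill the first-order term, as in the proof of Theorem \ref{thm-bdry-reg}) have unit-size oscillation on $B_1$ but, by (a)+(b) and the Liouville-type growth control, satisfy a growth bound $\|v_k\|_{L^\infty(B_R)}\le CR^{s+\alpha}$ for all $R$. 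The equation \eqref{pb-2} rescales to the same form with $K_0$ replaced by $r_k^{2s-s-\alpha}K_0\to 0$ since $\alpha<s$, so in the limit $v_k\to v$ where $v$ solves $M^+v\ge0\ge M^-v$ in a half-space $\{x_n>0\}$, vanishes in $\{x_n\le0\}$, has subquadratic (indeed $(s+\alpha)$-power, with $s+\alpha<2s<2$) growth, and is orthogonal — through the normalization of $Q_k$ — to $(x_n)_+^s$. A Liouville theorem for the extremal operators in the half-space (as in \cite{RS-K}, using that $(x_n)_+^s$ is the only $s$-homogeneous solution and classifying solutions of growth less than $s+\alpha<1+\text{something}$) forces $v\equiv0$, contradicting the unit oscillation. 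Translating the contradiction back gives the pointwise expansion $|u(x)-Q(z)d^s(x)|\le C|x-z|^{s+\alpha}$ at every $z\in\partial\Omega\cap B_{1/2}$, and a now-standard argument (comparing the expansions at two nearby boundary points and using the interior estimate for the error) upgrades this to $\|u/d^s\|_{C^\alpha(B_{1/2}\cap\overline\Omega)}\le C$.

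The step I expect to be the main obstacle is making the passage to the limit and the Liouville classification work \emph{simultaneously} with only a one-sided (viscosity) notion of solution and only measurable coefficients. Two technical difficulties arise: first, stability of viscosity solutions of $M^+v\ge0\ge M^-v$ under the blow-up limit requires care when $r_k\to0$ because the nonlocal term sees the behaviour of $v_k$ on all of $\R^n$, so one must use the growth bound $\|v_k\|_{L^\infty(B_R)}\le CR^{s+\alpha}$ together with $s+\alpha<2s$ to guarantee the tail integrals converge and the limiting operator still makes sense; second, the Liouville theorem in the half-space for the \emph{class} $\mathcal L_*$ (rather than a single linear $L$) must be invoked in the strong form from \cite{RS-K} that classifies all solutions vanishing outside $\{x_n>0\}$ with growth strictly below the relevant threshold — here the constraint $\alpha<\bar\alpha$ is exactly what makes both the interior $C^{\bar\alpha}$ regularity and this Liouville statement available. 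Everything else — the barrier construction giving $|u|\le Cd^s$, the choice of $Q_k$, and the final iteration from the pointwise expansion to the $C^\alpha$ norm of $u/d^s$ — parallels the $C^{1,\alpha}$-domain proof of Theorem \ref{thm-bdry-reg} almost verbatim, with $\mathcal L_*$ in place of a fixed operator $L$.
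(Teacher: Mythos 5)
Your proposal matches the paper's proof in substance: the paper also argues directly by contradiction/blow-up for the extremal inequalities \eqref{pb-2} (Proposition \ref{prop-bdry-reg-bdd}), using the barriers of Section \ref{sec-barriers} to get $|u|\le Cd^s$, the boundary $C^{\bar\alpha}$ estimate (Proposition \ref{prop-Cs-2-bdd}) for compactness, the Liouville classification \cite[Proposition 5.1]{RS-K} for the half-space limit, and the orthogonality built into the choice of $Q$ to force the limit to vanish; the step from the pointwise expansion to the $C^\alpha$ bound on $u/d^s$ is then identical to Theorem \ref{thm-bdry-reg}. One remark: your opening paragraph frames the plan as ``approximating a solution of \eqref{pb-2} by solutions of linear equations $Lu=f$ and applying Theorem \ref{thm-bdry-reg}'', but this is never actually carried out in your main steps (nor in the paper) — the blow-up is performed directly on the extremal inequalities, with the only ``linearization'' appearing in the barriers and in the Liouville statement for the class $\mathcal L_*$. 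Also, the blow-up quotient should carry the paper's $\theta(r_k)$-normalization (and use $\psi_k$ rather than $d_k$) to make the nondegeneracy and the growth bound hold simultaneously; as written your $v_k$ omits this, though your appeal to ``as in the proof of Theorem \ref{thm-bdry-reg}'' covers it.
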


In $C^1$ domains we prove:

\begin{thm}\label{thmC1-2}
Let $s\in(0,1)$ and $\alpha\in(0,\bar\alpha)$.
Let $\Omega$ be any $C^1$ domain.

Then, there exists is $\delta>0$, depending only on $\alpha$, $n$, $s$, $\Omega$, and ellipticity constants, such that the following statement holds.

Let $u_1$ and $u_2$, be functions satisfying
\[\left\{ \begin{array}{rcll}
M^+(au_1+bu_2) &\geq&-\delta(|a|+|b|)&\textrm{in }B_1\cap \Omega \\
u_1=u_2&=&0&\textrm{in }B_1\setminus\Omega,
\end{array}\right.\]
for any $a,b\in\R$.
Assume that
\[u_i\geq0\quad\mbox{in}\quad \R^n,\]
$\|u_i\|_{L^\infty(\R^n)}\leq C_0$, and that $\sup_{B_{1/2}}u_i\geq 1$.
Then, we have
\[ \|u_1/u_2\|_{C^\alpha(\Omega\cap B_{1/2})} \le C,\]
where $C$ depends only on $\alpha$, $n$, $s$, $\Omega$, and ellipticity constants.
\end{thm}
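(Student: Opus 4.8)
The plan is to reduce this statement to Theorem \ref{thmC1} by the same device that is used throughout the paper to handle equations with bounded measurable coefficients. The key observation is that the hypothesis $M^+(au_1+bu_2)\geq -\delta(|a|+|b|)$ for all $a,b\in\mathbb R$ is designed precisely so that every linear combination $v=au_1+bu_2$ behaves like a subsolution for some operator in the class $\mathcal L_*$: for each fixed $x$ there is $L_{x}\in\mathcal L_*$ (depending measurably on $x$) with $L_x v(x)\geq M^+v(x) - \varepsilon \geq -\delta(|a|+|b|)-\varepsilon$, and analogously $M^-v=-M^+(-v)$ gives the matching upper bound $L_x v(x)\leq \delta(|a|+|b|)$. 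Thus, up to relabeling the right-hand side and the ellipticity constants, $v$ is a viscosity solution of a problem of the form \eqref{pb} with $\|f\|_{L^\infty}\leq \delta(|a|+|b|)$ and $f\geq -\delta(|a|+|b|)$. First I would make this reduction precise for the two combinations $v_1=u_1$ and $v_2=u_2$ themselves (taking $(a,b)=(1,0)$ and $(0,1)$), and check that $u_1,u_2$ satisfy all the hypotheses of Theorem \ref{thmC1} with $C_0$ replaced by $\max(C_0,\delta)$ and with the smallness parameter $\delta$ of Theorem \ref{thmC1} at our disposal (we simply take the $\delta$ in the present statement no larger than the one produced by Theorem \ref{thmC1}). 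This immediately yields $\|u_1/u_2\|_{C^\alpha(\Omega\cap B_{1/2})}\leq C$.

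The only genuine subtlety is the range of exponents. Theorem \ref{thmC1} is stated for operators with a fixed homogeneous kernel and gives H\"older exponents up to $s$; but once we pass to the bounded–measurable–coefficient setting, the natural interior regularity available is only the $C^{\bar\alpha}$ estimate of \cite[Proposition 5.1]{RS-K}, and correspondingly the boundary Harnack exponent cannot exceed $\bar\alpha$. So the second step is to verify that the blow-up and compactness scheme behind Theorem \ref{thmC1} goes through verbatim when, at each occurrence, the interior $C^{s}$-type estimates for a single operator are replaced by the interior $C^{\bar\alpha}$ estimate for the extremal operators $M^\pm$, and the Liouville-type classification step is replaced by the corresponding one in the $\mathcal L_*$ class. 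This is the same substitution that turns Theorem \ref{thm-bdry-reg} into Theorem \ref{thm-bdry-reg-2}, so I would phrase the argument so as to reuse those pieces rather than redo them.

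Concretely, the key steps in order are: (1) the reduction above, showing each $u_i$ — and more generally each $au_1+bu_2$ — is a solution in the sense needed for the machinery, with controlled right-hand side and the required sign condition $f\geq -\delta(|a|+|b|)$; (2) the supersolution/barrier construction in $C^1$ domains, identical to that used for Theorem \ref{thmC1}, which does not see the difference between a fixed operator and the extremal operators since it only uses one-sided inequalities for $M^\pm$ applied to explicit barriers; (3) the expansion/compactness argument yielding, at each boundary point $z$, an estimate of the form $|u_i(x)-c_i(z)\,\Psi(x)|\leq C|x-z|^{s+\alpha}$ for a suitable common profile, now with $\alpha<\bar\alpha$ forced by the interior estimate; and (4) combining the two expansions to control $u_1/u_2$, exactly as in the conclusion of Theorem \ref{thmC1}. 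I expect the main obstacle to be purely bookkeeping: making sure that the "for all $a,b$" hypothesis is invoked at the right moment in the compactness argument — the blow-up limits are again linear combinations of blow-ups of $u_1$ and $u_2$, and one needs the joint hypothesis, not just the individual equations for $u_1$ and $u_2$, to close the Liouville step. Once that is in place the proof is a transcription of the proof of Theorem \ref{thmC1} with $\mathcal L_*$-extremal operators in place of a single $L$ and $\bar\alpha$ in place of $s$ as the ceiling for the exponent.
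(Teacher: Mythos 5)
Your proposal correctly identifies the skeleton—barrier construction, blow-up and compactness, and a Liouville-type classification, with $\bar\alpha$ replacing $s$ and \cite[Proposition 5.1]{RS-K} replacing \cite[Theorem 4.1]{RS-stable}—but two points deserve attention, one a misdirection and one a genuine gap.

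First, the opening reduction ``this immediately yields $\|u_1/u_2\|_{C^\alpha}\le C$ via Theorem \ref{thmC1}'' does not work: Theorem \ref{thmC1} assumes $L u_i=f_i$ for a \emph{fixed} operator $L\in\mathcal L_*$, whereas the extremal inequalities $M^+u_i\ge -\delta$, $M^-u_i\le \delta$ encode genuinely $x$-dependent (bounded measurable) coefficients and do not produce a single $L$. You notice this in the next paragraph, and in fact the paper goes the other way: it proves Theorem \ref{thmC1-2} directly and then obtains Theorem \ref{thmC1} in one line by swapping the Liouville theorem and replacing $\bar\alpha$ by $s$.

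Second, and more substantively, your step (3) asserts an expansion $|u_i(x)-c_i(z)\Psi(x)|\le C|x-z|^{s+\alpha}$ against ``a suitable common profile $\Psi$.'' In $C^1$ domains there is no such profile: the barriers of Section \ref{sec-barriers-C1} only give $c\,d^{s+\epsilon}\le u_i\le C\,d^{s-\epsilon}$, and no function $\Psi$ (in particular not $d^s$, nor $\psi^s$) is simultaneously comparable to both solutions with a $d^{s+\alpha}$-type error. This is exactly the point where the $C^1$ argument departs from the $C^{1,\alpha}$ one. The mechanism that replaces the common profile is Proposition \ref{bdryregC1doms}: one expands $u_1$ \emph{directly in terms of $u_2$} (after normalizing $\sup_{B_1}u_2=1$), proving $|u_1(x)-K\,u_2(x)|\le C|x|^{s+\alpha}$, with $u_2$ itself playing the role of $\Psi$. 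The compactness needs the ``for all $a,b$'' hypothesis precisely because the blow-up sequence involves the difference $u_1-K_r u_2$, and its nondegeneracy/doubling control is obtained from the doubling estimate for $\varphi=u_2$ (Lemma \ref{lemvarphi}) rather than from any fixed barrier. You did flag that the joint hypothesis is needed for the Liouville step, which is the right instinct, but without the realization that $u_2$ must serve as the profile, the expansion in step (3) as you wrote it cannot be proved and the argument would stall.
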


The Boundary Harnack principle for nonlocal operators has been widely studied, and in some cases it is even known in general open sets; see Bogdan \cite{Bogdan1}, Song-Wu \cite{SW}, Bogdan-Kulczycki-Kwasnicki \cite{Bogdan2}, and Bogdan-Kumagai-Kwasnicki \cite{Bogdan3}.
The main differences between our Theorems \ref{thmC1}-\ref{thmC1-2} and previous known results are the following.
On the one hand, our results allow a right hand side on the equation \eqref{pb}, and apply also to viscosity solutions of equations with bounded measurable coefficients \eqref{pb-2}.
On the other hand, we obtain a higher order estimate, in the sense that for linear equations we prove that $u_1/u_2$ is $C^\alpha$ for all $\alpha\in(0,s)$.
Finally, the proof we present here is perturbative, in the sense the we make a blow-up and use that after the rescaling the domain will be a half-space.
This allows us to get a higher order estimate for $u_1/u_2$, but requires the domain to be at least $C^1$.

\vspace{4mm}

The paper is organized as follows.
In Section \ref{sec-barriers} we construct the barriers in $C^{1,\alpha}$ domains.
Then, in Section \ref{sec-proof-main} we prove the regularity of solutions in $C^{1,\alpha}$ domains, that is, Proposition \ref{prop-Cs} and Theorems \ref{thm-bdry-reg} and \ref{thm-bdry-reg-2}.
In Section \ref{sec-barriers-C1} we construct the barriers needed in the analysis on $C^1$ domains.
Finally, in Section \ref{sec-regularity-C1} we prove Theorems \ref{thmC1} and \ref{thmC1-2}.

\section{Barriers: $C^{1,\alpha}$ domains}
\label{sec-barriers}

Throughout this section, $\Omega$ will be any bounded and $C^{1,\alpha}$ domain, and
\[d(x)={\rm dist}(x,\R^n\setminus\Omega).\]
Since $d$ is only $C^{1,\alpha}$ inside $\Omega$, we need to consider the following ``regularized version'' of $d$.

\begin{defi}\label{d-is-psi}
Given a $C^{1,\alpha}$ domain $\Omega$, we consider a fixed function $\psi$ satisfying
\begin{equation}\label{psi-d}
C^{-1}d\leq \psi\leq Cd,
\end{equation}
\begin{equation}\label{psi-d-2}
\|\psi\|_{C^{1,\alpha}(\overline\Omega)}\leq C\qquad\textrm{and}\qquad |D^2\psi|\leq Cd^{\alpha-1},
\end{equation}
with $C$ depending only on $\Omega$.
\end{defi}

\begin{rem}\label{rem-psi}
Notice that to construct $\psi$ one may take for example the solution to $-\Delta \psi=1$ in $\Omega$, $\psi=0$ on $\partial\Omega$, extended by $\psi=0$ in $\R^n\setminus\Omega$.

Note also that any $C^{1,\alpha}$ domain $\Omega$ can be locally represented as the epigraph of a $C^{1,\alpha}$ function.
More precisely, there is a $\rho_0>0$ such that for all $z\in\partial\Omega$ the set $\partial\Omega\cap B_{\rho_0}(z)$ is, after a rotation, the graph of a $C^{1,\alpha}$ function.
Then, the constant $C$ in \eqref{psi-d}-\eqref{psi-d-2} can be taken depending only on $\rho_0$ and on the $C^{1,\alpha}$ norms of these functions.
\end{rem}

We want to show the following.

\begin{prop}\label{lem11}
Let $s\in(0,1)$ and $\alpha\in(0,s)$, $L$ be given by \eqref{operator-L}-\eqref{ellipt-const}, and $\Omega$ be any $C^{1,\alpha}$ domain.
Let $\psi$ be given by Definition \ref{d-is-psi}.
Then,
\begin{equation}\label{eq-psi-1}
|L(\psi^s)| \leq Cd^{\alpha-s}\qquad \textrm{in}\ \Omega.
\end{equation}
The constant $C$ depends only on $s$, $n$, $\Omega$, and ellipticity constants.
\end{prop}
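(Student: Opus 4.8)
The plan is to reduce the global estimate to a local one near a boundary point, flatten the boundary, and split the nonlocal integral into a ``near'' and ``far'' part, controlling each by a careful Taylor expansion that exploits the $C^2$ regularity of $\psi$ inside $\Omega$ together with the bounds in \eqref{psi-d}--\eqref{psi-d-2}.

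\begin{proof}[Proof sketch]
Fix a point $x_0\in\Omega$ and set $\rho=d(x_0)$. If $\rho$ is bounded below (say $x_0$ is far from $\partial\Omega$), then $\psi^s$ is smooth and bounded in a fixed neighbourhood of $x_0$ with bounded $C^2$ norm, so $|L(\psi^s)(x_0)|\le C$, which is stronger than \eqref{eq-psi-1}; hence we may assume $\rho$ is small. After a translation and rotation we place the nearest boundary point at the origin with interior normal $e_n$, so that $x_0=\rho e_n+o(\rho)$ and, because $\Omega$ is $C^{1,\alpha}$, in $B_{\rho_0}$ the domain is the epigraph of a $C^{1,\alpha}$ function $\gamma$ with $\gamma(0)=0$, $\nabla\gamma(0)=0$, $[\nabla\gamma]_{C^\alpha}\le C$.

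Now split $L(\psi^s)(x_0)=I_1+I_2$ where $I_1$ integrates over $|y|<\rho/2$ and $I_2$ over $|y|\ge\rho/2$. For $I_1$: on the ball $B_{\rho/2}(x_0)$ we have $d\sim\rho$ and $\psi\sim\rho$, so by \eqref{psi-d-2}, $|D^2(\psi^s)|\le C\bigl(\psi^{s-2}|\nabla\psi|^2+\psi^{s-1}|D^2\psi|\bigr)\le C\rho^{s-2}$ there; a second-order Taylor expansion of the symmetric difference $\tfrac12(\psi^s(x_0+y)+\psi^s(x_0-y))-\psi^s(x_0)$ gives a bound $C\rho^{s-2}|y|^2$, and integrating against $|y|^{-n-2s}$ over $|y|<\rho/2$ yields $|I_1|\le C\rho^{s-2}\cdot\rho^{2-2s}=C\rho^{s-2s}=C\rho^{-s}$. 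This is \emph{not} good enough — we need $\rho^{\alpha-s}$ — so the expansion must be refined: the leading second-order term $\tfrac12 D^2(\psi^s)(x_0)y\cdot y$ integrated against the even kernel does \emph{not} vanish in general, but one compares $\psi$ with the affine function $\ell(x)=(x-0)\cdot e_n$ (the distance in the flattened picture); since $\psi$ agrees with an affine function to order $d^{1+\alpha}$ by the $C^{1,\alpha}$ bound, and $\ell^s$ has an explicit $L$ that is $O(\rho^{-s})$ with a sign, the \emph{difference} $\psi^s-\ell^s$ has better Hessian decay, giving the gain $\rho^\alpha$. Concretely, writing $\psi^s=\ell^s+(\psi^s-\ell^s)$ and using $|\psi-\ell|\le Cd^{1+\alpha}$, $|\nabla(\psi-\ell)|\le Cd^\alpha$, $|D^2(\psi-\ell)|\le Cd^{\alpha-1}$ on $B_{\rho/2}(x_0)$ produces $|D^2(\psi^s-\ell^s)|\le C\rho^{\alpha-2+s}$, and then $|L(\psi^s-\ell^s)(x_0)|\le C\rho^{\alpha+s-2}\rho^{2-2s}=C\rho^{\alpha-s}$, while $L(\ell^s)(x_0)$ is handled by the explicit one-dimensional computation (it is, up to a constant depending on $a$ and $s$, zero for the half-space kernel, or at worst $O(\rho^{-s})$ which after the same comparison absorbs into lower order — here one uses that $\ell^s$ solves the relevant one-dimensional problem for the fractional Laplacian and the ellipticity bounds \eqref{ellipt-const} to control general $a$).

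For $I_2$ (the tail, $|y|\ge\rho/2$): here one cannot Taylor expand, but $\psi^s$ is globally $C^s$ (indeed $0\le\psi^s\le C$ and $\psi^s\in C^s$ because $\psi\in C^{1,\alpha}\subset C^{0,1}$ near $\partial\Omega$ so $\psi^s\in C^{0,s}$), and more precisely $|\psi^s(x_0+y)+\psi^s(x_0-y)-2\psi^s(x_0)|\le C\min(|y|^s,\,\rho^{s-1}|y|)$ using the $C^{0,s}$ bound for large $|y|$ and the Lipschitz-in-the-$C^{0,1}$-sense bound near $x_0$; integrating against $|y|^{-n-2s}$ over $\{|y|\ge\rho/2\}$ gives $|I_2|\le C\rho^{-s}$. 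To upgrade to $\rho^{\alpha-s}$ one again subtracts $\ell^s$: the key point is that $\psi$ and $\ell$ (extended by the graph flattening) differ by $O(d^{1+\alpha})$ up to where the flattening is valid and both are $O(1)$-comparable beyond, so $L(\ell^s)(x_0)$ is computed explicitly and $L(\psi^s-\ell^s)(x_0)$ has the improved tail bound $|(\psi^s-\ell^s)(x_0\pm y)|\le C\min(\rho^{\alpha+s-1}|y|,\ \rho^{\alpha+s}+|y|^{s})$, again integrating to $C\rho^{\alpha-s}$. I expect the main obstacle to be precisely this gain of the exponent $\alpha$: the naive estimates only give $\rho^{-s}$, and squeezing out the extra $\rho^\alpha$ forces the comparison with the affine/half-space profile $\ell$, the explicit evaluation of $L(\ell^s)$ for a general angular density $a(\theta)$ (where one must use \eqref{ellipt-const} and a symmetrization in $\theta$ rather than an exact formula), and a bookkeeping of the $C^{1,\alpha}$ error between $\psi$, $d$, and $\ell$ uniformly in the region $|y|\lesssim\rho_0$. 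Summing $|I_1|+|I_2|\le C\rho^{\alpha-s}=Cd(x_0)^{\alpha-s}$ over all $x_0\in\Omega$ gives \eqref{eq-psi-1}.
\end{proof}
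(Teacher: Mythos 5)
Your outline shares the same skeleton as the paper's argument: compare $\psi^s$ against $\ell^s$ for a truncated affine function $\ell$, harvest the $\alpha$ gain from the $C^{1,\alpha}$ agreement of $\psi$ with its tangent, and split the integral at scale $\rho$. However, there are two genuine gaps. First, your treatment of $L(\ell^s)(x_0)$ is not correct as stated: you hedge that it is ``at worst $O(\rho^{-s})$ which after the same comparison absorbs into lower order.'' It would \emph{not} absorb — a nonzero $O(\rho^{-s})$ term dominates the desired $O(\rho^{\alpha-s})$ bound and would destroy the estimate. The argument requires the exact identity $L(\ell^s)=0$ on $\{\ell>0\}$, which holds for \emph{every} operator of the form \eqref{operator-L}--\eqref{ellipt-const}, not just the fractional Laplacian: since the kernel is symmetrized, $Lu$ applied to any function of the form $v(e\cdot x)$ reduces to a positive multiple of the one–dimensional fractional Laplacian of $v$, and $(-\Delta)^s_{\mathbb R}(t_+^s)=0$ for $t>0$ (this is the fact from \cite[Section 2]{RS-K} that the paper invokes). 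Related to this, $\ell$ must be taken tangent to $\psi$ at $x_0$ (i.e.\ $\ell(x)=(\psi(x_0)+\nabla\psi(x_0)\cdot(x-x_0))_+$), not the flat distance $e_n\cdot x$; since $\psi$ is only \emph{comparable} to $d$, the flat distance need not match $\psi$ to order $d^{1+\alpha}$.

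Second, your tail estimate for the region $\rho/2\le|y|\le 1$, namely $|(\psi^s-\ell^s)(x_0\pm y)|\le C\min(\rho^{\alpha+s-1}|y|,\,\rho^{\alpha+s}+|y|^s)$, is not valid: the gradient of $\psi^s-\ell^s$ is not globally $O(\rho^{\alpha+s-1})$, and the difference can be far larger at points $x_0+y$ where $d(x_0+y)$ is small. The correct pointwise bound — obtained from $|a^s-b^s|\le C|a-b|(a^{s-1}+b^{s-1})$ together with $|\psi-\ell|\le C|y|^{1+\alpha}$ — is
\[
|\psi^s-\ell^s|(x_0+y)\le C|y|^{1+\alpha}\bigl(d^{s-1}(x_0+y)+\ell^{s-1}(x_0+y)\bigr),
\]
and one then needs a separate lemma (the paper's Lemma~\ref{lem15}, a coarea/level-set estimate on $\int_{B_1\setminus B_{\rho/2}}d^{\gamma}(x_0+y)|y|^{-n-\beta}\,dy$) to control the resulting integral when $\gamma=s-1<0$. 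Without accounting for the singularity of $d^{s-1}$ near $\partial\Omega$ inside the annulus, the far-field contribution is not bounded.
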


For this, we need a couple of technical Lemmas.
The first one reads as follows.

\begin{lem}\label{lem13}
Let $\Omega$ be any $C^{1,\alpha}$ domain, and $\psi$ be given by Definition \ref{d-is-psi}.
Then, for each $x_0\in \Omega$ we have
\[\left|\psi(x_0+y)-\bigl(\psi(x_0)+\nabla \psi(x_0)\cdot y\bigr)_+\right|\leq C|y|^{1+\alpha}\qquad \textrm{for}\ y\in \R^n.\]
The constant $C$ depends only on $\Omega$.
\end{lem}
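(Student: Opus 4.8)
The plan is to reduce the estimate to a one-dimensional statement about the $C^{1,\alpha}$ function defining $\partial\Omega$ near the projection of $x_0$. First I would fix $x_0\in\Omega$ and let $z_0\in\partial\Omega$ be a nearest boundary point, so that $d(x_0)=|x_0-z_0|$ and $\nabla d(z_0)$ (equivalently $\nabla\psi(z_0)/|\nabla\psi(z_0)|$) is the inner unit normal $\nu$. After a rotation we may assume $\nu=e_n$, and by Remark~\ref{rem-psi} the boundary $\partial\Omega\cap B_{\rho_0}(z_0)$ is the graph $\{x_n=\gamma(x')\}$ of a $C^{1,\alpha}$ function $\gamma$ with $\gamma(z_0')=0$, $\nabla\gamma(z_0')=0$, and $[\nabla\gamma]_{C^\alpha}\le C$; hence $|\gamma(x')|\le C|x'-z_0'|^{1+\alpha}$ on that ball. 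The affine function $\ell(x):=\psi(x_0)+\nabla\psi(x_0)\cdot(x-x_0)$ is, up to the harmless multiplicative constant $|\nabla\psi(x_0)|\in[C^{-1},C]$ coming from \eqref{psi-d}--\eqref{psi-d-2}, the signed distance to the tangent hyperplane of $\partial\Omega$ at $z_0$; here one uses that $\nabla\psi(x_0)$ is within $Cd(x_0)^\alpha$ of $|\nabla\psi(x_0)|\,e_n$ because $\psi\in C^{1,\alpha}$ and $\nabla\psi(z_0)\parallel e_n$.

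Next I would split $\R^n$ into near and far regions. For $|y|$ large, say $|y|\ge \rho_0/2$ (or $|y|\ge c$ for a fixed small $c$), both $\psi(x_0+y)$ and $(\ell(x_0+y))_+$ are bounded by $C(1+|y|)$ thanks to $\|\psi\|_{C^{1,\alpha}(\overline\Omega)}\le C$ and $\psi\equiv 0$ outside $\Omega$, so the difference is trivially $\le C|y|^{1+\alpha}$ once $|y|\gtrsim 1$ (even $\le C|y|$ suffices and is absorbed). So the real content is $|y|\le \rho_0/2$. There I would compare $\psi(x_0+y)$ first with $d(x_0+y)$ (up to the constant in \eqref{psi-d}, and using a more careful comparison to get the right power — see below), and then compare $d(x_0+y)$ with the truncated affine function. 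Writing $x:=x_0+y=(x',x_n)$, when $x$ is near $z_0$ and inside $\Omega$ its distance to $\partial\Omega$ differs from the vertical height $x_n-\gamma(x')$ by $O(|\nabla\gamma|^2)\,d$, i.e. by $O(|x-z_0|^{2\alpha})d$, and $x_n-\gamma(x')$ in turn differs from the value of the tangent-plane-distance $\langle x-z_0,e_n\rangle = x_n$ by $|\gamma(x')|\le C|x'-z_0'|^{1+\alpha}\le C|y|^{1+\alpha}$ (plus $C d(x_0)^{1+\alpha}$, absorbed since $d(x_0)\le|y|$ is the relevant regime, and when $d(x_0)\ge|y|$ the point $x$ is well inside $\Omega$ and one argues directly). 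Finally $x_n = \langle x-z_0,e_n\rangle$ versus $\langle x-z_0,\nabla\psi(x_0)/|\nabla\psi(x_0)|\rangle$ differs by $C d(x_0)^\alpha|x-z_0|\le C|y|^{1+\alpha}$ in the regime $|y|\ge d(x_0)$; and $\ell(x) = |\nabla\psi(x_0)|\langle x-z_0,\nabla\psi(x_0)/|\nabla\psi(x_0)|\rangle + (\psi(x_0)-|\nabla\psi(x_0)|d(x_0))$, where the last parenthesis is $O(d(x_0)^{1+\alpha})$ since $\psi-|\nabla\psi|d$ vanishes to order $1+\alpha$ at $z_0$. Taking positive parts is $1$-Lipschitz, so all these errors survive the $(\,\cdot\,)_+$, and for points $x\notin\Omega$ one has $\psi(x)=0$ while $(\ell(x))_+$ is controlled by the same height comparison, giving the bound there too. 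Chaining the inequalities yields the claim on $|y|\le\rho_0/2$.

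The main obstacle I expect is getting the \emph{sharp} exponent $1+\alpha$ rather than $1+2\alpha$ or worse in the step relating $\psi$ (or $d$) to the truncated affine function: one must be careful that the ``curvature-type'' error $O(|\nabla\gamma|^2 d)$ is actually controlled not by $d(x_0+y)$ alone but gains the correct power, and that the split between the regimes $|y|\ge d(x_0)$ and $|y|\le d(x_0)$ is handled consistently — in the latter regime $x_0+y$ stays at distance comparable to $d(x_0)$ from $\partial\Omega$, so both $\psi(x_0+y)$ and $(\ell(x_0+y))_+$ are smooth there and one estimates the difference by a second-order Taylor expansion using $|D^2\psi|\le Cd^{\alpha-1}\le C d(x_0)^{\alpha-1}$, which gives $\tfrac12|D^2\psi||y|^2\le C d(x_0)^{\alpha-1}|y|^2\le C|y|^{1+\alpha}$, matching. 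A secondary technical point is handling $x_0+y\notin\Omega$: there $(\psi(x_0)+\nabla\psi(x_0)\cdot y)_+$ may be positive while $\psi=0$, but since the tangent hyperplane approximates $\partial\Omega$ to order $1+\alpha$, the positive part is $\le C|y|^{1+\alpha}$, so the bound persists. Assembling these cases gives the stated inequality with $C=C(\Omega)$.
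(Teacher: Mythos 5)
Your outline would work, but it takes a genuinely different and considerably heavier route than the paper's. You argue geometrically: fix a nearest boundary point $z_0$, write $\partial\Omega$ as a $C^{1,\alpha}$ graph near $z_0$, compare $\psi$ to the (signed) distance, then compare that to the tangent-plane height, and split into three regimes ($|y|\lesssim d(x_0)$, $d(x_0)\lesssim|y|\lesssim\rho_0$, $|y|\gtrsim\rho_0$), with further case analysis for $x_0+y$ inside versus outside $\Omega$. Each of the chained approximations is plausible and the error bookkeeping can be made to close, but it is delicate — e.g.\ in the near regime one needs $|y|\le c_0d(x_0)$ for a small $c_0$ (not $|y|\le d(x_0)$) so that both $d(x_0+ty)\ge c\,d(x_0)$ along the segment and $\ell>0$ on the ball, before the bound $|D^2\psi|\le Cd(x_0)^{\alpha-1}$ applies; and the step where $\psi-|\nabla\psi(x_0)|d$ ``vanishes to order $1+\alpha$ at $z_0$'' needs the triangle $\psi(x_0)\to\psi(z_0)+\nabla\psi(z_0)\cdot(x_0-z_0)\to|\nabla\psi(x_0)|d(x_0)$ spelled out.

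The paper instead makes a single structural observation that collapses all of this: take a $C^{1,\alpha}(\R^n)$ extension $\tilde\psi$ of $\psi|_{\overline\Omega}$ chosen so that $\tilde\psi\le0$ in $\R^n\setminus\Omega$ (possible because $\psi$ vanishes on $\partial\Omega$ with inward-pointing gradient). Then $(\tilde\psi)_+=\psi$ \emph{globally}, the standard first-order Taylor estimate for a $C^{1,\alpha}(\R^n)$ function gives
\[
\bigl|\tilde\psi(x_0+y)-\psi(x_0)-\nabla\psi(x_0)\cdot y\bigr|\le C|y|^{1+\alpha}\quad\text{for all }y\in\R^n,
\]
and the $1$-Lipschitz property $|a_+-b_+|\le|a-b|$ transports this to $\psi$ and $\ell_+$ in one line. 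No graph coordinates, no nearest-point projection, no near/far split, no inside/outside split. If you ever face a lemma of this type again — ``a $C^{1,\alpha}$ function of one sign, truncated, is well approximated by its truncated linearization'' — the extend-then-truncate trick is the tool to reach for; the only thing it costs you is checking that a sign-respecting extension exists, which in a $C^{1,\alpha}$ domain with nondegenerate normal derivative is routine.
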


\begin{proof}
Let us consider $\tilde \psi$, a $C^{1,\alpha}(\R^n)$ extension of $\psi|_\Omega$ satisfying $\tilde\psi\leq 0$ in $\R^n\setminus\Omega$.
Then, since $\tilde \psi\in C^{1,\alpha}(\R^n)$ we clearly have
\[\left|\tilde\psi(x)-\psi(x_0)-\nabla\psi(x_0)\cdot (x-x_0)\right|\leq C|x-x_0|^{1+\alpha}\]
in all of $\R^n$. Here we used $\tilde \psi(x_0)=\psi(x_0)$ and $\nabla \tilde \psi(x_0)=\nabla\psi(x_0)$.

Now, using that $|a_+-b_+|\leq |a-b|$, combined with $(\tilde\psi)_+=\psi$, we find
\[\left|\psi(x)-\bigl(\psi(x_0)+\nabla \psi(x_0)\cdot (x-x_0)\bigr)_+\right|\leq C|x-x_0|^{1+\alpha}\]
for all $x\in \R^n$.
Thus, the lemma follows.
\end{proof}

The second one reads as follows.

\begin{lem}\label{lem15}
Let $\Omega$ be any $C^{1,\alpha}$ domain, $p\in \Omega$, and $\rho=d(p)/2$.
Let $\gamma>-1$ and $\beta\neq\gamma$.
Then,
\[\int_{B_1\setminus B_{\rho/2}}d^\gamma(p+y)\frac{dy}{|y|^{n+\beta}}\leq C\bigl(1+\rho^{\gamma-\beta}\bigr).\]
The constant $C$ depends only on $\gamma$, $\beta$, and $\Omega$.
\end{lem}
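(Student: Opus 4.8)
The plan is to split the annulus $B_1\setminus B_{\rho/2}$ into two regions according to the size of $|y|$ relative to $\rho$, namely the "near" region $A_1 = (B_{C_0\rho}\setminus B_{\rho/2})$ for a suitable dimensional constant $C_0$, and the "far" region $A_2 = B_1\setminus B_{C_0\rho}$, and to estimate the integral over each piece separately, using crucially that $d$ is Lipschitz (being a distance function) so that $d(p+y)\le d(p)+|y| = 2\rho+|y|$ and, in the far region, $d(p+y)\le |y|+2\rho\le C|y|$.

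In the near region $A_1$, I would use $d(p+y)\le 2\rho+C_0\rho\le C\rho$, so that $d^\gamma(p+y)\le C\rho^\gamma$ when $\gamma\ge 0$; when $-1<\gamma<0$ one cannot bound $d(p+y)$ from below pointwise, but one can integrate: for $y$ ranging over a ball of radius $\sim\rho$ around a point $p$ at distance $2\rho$ from $\partial\Omega$, the flattening of $\partial\Omega$ (valid since $\Omega\in C^{1,\alpha}\subset C^1$, locally a graph) gives $\int_{A_1} d^\gamma(p+y)\,dy\le C\rho^{n+\gamma}$ for $\gamma>-1$. Then $|y|^{-n-\beta}\le C\rho^{-n-\beta}$ on $A_1$, yielding a contribution $\le C\rho^{n+\gamma}\rho^{-n-\beta}=C\rho^{\gamma-\beta}$.

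In the far region $A_2$, I would use $d(p+y)\le |y|+2\rho\le C|y|$, so $d^\gamma(p+y)\le C|y|^\gamma$ when $\gamma\ge0$; for $-1<\gamma<0$ one again needs an integrated bound, slicing the far region into dyadic annuli $B_{2^{k+1}\rho}\setminus B_{2^k\rho}$ and using that on each such annulus $\int d^\gamma(p+y)\,dy\le C(2^k\rho)^{n+\gamma}$ by the same local-graph argument, so that $\int_{A_2}d^\gamma(p+y)|y|^{-n-\beta}\,dy\le C\sum_k (2^k\rho)^{n+\gamma}(2^k\rho)^{-n-\beta}=C\sum_k(2^k\rho)^{\gamma-\beta}$, the sum running over $2^k\rho$ from $\sim\rho$ up to $\sim1$. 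This geometric sum converges to $C(1+\rho^{\gamma-\beta})$ precisely because $\beta\ne\gamma$: if $\gamma-\beta>0$ the sum is dominated by its largest term, of order $1$; if $\gamma-\beta<0$ it is dominated by its smallest term, of order $\rho^{\gamma-\beta}$. Adding the two contributions gives the claimed bound $C(1+\rho^{\gamma-\beta})$.

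The main obstacle is the case $-1<\gamma<0$, where the pointwise bound $d^\gamma(p+y)\le C\rho^\gamma$ fails near $\partial\Omega$ and one must instead control $\int_{\{|y|\sim r\}} d^\gamma(p+y)\,dy$ for each scale $r\in[\rho,1]$; this is where the $C^1$ (indeed $C^{1,\alpha}$) regularity of $\Omega$ enters, via the local representation of $\partial\Omega$ as a graph (Remark \ref{rem-psi}), which lets one compare $d$ to the distance to a hyperplane and reduce the estimate to the elementary fact $\int_{B_r}(\text{dist to a hyperplane})^\gamma\,dy\le Cr^{n+\gamma}$ for $\gamma>-1$. The hypothesis $\beta\ne\gamma$ is exactly what makes the resulting dyadic sum geometric rather than logarithmically divergent.
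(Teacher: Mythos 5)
Your proposal is correct, and its overall architecture matches the paper's: decompose the annulus $B_1\setminus B_{\rho/2}$ into dyadic pieces at scales from $\rho$ up to $1$, bound $\int d^\gamma$ over each piece by $C r^{n+\gamma}$ (valid for $\gamma>-1$), multiply by $r^{-n-\beta}$, and sum the resulting geometric series $\sum r^{\gamma-\beta}$, which converges to $C(1+\rho^{\gamma-\beta})$ precisely because $\beta\neq\gamma$. The only real difference is in how the key sub-estimate $\int_{\{|y|\sim r\}} d^\gamma(p+y)\,dy\le Cr^{n+\gamma}$ is established: you flatten $\partial\Omega$ locally (graph representation, change of variables $x_n\mapsto x_n-g(x')$, then $\int_0^{Cr}t^\gamma dt\sim r^{\gamma+1}$), whereas the paper uses the coarea formula with $|\nabla d|\equiv1$ together with the bound $\mathcal H^{n-1}\bigl(\{d=t\}\cap(B_{2^{-k+1}}\setminus B_{2^{-k}})\bigr)\le C(2^{-k})^{n-1}$, and handles the region $\{d\ge\kappa_*\}$ (where $d$ is bounded below) separately. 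These are essentially the same computation phrased in two ways; the coarea version is slightly cleaner for large scales where a single graph chart no longer covers the dyadic shell, but your version works just as well with a finite covering or by treating the interior part $\{d\ge\kappa_*\}$ separately, as the paper does.
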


\begin{proof}
The proof is similar to that of \cite[Lemma 4.2]{RV}.

First, we may assume $p=0$.

Notice that, since $\Omega$ is $C^{1,\alpha}$, then there is $\kappa_*>0$ such that for any $t\in(0,\kappa_*]$ the level set $\{d=t\}$ is $C^{1,\alpha}$.
Since
\begin{equation}\label{kappa1}
\int_{(B_1\setminus B_\rho)\cap \{d\geq\kappa_*\}}d^\gamma(y)\frac{dy}{|y|^{n+\beta}}\leq C,
\end{equation}
then we just have to bound the same integral in the set $\{d<\kappa_*\}$.
Here we used that $B_r\cap \{d\geq\kappa_*\}=\emptyset$ if $r\leq \kappa_*-2\rho$, which follows from the fact that $d(0)=2\rho$.

We will use the following estimate for $t\in (0,\kappa_*)$
\[\mathcal{H}^{n-1}\bigl(\{d=t\}\cap (B_{2^{-k+1}}\setminus B_{2^{-k}})\bigr)\leq C(2^{-k})^{n-1},\]
which follows for example from the fact that $\{d=t\}$ is $C^{1,\alpha}$ (see the Appendix in \cite{RV}).
Note also that $\{d=t\}\cap B_r=\emptyset$ if $t>r+2\rho$.

Let $M\geq0$ be such that $2^{-M}\leq \rho\leq 2^{-M+1}$.
Then, using the coarea formula,
\begin{equation}\label{kappa2}\begin{split}
\int_{(B_1\setminus B_\rho)\cap \{d<\kappa_*\}}& d^\gamma(y)\frac{dy}{|y|^{n+\beta}}\leq \\
&\leq\ \sum_{k=0}^M \frac{1}{2^{-k(n+\beta)}}\int_{(B_{2^{-k+1}}\setminus B_{2^{-k}})\cap \{d<C2^{-k}\}}
    d^\gamma(y)|\nabla d(y)|\,dy \\
&\leq\  \sum_{k=0}^M \frac{1}{2^{-k(n+\beta)}}\int_0^{C2^{-k}}t^\gamma dt\int_{(B_{2^{-k+1}}\setminus B_{2^{-k}})\cap \{d=t\}}
    d\mathcal{H}^{n-1}(y)  \\
&\leq\  C\sum_{k=0}^M \frac{(2^{-k})^{\gamma+1}2^{-k(n-1)}}{2^{-k(n+\beta)}}= C\sum_{k=0}^M 2^{k(\beta-\gamma)}= C(1+\rho^{\gamma-\beta}).
\end{split}\end{equation}
Here we used that $\gamma\neq \beta$ ---in case $\gamma=\beta$ we would get $C(1+|\log \rho|)$.

Combining \eqref{kappa1} and \eqref{kappa2}, the lemma follows.
\end{proof}

We now give the:

\begin{proof}[Proof of Proposition \ref{lem11}]
Let $x_0\in \Omega$ and $\rho=d(x)$.

Notice that when $\rho\geq \rho_0>0$ then $\psi^s$ is smooth in a neighborhood of $x_0$, and thus $L(\psi^s)(x_0)$ is bounded by a constant depending only on $\rho_0$.
Thus, we may assume that $\rho\in(0,\rho_0)$, for some small $\rho_0$ depending only on $\Omega$.

Let us denote
\[\ell(x)=\bigl(\psi(x_0)+\nabla \psi(x_0)\cdot (x-x_0)\bigr)_+,\]
which satisfies
\[L(\ell^s)=0\qquad\textrm{in}\quad \{\ell>0\};\]
see \cite[Section 2]{RS-K}.

Now, notice that
\[\psi(x_0)=\ell(x_0)\qquad{\rm and}\qquad \nabla\psi(x_0)=\nabla \ell(x_0).\]
Moreover, by Lemma \ref{lem13} we have
\[\left|\psi(x_0+y)-\ell(x_0+y)\right|\leq C|y|^{1+\alpha},\]
and using $|a^s-b^s|\leq C|a-b|(a^{s-1}+b^{s-1})$ for $a,b\geq0$, we find
\begin{equation}\label{bound-1}
\left|\psi^s(x_0+y)-\ell^s(x_0+y)\right|\leq C|y|^{1+\alpha}\left(d^{s-1}(x_0+y)+\ell^{s-1}(x_0+y)\right).
\end{equation}
Here, we used that $\psi\leq Cd$.

On the other hand, since $\psi\in C^{1,\alpha}(\overline\Omega)$ and $\psi\geq cd$ in $\overline\Omega$, then it is not difficult to check that
\[\ell>0\quad \textrm{in}\quad B_{\rho/2}(x_0),\]
provided that $\rho_0$ is small (depending only on $\Omega$).
Thanks to this, one may estimate
\[\left|D^2(\psi^s-\ell^s)\right|\leq C\rho^{s+\alpha-2}\quad \textrm{in}\ B_{\rho/2},\]
and thus
\begin{equation}\label{bound-2}
\bigl|\psi^s-\ell^s\bigr|(x_0+y)\leq \|D^2(\psi^s-\ell^s)\|_{L^\infty(B_{\rho/2}(x_0))}|y|^2 \leq C\rho^{s+\alpha-2}|y|^2
\end{equation}
for $y\in B_{\rho/2}$.

Therefore, it follows from \eqref{bound-1} and \eqref{bound-2} that
\[\bigl|\psi^s-\ell^s\bigr|(x_0+y) \le
\begin{cases}
C \rho^{s+\alpha-2} |y|^2 \hfill\mbox{for } y\in B_{\rho/2}\\
C |y|^{1+\alpha}\left(d^{s-1}(x_0+y)+\ell^{s-1}(x_0+y)\right)  \quad \mbox{for } y\in B_1\setminus B_{\rho/2} \\
C  |y|^s              \hfill  \mbox{for } y\in \R^n \setminus B_1.
\end{cases}\]

Hence, recalling that $L(\ell^s)(x_0)=0$, we find
\[
\begin{split}
|L(\psi^s)(x_0)| &=  |L\bigl(\psi^s-\ell^s)(x_0)|
\\
&=\int_{\R^n} \bigl|\psi^s-\ell^s\bigr|(x_0+y) \frac{a(y/|y|)}{|y|^{n+2s}}\,dy
\\
&\le  \int_{B_{\rho/2}} C\rho^{s+\alpha-2}|y|^2\frac{dy}{|y|^{n+2s}}+\int_{\R^n\setminus B_1}C|y|^s \frac{dy}{|y|^{n+2s}}\,+\\
&\quad +\int_{B_1\setminus B_{\rho/2}} C|y|^{1+\alpha}\left(d^{s-1}(x_0+y)+\ell^{s-1}(x_0+y)\right)\frac{dy}{|y|^{n+2s}}
\\
&\le   C(\rho^{\alpha-s}+1)+C\int_{B_1\setminus B_{\rho/2}}\left(d^{s-1}(x_0+y)+\ell^{s-1}(x_0+y)\right)\frac{dy}{|y|^{n+2s-1-\alpha}}.
\end{split}
\]
Thus, using Lemma \ref{lem15} twice, we find
\[|L(\psi^s)(x_0)|\leq C\rho^{\alpha-s},\]
and \eqref{eq-psi-1} follows.
\end{proof}

When $\alpha>s$ the previous proof gives the following result, which states that for any operator \eqref{operator-L}-\eqref{ellipt-const} one has $L(d^s)\in L^\infty(\Omega)$.
Here, as in \cite{Grubb,RS-stable,RS-K}, $d$ denotes a fixed function that coincides with ${\rm dist}(x,\R^n\setminus\Omega)$ in a neighborhood of $\partial\Omega$, satisfies $d\equiv0$ in $\R^n\setminus\Omega$, and it is $C^{1,\alpha}$ in $\Omega$.

\begin{prop}\label{d^s-is-sol}
Let $s\in(0,1)$, $L$ be given by \eqref{operator-L}-\eqref{ellipt-const}, and $\Omega$ be any bounded $C^{1,\alpha}$ domain, with $\alpha>s$.
Then,
\[|L(d^s)|\leq C\quad \textrm{in}\ \Omega.\]
The constant $C$ depends only on $n$, $s$, $\Omega$, and ellipticity constants.
\end{prop}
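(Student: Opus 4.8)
The plan is to repeat the proof of Proposition \ref{lem11} almost verbatim, the decisive difference being that when $\alpha>s$ every power of $\rho$ that appears in the estimates is nonnegative, so the resulting bound stays uniform as $\rho\to0$ rather than degenerating like $d^{\alpha-s}$. Fix $x_0\in\Omega$ and set $\rho=d(x_0)$. Since $d$ may be taken smooth at a fixed positive distance from $\partial\Omega$, for $\rho\geq\rho_0$ the function $d^s$ is smooth near $x_0$ and $|L(d^s)(x_0)|\leq C(\rho_0)$; hence we may assume $\rho\in(0,\rho_0)$ with $\rho_0$ small depending only on $\Omega$, so that on $B_{\rho_0}(x_0)$ the function $d$ coincides with ${\rm dist}(\cdot,\R^n\setminus\Omega)$. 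Following Proposition \ref{lem11}, introduce the truncated affine approximation
\[\ell(x)=\bigl(d(x_0)+\nabla d(x_0)\cdot(x-x_0)\bigr)_+,\]
which, being (a multiple of) a power $s$ of a positive affine function on the half-space $\{\ell>0\}$, satisfies $L(\ell^s)=0$ there; see \cite[Section 2]{RS-K}. Since $\ell(x_0)=\rho>0$, this gives $L(\ell^s)(x_0)=0$. Moreover $|\nabla d(x_0)|=1$, so $\ell>0$ on $B_{\rho/2}(x_0)$ with $d\simeq\ell\simeq\rho$ there, and since $d\in C^{1,\alpha}(\overline\Omega)$, Lemma \ref{lem13} applies with $d$ in place of $\psi$ and yields $|d(x_0+y)-\ell(x_0+y)|\leq C|y|^{1+\alpha}$ for all $y\in\R^n$.

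From these facts together with $|a^s-b^s|\leq C|a-b|(a^{s-1}+b^{s-1})$ one obtains, exactly as in Proposition \ref{lem11}, the three-regime estimate
\[\bigl|d^s-\ell^s\bigr|(x_0+y)\le
\begin{cases}
C\rho^{s-1}|y|^{1+\alpha} & \text{for }y\in B_{\rho/2},\\
C|y|^{1+\alpha}\bigl(d^{s-1}(x_0+y)+\ell^{s-1}(x_0+y)\bigr) & \text{for }y\in B_1\setminus B_{\rho/2},\\
C|y|^s & \text{for }y\in\R^n\setminus B_1,
\end{cases}\]
where the first line uses $d\simeq\ell\simeq\rho$ on $B_{\rho/2}(x_0)$, and the last line uses $d^s(x_0+y)+\ell^s(x_0+y)\leq C(1+|y|^s)$, valid since $\Omega$ is bounded. (One could instead bound the first line by $C\rho^{s+\alpha-2}|y|^2$ through a second-derivative estimate as in Proposition \ref{lem11}; for $\alpha>s$ this refinement is not needed, and in particular no $C^2$ surrogate of $d$ has to be constructed.)

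Using $L(\ell^s)(x_0)=0$ I would then write $L(d^s)(x_0)=L(d^s-\ell^s)(x_0)$ and integrate the three bounds against $a(y/|y|)|y|^{-n-2s}$. The inner region contributes $C\rho^{s-1}\int_{B_{\rho/2}}|y|^{1+\alpha-n-2s}\,dy=C\rho^{\alpha-s}$, which is bounded because $\alpha>s>2s-1$ forces $1+\alpha-2s>0$; the outer region contributes a universal constant; and the middle region, by Lemma \ref{lem15} applied twice --- once to $\Omega$ with the integrand $d^{s-1}$ and once to the half-space $\{\ell>0\}$, whose boundary distance is $\ell$ since $|\nabla d(x_0)|=1$, with the integrand $\ell^{s-1}$, taking $\gamma=s-1>-1$ and $\beta=2s-1-\alpha\neq\gamma$ --- is at most $C(1+\rho^{\gamma-\beta})=C(1+\rho^{\alpha-s})$. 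Summing, $|L(d^s)(x_0)|\leq C(1+\rho^{\alpha-s})\leq C$ uniformly in $x_0$, which is the claim.

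The only step that goes beyond mechanically reusing the earlier argument is verifying that the genuine distance function $d$ (rather than the regularized $\psi$) enjoys the properties actually invoked near $\partial\Omega$: namely that it is $C^{1,\alpha}$ up to the boundary in a neighborhood of $\partial\Omega$ and that $|\nabla d|=1$ there. Both hold because the distance to the boundary of a $C^{1,\alpha}$ domain is $C^{1,\alpha}$ up to $\partial\Omega$ near it, so I do not expect this to be a genuine obstacle. What makes the statement work --- and what fails when $\alpha\leq s$ --- is precisely that $\alpha>s$ renders every exponent of $\rho$ nonnegative, so that no compensation of a negative power is required.
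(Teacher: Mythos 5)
Your proof is correct and follows the paper's own argument essentially verbatim: the same affine surrogate $\ell$, the identity $L(\ell^s)(x_0)=0$ from \cite[Section~2]{RS-K}, Lemma~\ref{lem13} applied to $d$ in place of $\psi$ (valid since $d$ is $C^{1,\alpha}$ near $\partial\Omega$ with $|\nabla d|=1$), and Lemma~\ref{lem15} for the middle annulus with $\gamma=s-1$ and $\beta=2s-1-\alpha$, the hypothesis $\alpha>s$ turning the final bound $C(1+\rho^{\alpha-s})$ into a uniform constant. Your side remark that the crude estimate $C\rho^{s-1}|y|^{1+\alpha}$ already yields a convergent integral over $B_{\rho/2}$ when $\alpha>s$ (because $1+\alpha-2s>1-s>0$), so no $C^2$ surrogate of $d$ is needed, is exactly what the paper does here, in contrast to Proposition~\ref{lem11}.
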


To our best knowledge, this result was only known in case that $L$ is the fractional Laplacian and $\Omega$ is $C^{1,1}$, or in case that $a\in C^\infty(S^{n-1})$ in \eqref{operator-L} and $\Omega$ is $C^\infty$ (in this case $L(d^s)$ is $C^\infty(\overline\Omega)$; see \cite{Grubb}).

Also, recall that for a general stable operator \eqref{operator-L} (with $a\in L^1(S^{n-1})$ and without the assumption \eqref{ellipt-const}) the result is false, since we constructed in \cite{RS-stable} an operator $L$ and a $C^\infty$ domain~$\Omega$ for which $L(d^s)\notin L^\infty(\Omega)$.
Hence, the assumption \eqref{ellipt-const} is somewhat necessary for Proposition \ref{d^s-is-sol} to be true.

\begin{proof}[Proof of Proposition \ref{d^s-is-sol}]
Let $x_0\in \Omega$, and $\rho=d(x)$.

Notice that when $\rho\geq \rho_0>0$ then $d^s$ is $C^{1+s}$ in a neighborhood of $x_0$, and thus $L(d^s)(x_0)$ is bounded by a constant depending only on $\rho_0$.
Thus, we may assume that $\rho\in(0,\rho_0)$, for some small $\rho_0$ depending only on $\Omega$.

Let us denote
\[\ell(x)=\bigl(d(x_0)+\nabla d(x_0)\cdot (x-x_0)\bigr)_+,\]
which satisfies
\[L(\ell^s)=0\qquad\textrm{in}\quad \{\ell>0\}.\]
Moreover, as in Proposition \ref{lem11}, we have
\begin{equation}\label{bound-11}
|d^s(x_0+y)-\ell^s(x_0+y)|\leq C|y|^{1+\alpha}\left(d^{s-1}(x_0+y)+\ell^{s-1}(x_0+y)\right).
\end{equation}
In particular,
\[|d^s(x_0+y)-\ell^s(x_0+y)|\leq C\rho^{s-1}|y|^{1+\alpha}\qquad\textrm{for}\ y\in B_{\rho/2}.\]

Hence, recalling that $L(\ell^s)(x_0)=0$, we find
\[
\begin{split}
|L(\psi^s)(x_0)| &=  |L\bigl(\psi^s-\ell^s)(x_0)|
\\
&=\int_{\R^n} \bigl|\psi^s-\ell^s\bigr|(x_0+y) \frac{a(y/|y|)}{|y|^{n+2s}}\,dy
\\
&\le  \int_{B_{\rho/2}} C\rho^{s-1}|y|^{1+\alpha}\frac{dy}{|y|^{n+2s}}+\int_{\R^n\setminus B_1}C|y|^s \frac{dy}{|y|^{n+2s}}\,+\\
&\quad +\int_{B_1\setminus B_{\rho/2}} C|y|^{1+\alpha}\left(d^{s-1}(x_0+y)+\ell^{s-1}(x_0+y)\right)\frac{dy}{|y|^{n+2s}}
\\
&\le   C(1+\rho^{\alpha-s}).
\end{split}
\]
Here we used Lemma \ref{lem15}.
Since $\alpha>s$, the result follows.
\end{proof}

We next show the following.

\begin{lem}\label{lem12}
Let $s\in(0,1)$, $L$ be given by \eqref{operator-L}-\eqref{ellipt-const}, and $\Omega$ be any $C^{1,\alpha}$ domain.
Let $\psi$ be given by Definition~\ref{d-is-psi}.
Then, for any $\epsilon\in(0,\alpha)$, we have
\begin{equation}\label{eq-psi-2}
L(\psi^{s+\epsilon})\geq cd^{\epsilon-s}-C\qquad \textrm{in}\ \Omega\cap B_{1/2},
\end{equation}
with $c>0$.
The constants $c$ and $C$ depend only on $\epsilon$, $s$, $n$, $\Omega$, and ellipticity constants.
\end{lem}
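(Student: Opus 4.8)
The plan is to follow the same scheme used in the proof of Proposition \ref{lem11}, but now tracking the leading-order behaviour of $L(\psi^{s+\epsilon})$ rather than just bounding it. Fix $x_0\in\Omega\cap B_{1/2}$ and set $\rho=d(x_0)$. As before, when $\rho\geq\rho_0$ the function $\psi^{s+\epsilon}$ is smooth near $x_0$ and $L(\psi^{s+\epsilon})(x_0)$ is bounded, which is absorbed into the $-C$ term; so we may assume $\rho\in(0,\rho_0)$ with $\rho_0$ small depending only on $\Omega$. Introduce the linearized distance $\ell(x)=\bigl(\psi(x_0)+\nabla\psi(x_0)\cdot(x-x_0)\bigr)_+$, which matches $\psi$ up to first order at $x_0$. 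The key point is that, unlike $\ell^s$, the function $\ell^{s+\epsilon}$ is \emph{not} $L$-harmonic in $\{\ell>0\}$; instead, since $e^{s+\epsilon}>s$, a half-space computation (as in \cite[Section 2]{RS-K}) gives $L(\ell^{s+\epsilon})(x_0)=c_0\,\ell(x_0)^{\epsilon-s}$ for some $c_0=c_0(n,s,\epsilon,\lambda,\Lambda)>0$, because the extra exponent makes $\ell^{s+\epsilon}$ a strict supersolution. Using $\psi(x_0)\geq c\,d(x_0)=c\rho$ this gives the main positive term $L(\ell^{s+\epsilon})(x_0)\geq c\rho^{\epsilon-s}$.

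Next I would estimate the error $L(\psi^{s+\epsilon}-\ell^{s+\epsilon})(x_0)$ and show it is bounded by $C\rho^{\epsilon-s+\sigma}$ for some $\sigma>0$ — actually, it suffices to bound it by a lower-order quantity, say $o(\rho^{\epsilon-s})$ plus a constant. This is done exactly as in Proposition \ref{lem11}: by Lemma \ref{lem13} one has $|\psi(x_0+y)-\ell(x_0+y)|\leq C|y|^{1+\alpha}$, and then the elementary inequality $|a^{s+\epsilon}-b^{s+\epsilon}|\leq C|a-b|(a^{s+\epsilon-1}+b^{s+\epsilon-1})$ together with $\psi\leq Cd$ yields, for $y\in B_1\setminus B_{\rho/2}$,
\[
\bigl|\psi^{s+\epsilon}-\ell^{s+\epsilon}\bigr|(x_0+y)\leq C|y|^{1+\alpha}\bigl(d^{s+\epsilon-1}(x_0+y)+\ell^{s+\epsilon-1}(x_0+y)\bigr).
\]
For $y\in B_{\rho/2}$ one uses that $\ell>0$ on $B_{\rho/2}(x_0)$ (for $\rho_0$ small) and the bound $|D^2(\psi^{s+\epsilon}-\ell^{s+\epsilon})|\leq C\rho^{s+\epsilon+\alpha-2}$ on that ball, giving $|\psi^{s+\epsilon}-\ell^{s+\epsilon}|(x_0+y)\leq C\rho^{s+\epsilon+\alpha-2}|y|^2$; for $y\in\R^n\setminus B_1$ one uses the crude bound $|\psi^{s+\epsilon}-\ell^{s+\epsilon}|(x_0+y)\leq C|y|^{s+\epsilon}$. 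Integrating these three pieces against $a(y/|y|)|y|^{-n-2s}\,dy$ and applying Lemma \ref{lem15} twice (with $\gamma=s+\epsilon-1>-1$ and the appropriate $\beta$, noting $\gamma\neq\beta$ since $\epsilon\neq\alpha$ — if $\epsilon=\alpha$ one gets a harmless logarithm) produces the bound $|L(\psi^{s+\epsilon}-\ell^{s+\epsilon})(x_0)|\leq C(1+\rho^{\alpha-s})$, and since $\alpha>\epsilon$ this is $C(1+\rho^{\epsilon-s})$ with a constant one can make small relative to $c$ by shrinking $\rho_0$ only near the leading term, or simply absorb as $C+\tfrac{c}{2}\rho^{\epsilon-s}$.

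Combining, $L(\psi^{s+\epsilon})(x_0)=L(\ell^{s+\epsilon})(x_0)+L(\psi^{s+\epsilon}-\ell^{s+\epsilon})(x_0)\geq c\rho^{\epsilon-s}-C\rho^{\alpha-s}-C$, and using $\alpha>\epsilon$ once more to absorb the middle term (for $\rho$ small the $\rho^{\epsilon-s}$ term dominates $\rho^{\alpha-s}$), we obtain $L(\psi^{s+\epsilon})(x_0)\geq c\rho^{\epsilon-s}-C=cd^{\epsilon-s}(x_0)-C$, which is \eqref{eq-psi-2}. The main obstacle I anticipate is the half-space computation $L(\ell^{s+\epsilon})=c_0\ell^{\epsilon-s}$ with a \emph{strictly positive} constant $c_0$ uniform over the ellipticity class: one must verify that the one-dimensional integral defining $L$ applied to $(x_n)_+^{s+\epsilon}$ is positive and bounded below, which for a single operator follows from the Fourier-side identity for $(-\Delta)^s$ extended to the anisotropic kernels of \cite{RS-K}, but requires care to keep the lower bound independent of $a(\theta)$ using only \eqref{ellipt-const}. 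Everything else is a routine repetition of the barrier estimate in Proposition \ref{lem11} with $s$ replaced by $s+\epsilon$.
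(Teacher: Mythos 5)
Your proposal follows the paper's proof essentially verbatim: compare $\psi^{s+\epsilon}$ with the linearization $\ell^{s+\epsilon}$ at $x_0$, invoke the half-space homogeneity computation from \cite{RS-K} to see that $L(\ell^{s+\epsilon})(x_0)$ is a uniformly positive multiple of $\rho^{\epsilon-s}$, and bound the error $|L(\psi^{s+\epsilon}-\ell^{s+\epsilon})(x_0)|$ by the same three-regime split and Lemma~\ref{lem15} as in Proposition~\ref{lem11}. The argument is correct. Two small slips, neither of which affects the conclusion but both worth fixing: (i) since $\epsilon-s<0$, obtaining a lower bound for $\ell(x_0)^{\epsilon-s}=\psi(x_0)^{\epsilon-s}$ in terms of $\rho^{\epsilon-s}$ requires the \emph{upper} bound $\psi\leq Cd$ from Definition~\ref{d-is-psi}, not $\psi\geq cd$ as you wrote; (ii) the borderline case of Lemma~\ref{lem15} is $\gamma=\beta$, i.e.\ $s+\epsilon-1=2s-1-\alpha$, which is $\epsilon+\alpha=s$, not $\epsilon=\alpha$ (this is not automatically excluded by $\epsilon\in(0,\alpha)$; as you note, the resulting logarithm is still dominated by $\rho^{\epsilon-s}$, so nothing breaks). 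Also, a slightly more careful bookkeeping in the middle regime gives the sharper error bound $C(1+\rho^{\alpha+\epsilon-s})$ as in the paper, rather than $C(1+\rho^{\alpha-s})$; your weaker estimate still closes the argument since $\alpha>\epsilon$ makes $\rho^{\alpha-s}=o(\rho^{\epsilon-s})$.
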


\begin{proof}
Exactly as in Proposition \ref{lem11}, one finds that
\begin{equation}\label{bound-1-eps}
\left|\psi^{s+\epsilon}(x_0+y)-\ell^{s+\epsilon}(x_0+y)\right|\leq C|y|^{1+\alpha}\left(d^{s+\epsilon-1}(x_0+y)+\ell^{s+\epsilon-1}(x_0+y)\right),
\end{equation}
and
\begin{equation}\label{bound-2-eps}
\bigl|\psi^{s+\epsilon}-\ell^{s+\epsilon}\bigr|(x_0+y)\leq C\rho^{s+\epsilon+\alpha-2}|y|^2
\end{equation}
for $y\in B_{\rho/2}$.
Therefore, as in Proposition \ref{lem11},
\[|L\bigl(\psi^{s+\epsilon}-\ell^{s+\epsilon})(x_0)| \le  C(1+\rho^{\alpha+\epsilon-s}).\]
We now use that, by homogeneity, we have
\[L(\ell^{s+\epsilon})(x_0)=\kappa\rho^{\epsilon-s},\]
with $\kappa>0$ (see \cite{RS-K}).
Thus, combining the previous two inequalities we find
\[L(\psi^{s+\epsilon})(x_0)\geq \kappa\rho^{\epsilon-s}-C(1+\rho^{\alpha+\epsilon-s})\geq \frac{\kappa}{2}\rho^{s-\epsilon}-C,\]
as desired.
\end{proof}

We now construct sub and supersolutions.

\begin{lem}[Supersolution]\label{supersol}
Let $s\in(0,1)$, $L$ be given by \eqref{operator-L}-\eqref{ellipt-const}, and $\Omega$ be any bounded $C^{1,\alpha}$ domain.
Then, there exists $\rho_0>0$ and a function $\phi_1$ satisfying
\[\left\{ \begin{array}{rcll}
L \phi_1 &\leq&-1&\textrm{in }\Omega\cap \{d\leq \rho_0\} \\
C^{-1}d^s\ \leq \ \phi_1 &\leq &Cd^s&\textrm{in }\Omega \\
\phi_1&=&0&\textrm{in }\R^n\setminus\Omega.
\end{array}\right.\]
The constants $C$ and $\rho_0$ depend only on $n$, $s$, $\Omega$, and ellipticity constants.
\end{lem}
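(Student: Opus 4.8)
The plan is to build $\phi_1$ as a combination of the function $\psi^s$ from Definition~\ref{d-is-psi} and a correction term of the form $\psi^{s+\epsilon}$ for a suitably chosen small $\epsilon\in(0,\alpha)$. The heuristic is that Proposition~\ref{lem11} gives $|L(\psi^s)|\le Cd^{\alpha-s}$, which is \emph{not} yet a supersolution (the right-hand side blows up near $\partial\Omega$ with the wrong sign possibilities), while Lemma~\ref{lem12} gives $L(\psi^{s+\epsilon})\ge cd^{\epsilon-s}-C$, i.e.\ a strongly negative-curving term that dominates near the boundary when $\epsilon<\alpha$. So I would try
\[
\phi_1 := A\,\psi^s - B\,\psi^{s+\epsilon}
\]
for constants $A,B>0$ to be fixed, noting that $\psi^{s+\epsilon}\le C\psi^s\le C'd^s$ on the bounded set, so the two-sided bound $C^{-1}d^s\le\phi_1\le Cd^s$ will hold in $\Omega\cap\{d\le\rho_0\}$ provided $B\rho_0^\epsilon$ is small relative to $A$; outside $\Omega$ both terms vanish since $\psi\equiv0$ there.

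The key computation is then: for $x_0\in\Omega$ with $\rho=d(x_0)\le\rho_0$,
\[
L\phi_1(x_0) = A\,L(\psi^s)(x_0) - B\,L(\psi^{s+\epsilon})(x_0) \le A\,C\rho^{\alpha-s} - B\bigl(c\rho^{\epsilon-s}-C\bigr).
\]
Since $\epsilon<\alpha$ we have $\rho^{\epsilon-s}\ge\rho^{\alpha-s}$ for $\rho\le 1$, so for $\rho$ small the term $-Bc\rho^{\epsilon-s}$ beats $AC\rho^{\alpha-s}$; more precisely, choosing $\rho_0$ small enough (depending on $A,B$) one gets $AC\rho^{\alpha-s}\le \frac{Bc}{2}\rho^{\epsilon-s}$ for all $\rho\le\rho_0$, hence
\[
L\phi_1(x_0)\le -\tfrac{Bc}{2}\rho^{\epsilon-s} + BC \le -\tfrac{Bc}{2}\rho_0^{\epsilon-s} + BC.
\]
Now $\rho_0^{\epsilon-s}\to\infty$ as $\rho_0\to0$ (since $\epsilon<s$), so after fixing $B=1$ and then shrinking $\rho_0$ we can force $L\phi_1\le-1$ on $\Omega\cap\{d\le\rho_0\}$. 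The choice of constants must be made in the right order: first pick $\epsilon\in(0,\alpha)$ (also $\epsilon<s$, which is automatic since $\alpha<s$), then the constants $c,C$ from Lemmas \ref{lem11} and \ref{lem12} are determined; then set $B=1$, $A=1$ (the $A\,L(\psi^s)$ term is harmless), and finally choose $\rho_0$ small enough that both the domination inequality $AC\rho^{\alpha-s}\le\frac{c}{2}\rho^{\epsilon-s}$ and the smallness $\frac{c}{2}\rho_0^{\epsilon-s}\ge C+1$ hold, and also small enough that $\phi_1\ge C^{-1}d^s>0$ on $\Omega\cap\{d\le\rho_0\}$.

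The main obstacle, and the only real subtlety, is bookkeeping the interdependence of the constants $\epsilon$, $A$, $B$, $\rho_0$ so there is no circularity — in particular one must check that the region where the two-sided bound $C^{-1}d^s\le\phi_1\le Cd^s$ is claimed is the \emph{same} $\rho_0$-neighborhood (or that it extends to all of $\Omega$, which it does after possibly enlarging $\phi_1$ far from the boundary by a harmless smooth modification, since away from $\partial\Omega$ both $\psi^s$ and $\psi^{s+\epsilon}$ are smooth and comparable to $d^s\sim 1$). A minor point: Lemma~\ref{lem12} is stated in $\Omega\cap B_{1/2}$, so if $\Omega$ is not contained in $B_{1/2}$ one should either rescale or observe that the estimate $L(\psi^{s+\epsilon})\ge cd^{\epsilon-s}-C$ in fact holds on all of $\Omega\cap\{d\le\rho_0\}$ by the same proof (the restriction to $B_{1/2}$ there is only to keep $\ell>0$ and $\rho$ bounded, both of which hold whenever $d(x_0)\le\rho_0$ with $\rho_0$ small). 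Once these constants are pinned down in the correct order, the verification of all three conditions is immediate from Propositions \ref{lem11} and Lemma~\ref{lem12}.
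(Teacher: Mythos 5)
Your decomposition $\phi_1=A\psi^s-B\psi^{s+\epsilon}$ and your use of Proposition~\ref{lem11} and Lemma~\ref{lem12} are exactly the paper's proof, and the order-of-quantifiers bookkeeping you describe is the right idea. The one place you over-complicate is the global lower bound $\phi_1\ge C^{-1}d^s$: with your choice $A=B=1$ the function $\phi_1=\psi^s(1-\psi^\epsilon)$ may be negative where $\psi>1$, and the ``harmless smooth modification'' you invoke is not entirely free for a nonlocal operator (adding a bump $\eta$ changes $L\phi_1$ everywhere by $L\eta$, and one must note $L\eta$ is bounded and then re-shrink $\rho_0$). The paper sidesteps all of this by simply taking the coefficient $B$ small enough that $1-B\psi^\epsilon\ge\tfrac12$ in \emph{all} of $\Omega$ (possible since $\psi$ is bounded on the bounded domain $\Omega$, with a bound depending only on $\Omega$, not on $\rho_0$); this choice of $B$ is made before $\rho_0$ and eliminates both the restriction of the two-sided bound to $\{d\le\rho_0\}$ and any need to modify $\phi_1$.
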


\begin{proof}
Let $\psi$ be given by Definition \ref{d-is-psi}, and let $\epsilon=\frac\alpha2$.
Then, by Proposition \ref{lem11} we have
\[-C_0d^{\alpha-s}\leq L(\psi^s)\leq C_0d^{\alpha-s},\]
and by Lemma \ref{lem12}
\[L(\psi^{s+\epsilon})\geq c_0d^{\epsilon-s}-C_0.\]

Next, we consider the function
\[\phi_1=\psi^s-c\psi^{s+\epsilon},\]
with $c$ small enough.
Then, $\phi_1$ satisfies
\begin{equation}\label{mecut}
L\phi_1\leq C_0d^{\alpha-s}+C_0-cc_1d^{\epsilon-s}\leq -1\qquad\textrm{in}\quad \Omega\cap\{d\leq \rho_0\},
\end{equation}
for some $\rho_0>0$.
Finally, by construction we clearly have
\[C^{-1}d^s\leq \phi_1\leq Cd^s\quad\textrm{in}\quad\Omega,\]
and thus the Lemma is proved.
\end{proof}

Notice that the previous proof gives in fact the following.

\begin{lem}\label{supersol-sing-RHS}
Let $s\in(0,1)$, $L$ be given by \eqref{operator-L}-\eqref{ellipt-const}, and $\Omega$ be any bounded $C^{1,\alpha}$ domain.
Then, there exist $\rho_0>0$ and a function $\phi_1$ satisfying
\[\left\{ \begin{array}{rcll}
L \phi_1 &\leq&-d^{\epsilon-s}&\textrm{in }\Omega\cap \{d\leq\rho_0\} \\
C^{-1}d^s\ \leq \ \phi_1 &\leq &Cd^s&\textrm{in }\Omega \\
\phi_1&=&0&\textrm{in }\R^n\setminus\Omega.
\end{array}\right.\]
The constants $C$ and $\rho_0$ depend only on $n$, $s$, $\Omega$, and ellipticity constants.
\end{lem}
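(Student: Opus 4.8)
The plan is to reread the proof of Lemma \ref{supersol} and notice that it already produces a supersolution with the singular right-hand side $-d^{\epsilon-s}$: the only change is that, instead of absorbing the favorable term into the constant $-1$, I keep a fixed fraction of it. So I would take the \emph{same} barrier: fix $\epsilon=\alpha/2$ (as in the proof of Lemma \ref{supersol}), let $\psi$ be as in Definition \ref{d-is-psi}, and set
\[\phi_1=\psi^s-c\,\psi^{s+\epsilon},\]
with $c>0$ chosen small enough --- depending only on $\Omega$ (through $\epsilon=\alpha/2$ and $\sup_{\overline\Omega}\psi$) --- that $c\,\psi^\epsilon\le\tfrac12$ on $\overline\Omega$. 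With this choice $\tfrac12\psi^s\le\phi_1\le\psi^s$ in $\Omega$, hence $C^{-1}d^s\le\phi_1\le Cd^s$ in $\Omega$ by \eqref{psi-d}, and $\phi_1\equiv0$ in $\R^n\setminus\Omega$ since $\psi\equiv0$ there.

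Next I would insert the two estimates already established: Proposition \ref{lem11} gives $|L(\psi^s)|\le C_0 d^{\alpha-s}$ in $\Omega$, and Lemma \ref{lem12} gives $L(\psi^{s+\epsilon})\ge c_0 d^{\epsilon-s}-C_0$ near $\partial\Omega$ (a covering argument extends it there from the statement in $\Omega\cap B_{1/2}$, using that $\Omega$ is bounded). Hence, near $\partial\Omega$,
\[L\phi_1=L(\psi^s)-c\,L(\psi^{s+\epsilon})\le C_0 d^{\alpha-s}+c\,C_0-c\,c_0\,d^{\epsilon-s}.\]
The whole point is the exponent inequality $\alpha-s>\epsilon-s$, i.e. $\epsilon<\alpha$: since $d^{\alpha-s}=d^{\alpha-\epsilon}d^{\epsilon-s}$ with $d^{\alpha-\epsilon}\to0$, the term $C_0 d^{\alpha-s}$ is negligible against $c\,c_0 d^{\epsilon-s}$ as $d\to0$, and the bounded term $c\,C_0$ is likewise negligible against $d^{\epsilon-s}$ because $\epsilon-s<0$. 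Thus, choosing $\rho_0>0$ small (depending only on $n,s,\Omega$, and the ellipticity constants), on $\Omega\cap\{d\le\rho_0\}$ one has $C_0 d^{\alpha-s}\le\tfrac14 c\,c_0 d^{\epsilon-s}$ and $c\,C_0\le\tfrac14 c\,c_0 d^{\epsilon-s}$, whence
\[L\phi_1\le-\tfrac12\,c\,c_0\,d^{\epsilon-s}\qquad\textrm{in }\Omega\cap\{d\le\rho_0\}.\]
Finally I would replace $\phi_1$ by $\tfrac{2}{c\,c_0}\phi_1$; this preserves $\phi_1\equiv0$ outside $\Omega$ and the two-sided bound by $d^s$ (only the constant $C$ changes), and turns the last inequality into $L\phi_1\le-d^{\epsilon-s}$ in $\Omega\cap\{d\le\rho_0\}$, which is the claim.

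I do not expect a genuine obstacle: both analytic inputs, Proposition \ref{lem11} and Lemma \ref{lem12}, are already proved, and what remains is only the elementary exponent bookkeeping above. The one mild point to keep an eye on is compatibility --- the smallness of $c$ is forced by the $d^s$-comparison, while the final inequality wants the coefficient of $d^{\epsilon-s}$ to be at least $1$; this is reconciled for free by the last rescaling, since multiplying a barrier by a fixed positive constant does not affect any of the conclusions except the value of $C$.
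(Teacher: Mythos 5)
Your proposal is correct and follows exactly the paper's argument: the paper's own proof of this lemma is a one-line remark that the barrier $\phi_1=\psi^s-c\psi^{s+\epsilon}$ from Lemma \ref{supersol} (see \eqref{mecut}) already satisfies $L\phi_1\leq -d^{\epsilon-s}$ before the term $-cc_1d^{\epsilon-s}$ is coarsened to $-1$, and you have simply written that observation out in full, with the harmless final rescaling to normalize the coefficient.
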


\begin{proof}
The proof is the same as Lemma \ref{supersol}; see \eqref{mecut}.
\end{proof}

We finally construct a subsolution.

\begin{lem}[Subsolution]
Let $s\in(0,1)$, $L$ be given by \eqref{operator-L}-\eqref{ellipt-const}, and $\Omega$ be any bounded $C^{1,\alpha}$ domain.
Then, for each $K\subset\subset\Omega$ there exists a function $\phi_2$ satisfying
\[\left\{ \begin{array}{rcll}
L \phi_2 &\geq&1&\textrm{in }\Omega\setminus K \\
C^{-1}d^s\ \leq\ \phi_2 &\leq &Cd^s&\textrm{in }\Omega \\
\phi_2&=&0&\textrm{in }\R^n\setminus\Omega.
\end{array}\right.\]
The constants $c$ and $C$ depend only on $n$, $s$, $\Omega$, $K$, and ellipticity constants.
\end{lem}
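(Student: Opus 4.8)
The plan is to mirror the construction of the supersolution $\phi_1$ in Lemma~\ref{supersol}, but now working \emph{against} the interior set $K$ rather than against the boundary, and reversing the sign in the correction term. First I would fix a small $\epsilon\in(0,\alpha)$, say $\epsilon=\alpha/2$, and recall from Proposition~\ref{lem11} and Lemma~\ref{lem12} the two estimates
\[-C_0d^{\alpha-s}\leq L(\psi^s)\leq C_0d^{\alpha-s}\qquad\textrm{and}\qquad L(\psi^{s+\epsilon})\geq c_0d^{\epsilon-s}-C_0\quad\textrm{in}\ \Omega,\]
where $\psi$ is the regularized distance of Definition~\ref{d-is-psi}. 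The key point is that near $\partial\Omega$ the dominant term is the singular $d^{\epsilon-s}$ coming from $\psi^{s+\epsilon}$, which is \emph{positive} and blows up, so an appropriate positive combination will satisfy $L\phi_2\geq 1$ in a boundary neighborhood $\Omega\cap\{d\leq\rho_0\}$.

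The natural candidate is
\[\phi_2=c_1\psi^{s+\epsilon}+c_2\eta,\]
where $c_1>0$ is small, chosen so that $c_1\psi^{s+\epsilon}$ still satisfies $c_1 c_0 d^{\epsilon-s}-c_1 C_0\geq 1$ on $\{0<d\leq\rho_0\}$ for a suitable $\rho_0=\rho_0(c_1)$, and $\eta$ is a smooth nonnegative bump supported in $\Omega$, vanishing on a neighborhood of $K$ but positive on $\{d\geq\rho_0/2\}$, and normalized so that $c_2 c_1^{-1}$-type terms are controlled. On $\Omega\cap\{d\leq\rho_0\}$ one has $\eta\equiv$ (its value there), and since $\eta$ is smooth and supported away from the singular region, $L\eta$ is bounded there, so shrinking $\rho_0$ if necessary and using the singular lower bound $c_1 c_0 d^{\epsilon-s}\to+\infty$ absorbs $\|L\eta\|_\infty$ and yields $L\phi_2\geq 1$. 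On the complementary region $\Omega\cap\{\rho_0/2\leq d\}$ away from $K$, the function $\psi^{s+\epsilon}$ is smooth and $\Omega\setminus K$ is at positive distance from $K$; here I would instead use that $L\eta$ can be made large and positive by taking $\eta$ to be, e.g., a large multiple of a smooth function that is strictly subharmonic-like for $L$ on $\Omega\setminus K$ (concretely, one can take $\eta$ to be $C^2$ with $L\eta\geq 1$ on $\overline{\Omega\setminus K'}$ for an intermediate set $K\subset\subset K'\subset\subset\Omega$, which is possible since on such a compact set $L$ acts like a uniformly elliptic operator of order $2s$ on smooth functions and one can perturb by adding a large constant times a strictly concave cutoff). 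Choosing the constants in the right order — first $c_1$, then $\rho_0$, then $\eta$ and $c_2$ — makes both regions work simultaneously, while the two-sided bound $C^{-1}d^s\leq\phi_2\leq Cd^s$ is immediate from $\psi\asymp d$, $\epsilon>0$, and the compact support of $\eta$ in $\Omega$.

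I expect the main obstacle to be the bookkeeping of constants and regions, in particular making sure that the \emph{same} $\phi_2$ satisfies $L\phi_2\geq 1$ on \emph{all} of $\Omega\setminus K$ and not just in a boundary layer: one must patch the singular-barrier estimate near $\partial\Omega$ with the smooth-barrier estimate on the compact interior annulus $\{d\geq\rho_0/2\}\setminus K$, and check that the $c_2\eta$ term, which is needed to dominate in the interior, does not destroy the inequality near $\partial\Omega$ (it does not, because there $\eta$ is smooth and bounded and gets absorbed by the $d^{\epsilon-s}$ blow-up). A secondary technical point is that to even compute $L(\psi^{s+\epsilon})$ and $L\eta$ pointwise one needs $C^2$ regularity locally, which is exactly why $\psi$ rather than $d$ is used and why $\eta$ is taken smooth; nonlocality means one must also control the contributions from the region outside a neighborhood of the evaluation point, but these are uniformly bounded since $\phi_2$ is a bounded function comparable to $d^s\leq C$ and the tails $\int_{\R^n\setminus B_1}|y|^{-n-2s}\,dy$ converge. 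Once these pieces are assembled, the lemma follows.
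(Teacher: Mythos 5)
Your construction does not work, and for two distinct reasons.

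\textbf{The interior term has the wrong support and the wrong sign.} You want $\eta$ to be a nonnegative smooth bump, compactly supported in $\Omega$, \emph{vanishing near $K$}, and with $L\eta\geq1$ on $\Omega\setminus K$. But if $\eta\geq0$ is smooth, compactly supported, and $\eta\equiv0$ near $K$, then $\eta$ attains its global maximum at some interior point $x^*\in\Omega\setminus K$, and at such a point
\[L\eta(x^*)=\int_{\R^n}\frac{\eta(x^*+y)+\eta(x^*-y)-2\eta(x^*)}{2}\,\frac{a(y/|y|)}{|y|^{n+2s}}\,dy\leq 0,\]
since the integrand is nonpositive. So it is impossible to have $L\eta\geq1$ in all of $\Omega\setminus K$ with your choice of support; your appeal to a ``strictly subharmonic-like'' cutoff (and to a ``strictly concave'' one, which would go the wrong way anyway) cannot fix this. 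The trick that the paper uses is exactly the opposite placement: take $\eta\in C^\infty_c(K)$, $\eta\geq0$, supported \emph{inside} $K$. Then for any $x\in\Omega\setminus K$ one has $\eta(x)=0$ (the global minimum), so
\[L\eta(x)=\int_{\R^n}\frac{\eta(x+y)+\eta(x-y)}{2}\,\frac{a(y/|y|)}{|y|^{n+2s}}\,dy\geq c_1>0,\]
uniformly on $\overline{\Omega\setminus K}$ by compactness, using $a\geq\lambda>0$ and $\eta\not\equiv0$. This is the genuinely nonlocal observation that makes the interior part of the barrier trivial; no elliptic-regularity or subharmonicity argument is needed.

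\textbf{Dropping $\psi^s$ breaks the two-sided bound.} You set $\phi_2=c_1\psi^{s+\epsilon}+c_2\eta$, with $\eta$ compactly supported in $\Omega$. Near $\partial\Omega$ (outside the support of $\eta$) this gives $\phi_2=c_1\psi^{s+\epsilon}\asymp d^{s+\epsilon}$, which decays strictly faster than $d^s$, so the required lower bound $C^{-1}d^s\leq\phi_2$ fails. The paper's ansatz is $\phi_2=\psi^s+\psi^{s+\epsilon}+C\eta$; the leading $\psi^s$ term (whose $L$-action is controlled by Proposition~\ref{lem11}, $|L(\psi^s)|\leq C_0d^{\alpha-s}$) is what produces the $d^s$ comparability, while $\psi^{s+\epsilon}$ supplies the singular positive term $c_0d^{\epsilon-s}$ that dominates $-C_0d^{\alpha-s}$ near $\partial\Omega$ because $\epsilon<\alpha$, and $C\eta$ with $C$ large absorbs the remaining bounded negative terms throughout $\Omega\setminus K$.
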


\begin{proof}
First, notice that if $\eta\in C^\infty_c(K)$ then $L\eta\geq c_1>0$ in $\Omega\setminus K$.
Hence,
\[\phi_2=\psi^s+\psi^{s+\epsilon}+C\eta\]
satisfies
\[L\phi_2\geq -C_0d^{\alpha-s}+c_0d^{\epsilon-s}-C_0+Cc_1\geq 1\qquad \textrm{in}\quad \Omega\setminus K,\]
provided that $C$ is chosen large enough.
\end{proof}

\section{Regularity in $C^{1,\alpha}$ domains}
\label{sec-proof-main}

The aim of this section is to prove Proposition \ref{prop-Cs} and Theorem \ref{thm-bdry-reg}.

\subsection{H\"older regularity up to the boundary}

We will prove first the following result, which is similar to Proposition \ref{prop-Cs} but allows $u$ to grow at infinity and $f$ to be singular near $\partial\Omega$.

\begin{prop}\label{prop-Cs-2}
Let $s\in(0,1)$, $L$ be any operator of the form \eqref{operator-L}-\eqref{ellipt-const}, and $\Omega$ be any bounded $C^{1,\alpha}$ domain.
Let $u$ be a solution to \eqref{pb}, and assume that
\[|f|\leq Cd^{\epsilon-s}\quad\textrm{in}\quad \Omega.\]
Then,
\[\|u\|_{C^s(B_{1/2})}\leq C\left(\|d^{s-\epsilon}f\|_{L^\infty(B_1\cap\Omega)}+\sup_{R\geq1}R^{\delta-2s}\|u\|_{L^\infty(B_R)}\right).\]
The constant $C$ depends only on $n$, $s$, $\epsilon$, $\delta$, $\Omega$, and ellipticity constants.
\end{prop}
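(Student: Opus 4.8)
The plan is to combine the barriers constructed in Section \ref{sec-barriers} with the interior regularity estimates for solutions of \eqref{pb}. First I would establish the pointwise bound $|u(x)|\le C\,d^s(x)$ for $x$ near $\partial\Omega$, where $C$ is the quantity on the right-hand side of the claimed estimate. To do this, after a standard normalization (dividing by $\|d^{s-\epsilon}f\|_{L^\infty(B_1\cap\Omega)}+\sup_{R\ge1}R^{\delta-2s}\|u\|_{L^\infty(B_R)}$) one compares $u$ with a multiple of the supersolution $\phi_1$ from Lemma \ref{supersol-sing-RHS}, which satisfies $L\phi_1\le -d^{\epsilon-s}$ in $\Omega\cap\{d\le\rho_0\}$ and $c\,d^s\le\phi_1\le C\,d^s$. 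The subtlety is that the comparison must be done locally (in $B_1$), so one has to control the contribution of $u$ far from the origin — this is exactly where the growth hypothesis $\|u\|_{L^\infty(B_R)}\le C R^{2s-\delta}$ enters, since $\int_{\R^n\setminus B_1}|u(y)||y|^{-n-2s}\,dy$ converges and is controlled by $\sup_R R^{\delta-2s}\|u\|_{L^\infty(B_R)}$. Thus $M\phi_1 + (\text{a tail correction})$ dominates $u$ by the maximum principle in the thin region $\Omega\cap\{d\le\rho_0\}\cap B_{3/4}$, after noting $u\le C$ on the complementary set $\{d\ge\rho_0\}\cup(\R^n\setminus B_{3/4})$ where $u$ is already bounded. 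The same argument applied to $-u$ (or one can invoke a subsolution away from the boundary) gives $|u|\le C d^s$ in $B_{3/4}$.

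Next, with the bound $|u|\le C d^s$ in hand, I would upgrade to the full $C^s$ estimate via a rescaling argument combined with interior estimates. Fix $x_0\in\Omega\cap B_{1/2}$ and let $\rho=d(x_0)$; consider $B_{\rho/2}(x_0)\subset\Omega$ and rescale by setting $v(x)=\rho^{-s}u(x_0+\rho x)$ for $x\in B_{1/2}$. Then $v$ solves $Lv=\tilde f$ in $B_{1/2}$ with $\tilde f(x)=\rho^{2s-s}f(x_0+\rho x)=\rho^s f(x_0+\rho x)$, and $|\tilde f|\le C\rho^s(\rho)^{\epsilon-s}=C\rho^\epsilon\le C$, while $\|v\|_{L^\infty(B_R)}\le C R^s$ by the bound $|u|\le Cd^s$ and the triangle inequality for the distance, so in particular $\|v\|_{L^\infty(\R^n)}$ grows at most like $R^s\le R^{2s}$, which is admissible for the interior estimates. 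Interior regularity (of the type in \cite{RS-K}) then gives $[v]_{C^s(B_{1/4})}\le C$, which upon scaling back yields the oscillation control $[u]_{C^s(B_{\rho/4}(x_0))}\le C$ with $C$ independent of $x_0$. A now-standard covering/interpolation argument (see e.g. the analogous step in \cite{RS-stable}) combines these interior $C^s$ seminorm bounds at every interior point with the boundary bound $|u(x)-u(z)|=|u(x)|\le C d^s(x)\le C|x-z|^s$ for $z\in\partial\Omega$ to produce the global estimate $\|u\|_{C^s(B_{1/2})}\le C$.

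The main obstacle I expect is the first step: carrying out the barrier comparison \emph{locally} in $B_1$ rather than globally. One must be careful that $\phi_1$ is only a supersolution in the thin collar $\Omega\cap\{d\le\rho_0\}$, so the comparison domain is an annular/collar region whose boundary has two pieces — the inner piece $\{d=\rho_0\}$, where one needs $u\le M\phi_1\sim M\rho_0^s$, forcing $M$ to absorb $\|u\|_{L^\infty}$; and the far piece $\partial B_{3/4}\cap\Omega$, handled similarly. In addition, since $\phi_1$ is not globally a supersolution, evaluating $L$ on the candidate barrier at points of the collar picks up the nonlocal tail from regions where the barrier is not defined as a supersolution; one must check these tails are lower-order, controlled again by the growth of $u$ and by $|L\phi_1|\le C$ away from the boundary. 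Once the right auxiliary function $M\phi_1 + C(\|\cdot\|)\,\chi$ is assembled with the correct constants, the maximum principle for $L$ (available for the viscosity/distributional solutions considered) closes the argument; the rest is bookkeeping and the routine rescaling described above.
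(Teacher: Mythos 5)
Your proposal follows the same two-step strategy as the paper: a barrier comparison with (a rescaled version of) $\phi_1$ from Lemma \ref{supersol-sing-RHS} to obtain $|u|\le Cd^s$, followed by interior estimates at scale $d(x_0)$ and a covering argument to convert the pointwise bound into the $C^s$ seminorm estimate. The normalization, the rescaling $v(x)=\rho^{-s}u(x_0+\rho x)$, the growth control $\|v\|_{L^\infty(B_R)}\lesssim R^s$, and the final covering argument are all the same.

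There is, however, one place where your sketch glosses over a real difficulty that it flags but does not actually resolve. You correctly identify that the comparison must be carried out in a collar, and that the boundary of the comparison region has an ``inner'' piece $\{d=\rho_0\}$ and a ``far'' piece $\partial B_{3/4}\cap\Omega$. You say the inner piece is handled by choosing $M$ large (fine, since $\phi_1\sim\rho_0^s$ there) and that the far piece is ``handled similarly.'' But it is not: $\partial B_{3/4}\cap\Omega$ approaches $\partial\Omega$, and near that intersection $d$ is small so $\phi_1\sim d^s$ is small; no choice of a multiplicative constant $M$ makes $M\phi_1$ dominate $u$ on all of $\partial B_{3/4}\cap\Omega$. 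The paper's resolution is a specific technical device you did not anticipate: replace $u$ by the truncation $w=u\chi_{B_1}$ (which absorbs the nonlocal tail into the right-hand side via the growth hypothesis, so $|Lw|\le Cd^{\epsilon-s}$ in $B_{3/4}$), and build $\phi_1$ not for $\Omega$ but for an enlarged $C^{1,\alpha}$ domain $\widetilde\Omega$ with $B_1\cap\Omega\subset\widetilde\Omega$, $B_{1/2}\cap\partial\Omega\subset\partial\widetilde\Omega$, and $\tilde d\ge c>0$ on $\Omega\cap(B_1\setminus B_{3/4})$. Because $\phi_1\gtrsim\tilde d^s$, this makes $\phi_1\ge c>0$ on the entire far piece, including where it grazes $\partial\Omega$, so the comparison closes. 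Your ``tail correction'' idea addresses the nonlocal contribution but not this geometric obstruction; with the $\widetilde\Omega$ trick inserted, the rest of your argument is exactly the paper's.
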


\begin{proof}
Dividing by a constant, we may assume that
\[\|d^{s-\epsilon}f\|_{L^\infty(B_1\cap\Omega)}+\sup_{R\geq1}R^{\delta-2s}\|u\|_{L^\infty(B_R)}\leq 1.\]
Then, the truncated function $w=u\chi_{B_1}$ satisfies
\[|Lw|\leq Cd^{\epsilon-s}\quad \textrm{in}\quad \Omega\cap B_{3/4},\]
$w\leq 1$ in $B_1$, and $w\equiv0$ in $\R^n\setminus B_1$.

Let $\widetilde\Omega$ be a bounded $C^{1,\alpha}$ domain satisfying: $B_1\cap\Omega\subset \widetilde\Omega$; $B_{1/2}\cap \partial\Omega\subset \partial\widetilde\Omega$; and ${\rm dist}(x,\partial\widetilde\Omega)\geq c>0$ in $\Omega\cap (B_1\setminus B_{3/4})$.
Let $\phi_1$ be the function given by Lemma \ref{supersol}, satisfying
\[\left\{ \begin{array}{rcll}
L \phi_1 &\leq&-\tilde d^{\epsilon-s}&\textrm{in }\widetilde\Omega\cap \{\tilde d\leq \rho_0\} \\
c\tilde d^s\ \leq \ \phi_1 &\leq &C\tilde d^s&\textrm{in }\widetilde\Omega \\
\phi_1&=&0&\textrm{in }\R^n\setminus\Omega,
\end{array}\right.\]
where we denoted $\tilde d(x)={\rm dist}(x,\R^n\setminus\widetilde\Omega)$.

Then, the function $\varphi=C\phi_1$ satisfies
\[\left\{ \begin{array}{rcll}
L \varphi &\leq&-Cd^{\epsilon-s}&\textrm{in }\Omega\cap B_{1/2}\cap  \{d\leq \rho_0\} \\
\varphi &\leq &Cd^s&\textrm{in }\Omega\cap B_{1/2} \\
\varphi &\geq &1&\textrm{in }\Omega\cap (B_1\setminus B_{3/4})\quad\textrm{and in}\quad \Omega\cap B_{1/2}\cap \{d\geq\rho_0\} \\
\varphi&\geq&0&\textrm{in }\R^n.
\end{array}\right.\]
In particular, if $C$ is large enough then we have $L(\varphi-w)\leq0$ in $\Omega\cap B_{1/2}\cap \{d\leq \rho_0\}$, and $\varphi-w\geq0$ in $\R^n\setminus (\Omega\cap B_{1/2}\cap \{d\leq\rho_0\})$.

Therefore, the maximum principle yields $w\leq \varphi$, and thus $w\leq Cd^s$ in $B_{1/2}$.
Replacing $w$ by $-w$, we find
\begin{equation}\label{d^s}
|w|\leq Cd^s\qquad \textrm{in}\quad B_{1/2}.
\end{equation}

Now, it follows from the interior estimates of \cite[Theorem 1.1]{RS-stable} that
\[r^{s}[w]_{C^s(B_r(x_0))}\leq C\bigl(r^{2s}\|Lw\|_{L^\infty(B_{2r}(x_0))}+\sup_{R\geq1}R^{\delta-2s}\|w\|_{L^\infty(B_{rR}(x_0))}\bigr)\]
for any ball $B_r(x_0)\subset \Omega\cap B_{1/2}$ with $2r=d(x_0)$.
Now, taking $\delta=s$ and using \eqref{d^s}, we find
\[R^{-s}\|w\|_{L^\infty(B_{rR}(x_0))}\leq Cr^s\quad\textrm{for all}\ R\geq1.\]
Thus, we have
\[[w]_{C^s(B_r(x_0))}\leq C\]
for all balls $B_r(x_0)\subset \Omega\cap B_{1/2}$ with $2r=d(x_0)$.
This yields
\[\|w\|_{C^s(B_{1/2})}\leq C.\]
Indeed, take $x,y\in B_{1/2}$, let $r=|x-y|$ and $\rho=\min\{d(x),d(y)\}$.
If $2\rho\geq r$, then using $|u|\leq Cd^s$
\[|u(x)-u(y)|\leq |u(x)|+|u(y)|\leq Cr^s+C(r+\rho)^s\leq \bar C\rho^s.\]
If $2\rho<r$ then $B_{2\rho}(x)\subset\Omega$, and hence
\[|u(x)-u(y)|\leq \rho^s[u]_{C^s(B_\rho(x))}\leq C\rho^s.\]
Thus, the proposition is proved.
\end{proof}

The proof of Proposition is now immediate.

\begin{proof}[Proof of Proposition \ref{prop-Cs}]
The result is a particular case of Proposition \ref{prop-Cs-2}.
\end{proof}

\subsection{Regularity for $u/d^s$}

Let us now prove Theorem \ref{thm-bdry-reg}.
For this, we first show the following.

\begin{prop}\label{prop-bdry-reg}
Let $s\in(0,1)$ and $\alpha\in(0,s)$.
Let $L$ be any operator of the form \eqref{operator-L}-\eqref{ellipt-const}, $\Omega$ be any $C^{1,\alpha}$ domain, and $\psi$ be given by Definition \ref{d-is-psi}.

Assume that $0\in \partial\Omega$, and that $\partial\Omega\cap B_1$ can be represented as the graph of a $C^{1,\alpha}$ function with norm less or equal than~1.

Let $u$ be any solution to \eqref{pb}, and let
\[K_0=\|d^{s-\alpha}f\|_{L^\infty(B_1\cap\Omega)}+\|u\|_{L^\infty(\R^n)}.\]
Then, there exists a constant $Q$ satisfying $|Q|\leq CK_0$ and
\[\bigl|u(x)-Q\psi^s(x)\bigr|\leq CK_0|x|^{s+\alpha}.\]
The constant $C$ depends only on $n$, $s$, and ellipticity constants.
\end{prop}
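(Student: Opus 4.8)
The plan is to establish the expansion $|u(x)-Q\psi^s(x)|\le CK_0|x|^{s+\alpha}$ by a contradiction–compactness argument in the spirit of \cite{RS-stable,RS-K}. After normalizing $K_0\le 1$, I would argue that it suffices to prove the ``one-scale improvement'': there exist $\rho_0\in(0,1)$ and $C_*$ such that for every solution $u$ with $\|u\|_{L^\infty(\R^n)}\le 1$, $\|d^{s-\alpha}f\|_{L^\infty}\le 1$, and every $k\ge 0$, one can find a constant $Q_k$ with $|Q_k-Q_{k+1}|\le C_*\rho_0^{k(s+\alpha)}$ and
\[\sup_{B_{\rho_0^k}}\bigl|u-Q_k\psi^s\bigr|\le \rho_0^{k(s+\alpha)}.\]
Summing the geometric series gives a limit $Q=\lim Q_k$ with $|Q|\le C$ and the desired decay at dyadic radii, hence (by interpolating over annuli, using $\psi^s\le Cd^s\le C|x|^s$) at all radii. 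So the real content is the iterative step, which I would prove by contradiction.

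**The compactness step.** Suppose the claim fails. Then there are sequences $\Omega_j$ (with $0\in\partial\Omega_j$, $\partial\Omega_j\cap B_1$ a graph of $C^{1,\alpha}$-norm $\le 1$), operators $L_j$ of the form \eqref{operator-L}-\eqref{ellipt-const}, solutions $u_j$, right-hand sides $f_j$, and integers $k_j$ such that the estimate at scale $\rho_0^{k_j}$ fails for all choices of constant, even though it holds at all larger scales $\rho_0^{k}$, $k<k_j$, with the best constants $Q_k^{(j)}$. Define the blow-up sequence
\[v_j(x)=\frac{\bigl(u_j-Q_{k_j}^{(j)}\psi_j^s\bigr)(\rho_0^{k_j}x)}{\rho_0^{k_j(s+\alpha)}},\]
where $\psi_j$ is the regularized distance of $\Omega_j$. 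The failure of the improvement-of-flatness normalizes $\|v_j\|_{L^\infty(B_1)}\ge 1$ (or $\ge c>0$), while the validity at coarser scales plus the bound $|Q_{k}^{(j)}-Q_{k+1}^{(j)}|\le C_*\rho_0^{k(s+\alpha)}$ yields a growth control $\|v_j\|_{L^\infty(B_R)}\le C R^{s+\alpha}$ for $R\le \rho_0^{-k_j}$ — this is the standard bookkeeping lemma one extracts from the iteration hypothesis. The domains $\Omega_j$, rescaled by $\rho_0^{-k_j}$, have $C^{1,\alpha}$ graphs with norm $\le \rho_0^{k_j\alpha}\to 0$ (if $k_j\to\infty$; the case $k_j$ bounded is easy and handled separately), so they converge to a half-space $\{x\cdot e>0\}$; correspondingly $\psi_j^s$ rescaled converges to $(x\cdot e)_+^s$. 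Using Proposition \ref{prop-Cs-2} (the $C^s$ estimate with growth and singular $f$, applied at unit scale to the rescaled $u_j$) together with interior estimates \cite[Theorem 1.1]{RS-stable}, the $v_j$ are uniformly $C^{s}$ on compacts (after a further argument near $\partial\Omega_j$ as in the proof of Proposition~\ref{prop-Cs-2}), so a subsequence converges locally uniformly to some $v_\infty$ with $\|v_\infty\|_{L^\infty(B_1)}\ge c$ and $\|v_\infty\|_{L^\infty(B_R)}\le CR^{s+\alpha}$.

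**The Liouville step.** The right-hand sides $f_j$ rescale to something of size $\rho_0^{k_j\alpha}d^{\alpha-s}\to 0$ in the appropriate weighted sense, and the operators $L_j$ converge (up to subsequence) to some $L_\infty$ in $\mathcal L_*$; hence the limit satisfies $L_\infty v_\infty=0$ in $\{x\cdot e>0\}$ and $v_\infty=0$ in $\{x\cdot e\le 0\}$, with subquadratic-type growth $|v_\infty(x)|\le C(1+|x|^{s+\alpha})$ and $s+\alpha<2s<2$… but more importantly $s+\alpha<s+s$ is not what matters; what matters is the Liouville theorem in a half-space for solutions vanishing outside it with growth strictly below $s+1$ (or the appropriate threshold). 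By the one-dimensional/half-space Liouville theorem (as in \cite[Section 2]{RS-K} and \cite{RS-stable}), any such $v_\infty$ must be of the form $Q_\infty(x\cdot e)_+^s$ for some constant $Q_\infty$. But then, for $\rho_0$ chosen small enough at the outset, $\sup_{B_{\rho_0}}|v_\infty-Q_\infty(x\cdot e)_+^s|=0<\tfrac12\rho_0^{s+\alpha}$, and transferring this back to large $j$ (using the locally uniform convergence and $\psi_j^s\to(x\cdot e)_+^s$) contradicts the assumed failure of the improvement at scale $\rho_0^{k_j}$. This closes the contradiction and proves the iterative step, hence the proposition.

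**The main obstacle** I expect is the compactness/convergence near the moving boundary: one must show the blow-ups $v_j$ are precompact up to $\partial\Omega_j$, not just on interior compacts, so that the limit genuinely vanishes on $\{x\cdot e\le 0\}$ and inherits the growth bound. This is exactly where the barrier $\phi_1$ of Lemma \ref{supersol} (equivalently the bound $|L(\psi^s)|\le Cd^{\alpha-s}$ of Proposition \ref{lem11}) is used — it gives $|v_j|\le C\,d^s$ near the boundary uniformly in $j$, which both prevents mass from escaping to the boundary and, combined with interior $C^s$ estimates exactly as in the proof of Proposition \ref{prop-Cs-2}, upgrades local uniform convergence to a form strong enough to pass the boundary condition and the growth control to the limit. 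The secondary technical point is the careful bookkeeping that turns ``estimate holds at all coarser scales with near-optimal constants'' into the clean growth bound $\|v_j\|_{L^\infty(B_R)}\le CR^{s+\alpha}$ — routine, but it must be set up correctly so that the Liouville classification applies.
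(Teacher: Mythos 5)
Your proposal is correct in spirit and proves the right statement, but it follows a genuinely different route from the paper, so let me compare the two.

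The paper's proof does \emph{not} run a discrete improvement-of-flatness iteration. Instead it defines the approximating coefficient at scale $r$ as the $L^2$-projection $Q_*(r)=\int_{B_r}u\psi^s / \int_{B_r}\psi^{2s}$, reduces (via Lemma \ref{lem-blow-up-seq}) to bounding $r^{-s-\alpha}\|u-Q_*(r)\psi^s\|_{L^\infty(B_r)}$ uniformly over all $r\in(0,1)$ and over the contradiction sequence, introduces the monotone tail quantity $\theta(r)=\sup_k\sup_{r'>r}(r')^{-s-\alpha}\|u_k-\phi_{k,r'}\|_{L^\infty(B_{r'})}$, and blows up at a scale $r_m$ where this near-saturates. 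The decisive payoff of the $L^2$-projection is the orthogonality $\int_{B_1}v_m(x)\psi_{k_m}^s(r_m x)\,dx=0$, which passes to the limit as $\int_{B_1}v(x)(x_n)_+^s\,dx=0$. Combined with the Liouville classification $v=\kappa(x_n)_+^s$, this forces $\kappa=0$, i.e.\ $v\equiv 0$, directly contradicting the nondegeneracy $\|v_m\|_{L^\infty(B_1)}\ge\tfrac12$ inherited from the near-maximality of $\theta$. Your proposal instead works at dyadic scales $\rho_0^k$, does not exploit any orthogonality, and closes the contradiction by ``transferring back'': since $v_\infty=Q_\infty(x\cdot e)_+^s$ is exactly a multiple of the limiting profile, for $j$ large one can correct the centering constant and beat the assumed failure of the estimate at scale $\rho_0^{k_j}$. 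Both strategies use the same three pillars in the same way — the barrier estimate $|L(\psi^s)|\le Cd^{\alpha-s}$ from Proposition~\ref{lem11}, the uniform $C^s$ compactness from Proposition~\ref{prop-Cs-2}, and the half-space Liouville theorem — but the paper's version is a bit more economical because the orthogonality kills the limit outright, whereas your version requires carefully unwinding the locally uniform convergence of both $v_j$ and the rescaled $\psi_j^s$ to the scale $\rho_0^{k_j}$.

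Two small imprecisions in your write-up are worth flagging, though neither is fatal. First, the coefficient increments should satisfy $|Q_k-Q_{k+1}|\le C_*\rho_0^{k\alpha}$, not $C_*\rho_0^{k(s+\alpha)}$: on $B_{\rho_0^{k+1}}$ one has $\sup\psi^s\sim\rho_0^{(k+1)s}$, so comparing the estimates at scales $k$ and $k{+}1$ cancels the factor $\rho_0^{ks}$ and leaves $\rho_0^{k\alpha}$ (still geometrically summable, so the conclusion $|Q|\le C$ and the growth control $\|v_j\|_{L^\infty(B_R)}\le CR^{s+\alpha}$ survive). Second, your blow-up is written with $Q_{k_j}^{(j)}$ and normalizer $\rho_0^{k_j(s+\alpha)}$ while you also assert $\|v_j\|_{L^\infty(B_1)}\ge 1$ ``from the failure'' and later contradict the failure on $B_{\rho_0}$; these two scale conventions are off by one. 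Either blow up at the last good scale $\rho_0^{k_j-1}$ (so $\|v_j\|_{L^\infty(B_1)}\le 1$ and the contradiction is extracted on $B_{\rho_0}$), or blow up at the failing scale $\rho_0^{k_j}$ (so $\|v_j\|_{L^\infty(B_1)}>1$ for every centering constant and the contradiction is extracted on $B_1$, which requires equicontinuity up to $\overline{B_1}$, obtainable by applying Proposition~\ref{prop-Cs-2} on a slightly larger ball). Once the indexing is made consistent, your argument goes through.
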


We will need the following technical lemma.

\begin{lem}\label{lem-blow-up-seq}
Let $\Omega$, $\psi$, and $u$ be as in Proposition \ref{prop-bdry-reg}, and define
\begin{equation}\label{def-phi}
\phi_r(x):=Q_*(r)\psi^s(x),
\end{equation}
where
\[Q_*(r):=\arg\min_{Q\in\R}\int_{B_r}\bigl(u-Q\psi^2\bigr)^2dx=\frac{\int_{B_r}u\psi^s}{\int_{B_r}\psi^{2s}dx}.\]

Assume that for all $r\in(0,1)$ we have
\begin{equation}\label{5.2}
\|u-\phi_r\|_{L^\infty(B_r)}\leq C_0r^{s+\alpha}.
\end{equation}
Then, there is $Q\in \R$ satisfying $|Q|\leq C(C_0+\|u\|_{L^\infty(B_1)})$ such that
\[\|u-Q\psi^s\|_{L^\infty(B_r)}\leq CC_0r^{s+\alpha},\]
for some constant $C$ depending only on $s$ and $\alpha$.
\end{lem}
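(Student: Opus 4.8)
The plan is to show that the family of ``best-fit coefficients'' $Q_*(r)$ converges as $r\to 0$, and that its limit $Q$ does the job. The key observation is that consecutive coefficients $Q_*(r)$ and $Q_*(2r)$ cannot differ by much, because both $\phi_r$ and $\phi_{2r}$ are close to $u$ on $B_r$ by hypothesis \eqref{5.2}, hence close to each other there.

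First I would make this quantitative. On $B_r$ we have, by the triangle inequality and \eqref{5.2} applied at radii $r$ and $2r$,
\[\|\phi_r-\phi_{2r}\|_{L^\infty(B_r)}\leq \|u-\phi_r\|_{L^\infty(B_r)}+\|u-\phi_{2r}\|_{L^\infty(B_r)}\leq C_0r^{s+\alpha}+C_0(2r)^{s+\alpha}\leq CC_0 r^{s+\alpha}.\]
Since $\phi_r-\phi_{2r}=(Q_*(r)-Q_*(2r))\psi^s$ and $\psi^s\geq c\,d^s$, choosing a point $x_r\in B_r$ with $d(x_r)\geq cr$ (which exists because $0\in\partial\Omega$ and $\partial\Omega\cap B_1$ is a $C^{1,\alpha}$ graph of bounded norm, so $\Omega\cap B_r$ contains an interior ball of radius comparable to $r$) gives $\psi^s(x_r)\geq c r^s$, and therefore
\[|Q_*(r)-Q_*(2r)|\leq \frac{CC_0 r^{s+\alpha}}{cr^s}=CC_0 r^\alpha.\]
Applying this with $r=2^{-k}$ and summing the geometric series $\sum_k 2^{-k\alpha}<\infty$ shows that $\{Q_*(2^{-k})\}_k$ is Cauchy, hence converges to some limit $Q$; the same telescoping estimate gives $|Q-Q_*(2^{-k})|\leq CC_0 2^{-k\alpha}$, and interpolating over dyadic scales, $|Q-Q_*(r)|\leq CC_0 r^\alpha$ for all $r\in(0,1)$. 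The bound on $|Q|$ follows from $|Q|\leq |Q_*(1)|+CC_0$ together with the trivial estimate $|Q_*(1)|=\bigl|\int_{B_1}u\psi^s\big/\int_{B_1}\psi^{2s}\bigr|\leq C\|u\|_{L^\infty(B_1)}$.

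Finally I would assemble the conclusion: for any $r\in(0,1)$ and $x\in B_r$,
\[|u(x)-Q\psi^s(x)|\leq |u(x)-\phi_r(x)|+|Q_*(r)-Q|\,\psi^s(x)\leq C_0 r^{s+\alpha}+CC_0 r^\alpha\cdot Cr^s\leq CC_0 r^{s+\alpha},\]
where I used $\psi^s\leq Cd^s\leq C|x|^s\leq Cr^s$ on $B_r$. This is exactly the desired estimate $\|u-Q\psi^s\|_{L^\infty(B_r)}\leq CC_0 r^{s+\alpha}$. I expect the only mildly delicate point to be the lower bound $\psi^s(x_r)\geq cr^s$ at a well-chosen point of $B_r$: one must use the interior cone/ball property coming from the $C^{1,\alpha}$ graph hypothesis to guarantee $B_r\cap\Omega$ reaches distance $\gtrsim r$ from $\partial\Omega$, and then \eqref{psi-d}; everything else is the standard dyadic-Cauchy-sequence argument. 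Note the constant $C$ ends up depending only on $s$ and $\alpha$ (and the fixed geometric constants of $\psi$), as claimed.
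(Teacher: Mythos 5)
Your proof is correct and follows essentially the same approach as the paper: bound $\|\phi_r-\phi_{2r}\|_{L^\infty(B_r)}$ via the triangle inequality, convert it to $|Q_*(r)-Q_*(2r)|\lesssim C_0 r^\alpha$ using $\sup_{B_r}\psi^s\sim r^s$, sum the dyadic telescoping series to obtain $Q$ with $|Q-Q_*(r)|\lesssim C_0 r^\alpha$, and conclude by one more triangle inequality. Your explicit justification that $B_r\cap\Omega$ reaches distance $\gtrsim r$ from $\partial\Omega$ (so that some $x_r\in B_r$ has $\psi^s(x_r)\geq cr^s$) is the same fact the paper invokes, just spelled out a bit more carefully.
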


\begin{proof}
The proof is analogue to that of \cite[Lemma 5.3]{RS-stable}.

First, we may assume $C_0+\|u\|_{L^\infty(B_1)}=1$.
Then, by \eqref{5.2}, for all $x\in B_r$ we have
\[|\phi_{2r}(x)-\phi_r(x)|\leq |u(x)-\phi_{2r}(x)|+|u(x)-\phi_r(x)|\leq Cr^{s+\alpha}.\]
This, combined with $\sup_{B_r}\psi^s=cr^s$, gives
\[|Q_*(2r)-Q_*(r)|\leq Cr^\alpha.\]
Moreover, we have $|Q_*(1)|\leq C$, and thus there exists the limit $Q=\lim_{r\downarrow0}Q_*(r)$.
Furthermore,
\[|Q-Q_*(r)|\leq \sum_{k\geq0}|Q_*(2^{-k}r)-Q_*(2^{-k-1}r)|\leq \sum_{k\geq0}C2^{-m\alpha}r^\alpha\leq Cr^\alpha.\]
In particular, $|Q|\leq C$.

Therefore, we finally find
\[\|u-Q\psi^s\|_{L^\infty(B_r)}\leq \|u-Q_*(r)\psi^s\|_{L^\infty(B_r)}+Cr^s|Q_*(r)-Q|\leq Cr^{s+\alpha},\]
and the lemma is proved.
\end{proof}

We now give the:

\begin{proof}[Proof of Proposition \ref{prop-bdry-reg}]
The proof is by contradiction, and uses several ideas from \cite[Section 5]{RS-stable}.

First, dividing by a constant we may assume $K_0=1$.
Also, after a rotation we may assume that the unit (outward) normal vector to $\partial\Omega$ at $0$ is $\nu=-e_n$.

Assume the estimate is not true, i.e., there are sequences $\Omega_k$, $L_k$, $f_k$, $u_k$, for which:
\begin{itemize}
\item $\Omega_k$ is a $C^{1,\alpha}$ domain that can be represented as the graph of a $C^{1,\alpha}$ function with norm is less or equal than 1;
\item $0\in \partial\Omega_k$ and the unit normal vector to $\partial\Omega_k$ at $0$ is $-e_n$;
\item $L_k$ is of the form \eqref{operator-L}-\eqref{ellipt-const};
\item $\|d^{s-\alpha}f_k\|_{L^\infty(B_1\cap\Omega)}+\|u_k\|_{L^\infty(\R^n)}\leq 1$;
\item For any constant $Q$, $\sup_{r>0}\sup_{B_r}r^{-s-\alpha}|u_k-Q\psi_k^s|=\infty$.
\end{itemize}
Then, by Lemma \ref{lem-blow-up-seq} we will have
\[\sup_k\sup_{r>0}\|u_k-\phi_{k,r}\|_{L^\infty(B_r)}=\infty,\]
where
\[\phi_{k,r}(x)=Q_k(r)\psi_k^s,\qquad Q_k(r)=\frac{\int_{B_r}u_k\psi_k^s}{\int_{B_r}\psi_k^{2s}}.\]

We now define the monotone quantity
\[\theta(r):=\sup_k\sup_{r'>r}(r')^{-s-\alpha}\|u_k-\phi_{k,r'}\|_{L^\infty(B_{r'})},\]
which satisfies $\theta(r)\to\infty$ as $r\to0$.
Hence, there are sequences $r_m\to0$ and $k_m$, such that
\begin{equation}\label{theta-nondeg}
(r_m)^{-s-\alpha}\|u_{k_m}-\phi_{k_m,r_m}\|_{L^\infty(B_{r_m})}\geq \frac12\theta(r_m).
\end{equation}

Let us now denote $\phi_m=\phi_{k_m,r_m}$ and define
\[v_m(x):=\frac{u_{k_m}(r_mx)-\phi_m(r_mx)}{(r_m)^{s+\alpha}\theta(r_m)}.\]
Note that
\begin{equation}\label{vm-orth}
\int_{B_1} v_m(x)\psi_k^s(r_mx)dx=0,
\end{equation}
and also
\begin{equation}\label{vm-nondeg}
\|v_m\|_{L^\infty(B_1)}\geq\frac12,
\end{equation}
which follows from \eqref{theta-nondeg}.

With the same argument as in the proof of Lemma \ref{lem-blow-up-seq}, one finds
\[|Q_{k_m}(2r)-Q_{k_m}(r)|\leq Cr^\alpha\theta(r).\]
Then, by summing a geometric series this yields
\[|Q_{k_m}(rR)-Q_{k_m}(r)|\leq Cr^\alpha\theta(r) R^\alpha\]
for all $R\geq1$ (see \cite{RS-stable}).

The previous inequality, combined with
\[\|u_m-Q_{k_m}(r_mR)\psi_{k_m}^s\|_{L^\infty(B_{r_mR})}\leq (r_mR)^{s+\alpha}\theta(r_mR)\]
(which follows from the definition of $\theta$), gives
\begin{equation}\label{vm-growth}\begin{split}
\|v_m\|_{L^\infty(B_R)}&=\frac{1}{(r_m)^{s+\alpha}\theta(r_m)}\|u_m-Q_{k_m}(r_m)\psi_{k_m}^s\|_{L^\infty(B_{r_mR})}\\
&\leq \frac{(r_mR)^{s+\alpha}\theta(r_mR)}{(r_m)^{s+\alpha}\theta(r_m)}+
\frac{C(r_mR)^s}{(r_m)^{s+\alpha}\theta(r_m)}|Q_{k_m}(r_mR)-Q_{k_m}(r_m)|\\
&\leq CR^{s+\alpha}
\end{split}\end{equation}
for all $R\geq1$.
Here we used that $\theta(r_mR)\leq \theta(r_m)$ if $R\geq1$.

Now, the functions $v_m$ satisfy
\[L_m v_m(x)=\frac{(r_m)^{2s}}{(r_m)^{s+\alpha}\theta(r_m)}\,f_{k_m}(r_m x)-\frac{(r_m)^{2s}}{(r_m)^{s+\alpha}\theta(r_m)}\,(L\psi_{k_m})(r_m x)\]
in $(r_m^{-1}\Omega_{k_m})\cap B_{r_m^{-1}}$.
Since $\alpha<s$, and using Proposition \ref{lem11}, we find
\[|L_m v_m|\leq \frac{C}{\theta(r_m)}\,(r_m)^{s-\alpha}d_{k_m}^{\alpha-s}(r_mx) \qquad \textrm{in}\quad (r_m^{-1}\Omega_{k_m})\cap B_{r_m^{-1}}.\]
Thus, denoting $\Omega_m=r_m^{-1}\Omega_{k_m}$ and $d_m(x)={\rm dist}(x,r_m^{-1}\Omega_{k_m})$, we have
\begin{equation}\label{vm-eq}
|L_m v_m|\leq \frac{C}{\theta(r_m)}\,d_m^{\alpha-s}(x) \qquad \textrm{in}\quad \Omega_m\cap B_{r_m^{-1}}.
\end{equation}
Notice that the domains $\Omega_m$ converge locally uniformly to $\{x_n>0\}$ as $m\to\infty$.

Next, by Proposition \ref{prop-Cs-2}, we find that for each fixed $M\geq1$
\[\|v_m\|_{C^s(B_M)}\leq C(M)\qquad\textrm{for all}\ m\ \textrm{with}\ r_m^{-1}>2M.\]
The constant $C(M)$ does not depend on $m$.
Hence, by Arzel\`a-Ascoli theorem, a subsequence of $v_m$ converges locally uniformly to a function $v\in C(\R^n)$.

In addition, there is a subsequence of operators $L_{k_m}$ which converges weakly to some operator $L$ of the form \eqref{operator-L}-\eqref{ellipt-const} (see Lemma 3.1 in \cite{RS-stable}).
Hence, for any fixed $K\subset\subset \{x_n>0\}$, thanks to the growth condition \eqref{vm-growth} and since $v_m\rightarrow v$ locally uniformly, we can pass to the limit the equation \eqref{vm-eq} to get
\[Lv=0\quad\textrm{in}\ K.\]
Here we used that the domains $\Omega_m$ converge uniformly to $\{x_n>0\}$, so that for $m$ large enough we will have $K\subset\Omega_m\cap B_{r_m^{-1}}$.
We also used that, in $K$, the right hand side in \eqref{vm-eq} converges uniformly to $0$.

Since this can be done for any $K\subset\subset \{x_n>0\}$, we find
\[Lv=0\quad\textrm{in}\ \{x_n>0\}.\]
Moreover, we also have $v=0$ in $\{x_n\leq 0\}$, and $v\in C(\R^n)$.

Thus, by the classification result \cite[Theorem 4.1]{RS-stable}, we find
\begin{equation}\label{v-eq-final}
v(x)=\kappa(x_n)_+^s
\end{equation}
for some $\kappa\in\R$.

Now, notice that, up to a subsequence, $r_m^{-1}\psi_{k_m}(r_mx)\rightarrow c_1(x_n)_+$ uniformly, with $c_1>0$.
This follows from the fact that $\psi_{k_m}$ are $C^{1,\alpha}(\overline\Omega_{k_m})$ (uniformly in $m$) and that $0<c_0d_{k_m}\leq \psi_{k_m}\leq C_0d_{k_m}$.

Then, multiplying \eqref{vm-orth} by $(r_m)^{-s}$ and passing to the limit, we find
\[\int_{B_1}v(x)(x_n)_+^sdx=0.\]
This means that $\kappa=0$ in \eqref{v-eq-final}, and therefore $v\equiv0$.
Finally, passing to the limit \eqref{vm-nondeg} we find a contradiction, and thus the proposition is proved.
\end{proof}

We finally give the:

\begin{proof}[Proof of Theorem \ref{thm-bdry-reg}]
First, dividing by a constant if necessary, we may assume
\[\|f\|_{L^\infty(B_1\cap\Omega)}+\|u\|_{L^\infty(\R^n)}\leq 1.\]
Second, by definition of $\psi$ we have $\psi/d \in C^{\alpha}(\overline\Omega\cap B_{1/2})$ and
\[\|\psi^s/d^s\|_{C^\alpha(\overline\Omega\cap B_{1/2})}\leq C.\]
Thus, it suffices to show that
\begin{equation}\label{tururu1}
\|u/\psi^s\|_{C^\alpha(\overline\Omega\cap B_{1/2})}\leq C.
\end{equation}

To prove \eqref{tururu1}, let $x_0\in \Omega\cap B_{1/2}$ and $2r=d(x_0)$.
Then, by Proposition \ref{prop-bdry-reg} there is $Q=Q(x_0)$ such that
\begin{equation}\label{tururu2}
\|u-Q\psi^s\|_{L^\infty(B_r(x_0))}\leq Cr^{s+\alpha}.
\end{equation}
Moreover, by rescaling and using interior estimates, we get
\begin{equation}\label{tururu3}
\|u-Q\psi^s\|_{C^\alpha(B_r(x_0))}\leq Cr^s.
\end{equation}
Finally, \eqref{tururu2}-\eqref{tururu3} yield \eqref{tururu1}, exactly as in the proof of Theorem 1.2 in \cite{RS-stable}.
\end{proof}

\begin{rem}
Notice that, thanks to Proposition \ref{prop-bdry-reg}, we have that Theorem \ref{thm-bdry-reg} holds for all right hand sides satisfying $|f(x)|\leq Cd^{\alpha-s}$ in $\Omega$.
\end{rem}

\subsection{Equations with bounded measurable coefficients}

We prove now Theorem \ref{thm-bdry-reg-2}.

First, we show the following $C^\alpha$ estimate up to the boundary.

\begin{prop}\label{prop-Cs-2-bdd}
Let $s\in(0,1)$, and $\Omega$ be any bounded $C^{1,\alpha}$ domain.

Let $u$ be a solution to
\begin{equation}\label{pb-3}
\left\{ \begin{array}{rcll}
M^+u &\geq&-K_0d^{\epsilon-s}&\textrm{in }B_1\cap \Omega \\
M^-u &\leq&K_0d^{\epsilon-s}&\textrm{in }B_1\cap \Omega \\
u&=&0&\textrm{in }B_1\setminus\Omega.
\end{array}\right.\end{equation}
Then,
\[\|u\|_{C^{\bar\alpha}(B_{1/2})}\leq C\left(K_0+\sup_{R\geq1}R^{\delta-2s}\|u\|_{L^\infty(B_R)}\right).\]
The constant $C$ depends only on $n$, $s$, $\epsilon$, $\delta$, $\Omega$, and ellipticity constants.
\end{prop}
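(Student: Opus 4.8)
The plan is to follow the same scheme as the proof of Proposition \ref{prop-Cs-2}, replacing the linear maximum principle with the comparison principle for the extremal operators $M^\pm$ and replacing the interior $C^s$ estimate of \cite{RS-stable} with the interior $C^{\bar\alpha}$ estimate of \cite[Proposition 5.1]{RS-K}. First I would normalize so that $K_0+\sup_{R\geq 1}R^{\delta-2s}\|u\|_{L^\infty(B_R)}\le 1$, and truncate: set $w=u\chi_{B_1}$. Since the tails of $u$ outside $B_1$ contribute a bounded error (controlled by the growth condition with $\delta<2s$), the truncated function satisfies $M^+w\geq -C\tilde d^{\epsilon-s}$ and $M^-w\leq C\tilde d^{\epsilon-s}$ in $\Omega\cap B_{3/4}$, with $w\equiv 0$ outside $B_1$ and $\|w\|_{L^\infty}\le 1$, where $\tilde d$ refers to an auxiliary enlarged $C^{1,\alpha}$ domain $\widetilde\Omega$ chosen exactly as in Proposition \ref{prop-Cs-2} (so that $B_1\cap\Omega\subset\widetilde\Omega$, $B_{1/2}\cap\partial\Omega\subset\partial\widetilde\Omega$, and $\widetilde\Omega$ stays away from $\partial\Omega$ in $\Omega\cap(B_1\setminus B_{3/4})$).

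The second step is the barrier comparison giving $|w|\le Cd^s$ in $B_{1/2}$. Here I would use the supersolution $\phi_1$ from Lemma \ref{supersol-sing-RHS}, applied in the enlarged domain $\widetilde\Omega$, which satisfies $L\phi_1\le -\tilde d^{\epsilon-s}$ in $\widetilde\Omega\cap\{\tilde d\le\rho_0\}$ for \emph{every} $L\in\mathcal L_*$; hence $M^+\phi_1\le -\tilde d^{\epsilon-s}$ there, and $c\tilde d^s\le\phi_1\le C\tilde d^s$. Taking $\varphi=C\phi_1$ with $C$ large, one gets $M^+\varphi\le -Cd^{\epsilon-s}$ in $\Omega\cap B_{1/2}\cap\{d\le\rho_0\}$, while $\varphi\ge 1\ge w$ on the rest of the relevant region (in $\Omega\cap(B_1\setminus B_{3/4})$ and in $\Omega\cap B_{1/2}\cap\{d\ge\rho_0\}$), and $\varphi\ge 0$ everywhere. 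Since $M^+(w-\varphi)\ge M^-w-M^+\varphi\ge Cd^{\epsilon-s}-Cd^{\epsilon-s}\ge 0$ (choosing the constants consistently) in $\Omega\cap B_{1/2}\cap\{d\le\rho_0\}$, and $w-\varphi\le 0$ outside that set, the comparison principle for $M^+$ yields $w\le\varphi\le Cd^s$; replacing $w$ by $-w$ and using $M^-$ gives $|w|\le Cd^s$ in $B_{1/2}$.

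The third step is to upgrade this bound to the $C^{\bar\alpha}$ estimate by combining $|w|\le Cd^s$ with the interior regularity estimate of \cite[Proposition 5.1]{RS-K} for solutions of $M^+w\ge -g$, $M^-w\le g$. On a ball $B_r(x_0)\subset\Omega\cap B_{1/2}$ with $2r=d(x_0)$, the interior estimate gives $r^{\bar\alpha}[w]_{C^{\bar\alpha}(B_r(x_0))}\le C(r^{2s}\|g\|_{L^\infty(B_{2r}(x_0))}+\sup_{R\ge 1}R^{\delta-2s}\|w\|_{L^\infty(B_{rR}(x_0))})$; here $g\le Cd^{\epsilon-s}\le Cr^{\epsilon-s}$ on $B_{2r}(x_0)$ so $r^{2s}\|g\|_{L^\infty}\le Cr^{s+\epsilon}$, and (taking $\delta=s$, say) $R^{-s}\|w\|_{L^\infty(B_{rR}(x_0))}\le Cr^s$ by $|w|\le Cd^s$. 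This gives $[w]_{C^{\bar\alpha}(B_r(x_0))}\le Cr^{s-\bar\alpha}$, hence in particular $\le C$ since $\bar\alpha<s$ (we may assume $\bar\alpha\le s$, shrinking it if needed). Then the standard patching argument — exactly as at the end of the proof of Proposition \ref{prop-Cs-2}, splitting $x,y\in B_{1/2}$ according to whether $2\min\{d(x),d(y)\}$ is bigger or smaller than $|x-y|$, and using $|w|\le Cd^s$ in the first case and the interior seminorm bound in the second — yields $\|w\|_{C^{\bar\alpha}(B_{1/2})}\le C$, and therefore $\|u\|_{C^{\bar\alpha}(B_{1/2})}\le C$.

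The main obstacle I expect is purely bookkeeping: one must make sure the singular right-hand side $d^{\epsilon-s}$ is handled consistently through the truncation (the enlarged domain $\widetilde\Omega$ is precisely what allows $\tilde d^{\epsilon-s}$ to stay bounded on $\Omega\cap(B_1\setminus B_{3/4})$), and that the comparison principle is applied only in the thin collar $\{d\le\rho_0\}$ where the barrier is a strict supersolution. There is no genuinely new difficulty compared to Proposition \ref{prop-Cs-2}: the extremal operators satisfy a comparison principle, Lemma \ref{supersol-sing-RHS} produces barriers that work for the whole class $\mathcal L_*$, and \cite[Proposition 5.1]{RS-K} supplies the needed interior estimate in the scaled form. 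Finally, Proposition \ref{prop-Cs-2-bdd} will be combined with a blow-up and compactness argument paralleling the proof of Proposition \ref{prop-bdry-reg} (using the classification of global solutions in a half-space for the extremal operators) to deduce Theorem \ref{thm-bdry-reg-2}.
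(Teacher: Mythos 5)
Your proof follows essentially the same scheme as the paper's: truncate, use the barrier from the lemma on supersolutions to get $|w|\le Cd^s$, then combine with the interior H\"older estimate for the Pucci extremal operators and the standard patching argument. However, there is a sign slip in the comparison step that, as written, does not follow from the hypotheses, together with a mis-citation worth flagging.

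In the barrier comparison you write $M^+(w-\varphi)\ge M^-w-M^+\varphi\ge Cd^{\epsilon-s}-Cd^{\epsilon-s}\ge 0$. The first inequality is true (since $M^-w\le M^+w$), but the second does not follow: the hypotheses give an \emph{upper} bound $M^-w\le K_0d^{\epsilon-s}$, not a lower bound on $M^-w$, so the term $M^-w$ cannot be bounded below. The correct subadditivity inequality to use is
\[
M^+(w-\varphi)\ \ge\ M^+w + M^-(-\varphi)\ =\ M^+w - M^+\varphi\ \ge\ -K_0\tilde d^{\epsilon-s}+C\tilde d^{\epsilon-s}\ \ge\ 0
\]
for $C$ large enough, which uses precisely the lower bound $M^+w\ge -K_0\tilde d^{\epsilon-s}$ that the hypotheses supply. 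This is a one-line fix and does not affect the architecture of your argument, but as stated the step would not be valid. Separately, the interior $C^{\bar\alpha}$ estimate you invoke should be attributed to the interior H\"older estimate of Caffarelli--Silvestre (cited in the paper as \cite{CS}), not to \cite[Proposition 5.1]{RS-K}, which is the Liouville-type classification theorem (the source of the exponent $\bar\alpha$, but not of the interior estimate itself); the paper's own proof uses \cite{CS} at this step.
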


\begin{proof}
The proof is very similar to that of Proposition \ref{prop-Cs-2-bdd}.

First, using the supersolution given by Lemma \ref{supersol}, and by the exact same argument of Proposition \ref{prop-Cs-2-bdd}, we find
\[|w|\leq Cd^s\qquad \textrm{in}\quad B_{1/2}.\]
Now, using the interior estimates of \cite{CS} one finds
\[[w]_{C^{\bar\alpha}(B_r(x_0))}\leq C\]
for all balls $B_r(x_0)\subset \Omega\cap B_{1/2}$ with $2r=d(x_0)$, and this yields
\[\|w\|_{C^{\bar\alpha}(B_{1/2})}\leq C,\]
as desired.
\end{proof}

We next show:

\begin{prop}\label{prop-bdry-reg-bdd}
Let $s\in(0,1)$ and $\alpha\in(0,\bar\alpha)$.
Let $L$ be any operator of the form \eqref{operator-L}-\eqref{ellipt-const}, $\Omega$ be any $C^{1,\alpha}$ domain, and $\psi$ be given by Definition \ref{d-is-psi}.

Assume that $0\in \partial\Omega$, and that $\partial\Omega\cap B_1$ can be represented as the graph of a $C^{1,\alpha}$ function with norm less or equal than~1.

Let $u$ be any solution to \eqref{pb-2}, and let
\[K_0=\|f\|_{L^\infty(B_1\cap\Omega)}+\|u\|_{L^\infty(\R^n)}.\]
Then, there exists a constant $Q$ satisfying $|Q|\leq CK_0$ and
\[\bigl|u(x)-Q\psi^s(x)\bigr|\leq CK_0|x|^{s+\alpha}.\]
The constant $C$ depends only on $n$, $s$, and ellipticity constants.
\end{prop}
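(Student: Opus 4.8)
The plan is to run the same blow-up and compactness scheme as in the proof of Proposition \ref{prop-bdry-reg}, with two changes dictated by the fact that we now deal with viscosity solutions of \eqref{pb-2}: in the limit, the linear equation $Lv=0$ is replaced by the pair of extremal inequalities $M^+v\geq0$, $M^-v\leq0$, and the classification result \cite[Theorem 4.1]{RS-stable} is replaced by the Liouville theorem \cite[Proposition 5.1]{RS-K} for the extremal operators in a half-space (this is precisely where the exponent $\bar\alpha$ enters). As before, dividing by a constant we may assume $K_0=1$, and after a rotation that $0\in\partial\Omega$ with inner normal $e_n$.

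First, I would note that the technical Lemma \ref{lem-blow-up-seq} applies here verbatim, since its statement and proof involve only $u$, $\psi$, and the coefficients $Q_*(r)$, and never use the equation. Hence it suffices to rule out the existence of sequences $\Omega_k$ (uniformly $C^{1,\alpha}$ with $0\in\partial\Omega_k$ and inner normal $e_n$), $u_k$ solving \eqref{pb-2} with $K_0=1$ and $\|u_k\|_{L^\infty(\R^n)}\leq1$, such that $\sup_k\sup_{r>0}\|u_k-\phi_{k,r}\|_{L^\infty(B_r)}=\infty$. One then introduces the monotone quantity $\theta(r)$, picks sequences $r_m\to0$, $k_m$ as in \eqref{theta-nondeg}, and defines the normalized blow-up $v_m(x)=\bigl(u_{k_m}(r_mx)-\phi_m(r_mx)\bigr)/\bigl(r_m^{s+\alpha}\theta(r_m)\bigr)$ exactly as before, so that $v_m$ satisfies the orthogonality relation \eqref{vm-orth}, the nondegeneracy \eqref{vm-nondeg}, and the growth bound \eqref{vm-growth}, i.e. $\|v_m\|_{L^\infty(B_R)}\leq CR^{s+\alpha}$ for all $R\geq1$.

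Next I would record the equations for $v_m$. Since $M^\pm$ are positively $1$-homogeneous and the generator is $2s$-homogeneous, subtracting $Q_{k_m}(r_m)\psi_{k_m}^s$, rescaling, and using that $|M^\pm(\psi^s)|\leq Cd^{\alpha-s}$ (which follows from Proposition \ref{lem11} applied to each $L\in\mathcal L_*$, the bound being uniform and hence surviving the sup and the inf), one gets
\[M^+v_m\geq -\frac{C}{\theta(r_m)}d_m^{\alpha-s},\qquad M^-v_m\leq \frac{C}{\theta(r_m)}d_m^{\alpha-s}\qquad\textrm{in}\quad \Omega_m\cap B_{r_m^{-1}},\]
where $\Omega_m=r_m^{-1}\Omega_{k_m}$ and $d_m(x)={\rm dist}(x,\R^n\setminus\Omega_m)$, with $\Omega_m\to\{x_n>0\}$ locally uniformly. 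By Proposition \ref{prop-Cs-2-bdd} (applied in rescaled balls, with the right-hand side $\theta(r_m)^{-1}d_m^{\alpha-s}$ and growth \eqref{vm-growth}) the $v_m$ are uniformly $C^{\bar\alpha}$ on compact sets, so up to a subsequence $v_m\to v$ locally uniformly, with $v\in C(\R^n)$, $v=0$ in $\{x_n\leq0\}$, and $|v(x)|\leq C(1+|x|)^{s+\alpha}$. Since $\theta(r_m)\to\infty$ the right-hand sides above converge to $0$ uniformly on compact subsets of $\{x_n>0\}$, so by stability of viscosity solutions under uniform convergence (using $\Omega_m\to\{x_n>0\}$) we obtain $M^+v\geq0$ and $M^-v\leq0$ in $\{x_n>0\}$. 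As $\alpha<\bar\alpha$, \cite[Proposition 5.1]{RS-K} forces $v(x)=\kappa(x_n)_+^s$ for some $\kappa\in\R$; passing \eqref{vm-orth} to the limit (using that $r_m^{-1}\psi_{k_m}(r_m\,\cdot)\to c_1(x_n)_+$ uniformly with $c_1>0$, exactly as in Proposition \ref{prop-bdry-reg}) gives $\int_{B_1}v\,(x_n)_+^s\,dx=0$, hence $\kappa=0$ and $v\equiv0$, contradicting \eqref{vm-nondeg}. The bound $|Q|\leq CK_0$ is provided by Lemma \ref{lem-blow-up-seq}.

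The only genuinely new point, as opposed to a transcription of the proof of Proposition \ref{prop-bdry-reg} with \cite{CS} replacing \cite{RS-stable} for interior estimates, is checking that the limit $v$ is a viscosity sub/supersolution of the extremal equations in the half-space: one must verify that the error terms $\theta(r_m)^{-1}d_m^{\alpha-s}$, although singular near $\partial\Omega_m$, are harmless since they are tested only against touching points in the open half-space and $\theta(r_m)\to\infty$, and that the moving domains $\Omega_m$ do not obstruct the stability argument. Everything else is routine.
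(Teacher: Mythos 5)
Your proposal is correct and follows essentially the same blow-up-and-compactness scheme as the paper's own proof: same use of Lemma \ref{lem-blow-up-seq}, same quantity $\theta(r)$, same normalized sequence $v_m$, the interior estimate Proposition \ref{prop-Cs-2-bdd} for compactness, stability of viscosity solutions to pass to the limit, and the Liouville theorem \cite[Proposition 5.1]{RS-K} to classify the limit, ending with the orthogonality-versus-nondegeneracy contradiction. The remarks you add about sub/superadditivity of $M^\pm$, the uniformity of the $|M^\pm(\psi^s)|\le Cd^{\alpha-s}$ bound over $\mathcal L_*$, and why the singular error $\theta(r_m)^{-1}d_m^{\alpha-s}$ is harmless on compacts are exactly the points the paper uses implicitly.
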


\begin{proof}
The proof is very similar to that of Proposition \ref{prop-bdry-reg}.

Assume by contradiction that we have $\Omega_k$ and $u_k$ such that:
\begin{itemize}
\item $\Omega_k$ is a $C^{1,\alpha}$ domain that can be represented as the graph of a $C^{1,\alpha}$ function with norm is less or equal than 1;
\item $0\in \partial\Omega_k$ and the unit normal vector to $\partial\Omega_k$ at $0$ is $-e_n$;
\item $u_k$ satisfies \eqref{pb-2} with $K_0=1$;
\item For any constant $Q$, $\sup_{r>0}\sup_{B_r}r^{-s-\alpha}|u_k-Q\psi_k^s|=\infty$.
\end{itemize}
Then, by Lemma \ref{lem-blow-up-seq} we will have
\[\sup_k\sup_{r>0}\|u_k-\phi_{k,r}\|_{L^\infty(B_r)}=\infty,\]
where
\[\phi_{k,r}(x)=Q_k(r)\psi_k^s,\qquad Q_k(r)=\frac{\int_{B_r}u_k\psi_k^s}{\int_{B_r}\psi_k^{2s}}.\]

We now define $\theta(r)$, $r_m\to0$, and $v_m$ as in the proof of Proposition \ref{prop-bdry-reg}.
Then, we have
\begin{equation}\label{vm-orth-bdd}
\int_{B_1} v_m(x)\psi_k^s(r_mx)dx=0,
\end{equation}
\begin{equation}\label{vm-nondeg-bdd}
\|v_m\|_{L^\infty(B_1)}\geq\frac12,
\end{equation}
and
\begin{equation}\label{vm-growth-bdd}
\|v_m\|_{L^\infty(B_R)}\leq CR^{s+\alpha}\qquad\textrm{for all}\quad R\geq1.
\end{equation}

Moreover, the functions $v_m$ satisfy
\[M^- v_m(x)\leq \frac{(r_m)^{2s}}{(r_m)^{s+\alpha}\theta(r_m)}+\frac{(r_m)^{2s}}{(r_m)^{s+\alpha}\theta(r_m)}\,(M^+\psi_{k_m})(r_m x)\]
in $(r_m^{-1}\Omega_{k_m})\cap B_{r_m^{-1}}$.
Using Lemma \ref{lem11}, and denoting $\Omega_m=r_m^{-1}\Omega_{k_m}$ and $d_m(x)={\rm dist}(x,r_m^{-1}\Omega_{k_m})$, we find
\begin{equation}\label{vm-eq-bdd}
M^- v_m\leq \frac{C}{\theta(r_m)}\,d_m^{\alpha-s}(x) \qquad \textrm{in}\quad \Omega_m\cap B_{r_m^{-1}}.
\end{equation}
Similarly, we find
\[M^+ v_m\geq -\frac{C}{\theta(r_m)}\,d_m^{\alpha-s}(x) \qquad \textrm{in}\quad \Omega_m\cap B_{r_m^{-1}}.\]
Notice that the domains $\Omega_m$ converge locally uniformly to $\{x_n>0\}$ as $m\to\infty$.

Next, by Proposition \ref{prop-Cs-2-bdd}, we find that for each fixed $M\geq1$
\[\|v_m\|_{C^{\bar\alpha}(B_M)}\leq C(M)\qquad\textrm{for all}\ m\ \textrm{with}\ r_m^{-1}>2M.\]
The constant $C(M)$ does not depend on $m$.
Hence, by Arzel\`a-Ascoli theorem, a subsequence of $v_m$ converges locally uniformly to a function $v\in C(\R^n)$.

Hence, passing to the limit the equation \eqref{vm-eq-bdd} we get
\[M^-v\leq 0\leq M^+v\quad\textrm{in}\ \{x_n>0\}.\]
Moreover, we also have $v=0$ in $\{x_n\leq 0\}$, and $v\in C(\R^n)$.

Thus, by the classification result \cite[Proposition 5.1]{RS-K}, we find
\begin{equation}\label{v-eq-final-bdd}
v(x)=\kappa(x_n)_+^s
\end{equation}
for some $\kappa\in\R$.
But passing \eqref{vm-orth-bdd} ---multiplied by $(r_m)^{-s}$--- to the limit, we find
\[\int_{B_1}v(x)(x_n)_+^sdx=0.\]
This means that $v\equiv0$, a contradiction with \eqref{vm-nondeg-bdd}.
\end{proof}

Finally, we give the:

\begin{proof}[Proof of Theorem \ref{thm-bdry-reg-2}]
The result follows from Proposition \ref{prop-bdry-reg-bdd}; see the proof of Theorem \ref{thm-bdry-reg}.
\end{proof}

\section{Barriers: $C^1$ domains}
\label{sec-barriers-C1}

We construct now sub and supersolutions that will be needed in the proof of Theorem \ref{thmC1}.
Recall that in $C^1$ domains one does not expect solutions to be comparable to $d^s$, and this is why the sub and supersolutions we construct have slightly different behaviors near the boundary.
Namely, they will be comparable to $d^{s+\epsilon}$ and $d^{s-\epsilon}$, respectively.

\begin{lem}\label{homog-subsol}
Let $s\in(0,1)$, and $e\in S^{n-1}$.
Define
\[\Phi_{\rm sub}(x) := \left( e\cdot x- \eta |x| \left(1- \frac{(e\cdot x)^2}{|x|^2} \right)\right)_+^{s+\epsilon}\]
and
\[\Phi_{\rm super}(x) := \left( e\cdot x + \eta |x| \left(1- \frac{(e\cdot x)^2}{|x|^2} \right)\right)_+^{s-\epsilon}\]
For every $\epsilon>0$ there is $\eta>0$ such that two functions $\Phi_{\rm sub}$ and $\Phi_{\rm super}$ satisfy, for all $L\in \mathcal L_{*}$,
\[\begin{cases}
L \Phi_{\rm sub} \ge c_\epsilon d^{\epsilon-s}>0 \quad & \mbox{in }\mathcal C_\eta  \\
\Phi_{\rm sub} = 0  \quad & \mbox{in }\R^n \setminus \mathcal C_\eta\\
\end{cases}\]
and
\[\begin{cases}
L \Phi_{\rm super} \le -c_\epsilon d^{-\epsilon-s}<0 \quad & \mbox{in }\mathcal C_{-\eta}  \\
\Phi_{\rm super} = 0  \quad & \mbox{in }\R^n \setminus \mathcal C_{-\eta}\\
\end{cases}\]
where $\mathcal C_{\pm\eta}$ is the cone
\[\mathcal C_{\pm\eta}: = \left\{ x \in \R^n\ :  e\cdot \frac{x}{|x|}    >  \pm \eta \left( 1 -  \left( e\cdot \frac{ x }{|x|} \right)^2\right) \right\}.\]
The constant $\eta$ depends only on $\epsilon$, $s$, and ellipticity constants.
\end{lem}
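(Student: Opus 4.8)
The plan is to treat $\Phi_{\rm sub}$ as a perturbation of the pure power of a half-space linear function and exploit the known exact computation for the latter. Write $H_\eta(x):=e\cdot x-\eta|x|\bigl(1-(e\cdot x)^2/|x|^2\bigr)$, so that $\Phi_{\rm sub}=(H_\eta)_+^{s+\epsilon}$ and $\mathcal C_\eta=\{H_\eta>0\}$. Note that $H_\eta$ is homogeneous of degree $1$ and $\Phi_{\rm sub}$ is homogeneous of degree $s+\epsilon$, so that $L\Phi_{\rm sub}$ is homogeneous of degree $s+\epsilon-2s=\epsilon-s$; since $d(x)=\mathrm{dist}(x,\R^n\setminus\mathcal C_\eta)$ is also homogeneous of degree $1$, it suffices by scaling to prove $L\Phi_{\rm sub}(x)\ge c_\epsilon$ at one point $x$ with $d(x)=1$ on the axis $x=te$, and to control $L\Phi_{\rm sub}$ uniformly on the unit sphere of $\mathcal C_\eta$. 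First I would recall from \cite{RS-K} (as used in Lemma \ref{lem12}) that for the linear profile $\ell(x)=(e\cdot x)_+$ one has $L(\ell^{s+\epsilon})(te)=\kappa(L,e)\,t^{\epsilon-s}$ with $\kappa\ge\kappa_0>0$ uniformly over $L\in\mathcal L_*$ and $e\in S^{n-1}$, by ellipticity; this is the positive main term we want to preserve.

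The heart of the argument is then a quantitative perturbation estimate: I would show that, for $x$ on the axis with $d(x)\sim 1$,
\[
\bigl|L\Phi_{\rm sub}(x)-L(\ell^{s+\epsilon})(x)\bigr|\le C\eta.
\]
To get this, compare $\Phi_{\rm sub}$ with $\ell^{s+\epsilon}$ region by region in the $y$-integral defining $L$, exactly in the style of the proof of Proposition \ref{lem11}: in a ball $B_{\rho/2}$ around $x$ (with $\rho=d(x)$) both functions are smooth and $H_\eta$ differs from $e\cdot x$ only by the quadratic-in-angle correction of size $O(\eta|x|)$, so $|\Phi_{\rm sub}-\ell^{s+\epsilon}|\le C\eta|y|^2$ there after differentiating twice; on $B_1\setminus B_{\rho/2}$ use $|a_+^{s+\epsilon}-b_+^{s+\epsilon}|\le C|a-b|(a_+^{s+\epsilon-1}+b_+^{s+\epsilon-1})$ together with $|H_\eta-e\cdot x|\le C\eta|y|$ and a Lemma \ref{lem15}-type bound to see the difference contributes $O(\eta)$; and on $\R^n\setminus B_1$ the tails are bounded by $C\eta$ because the two functions have the same growth $|y|^{s+\epsilon}$ and their difference is $O(\eta|y|^{s+\epsilon})$. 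Summing, the error is $O(\eta)$, so choosing $\eta$ small (depending only on $\epsilon$, $s$, $\lambda$, $\Lambda$) gives $L\Phi_{\rm sub}(x)\ge \tfrac12\kappa_0>0$ on the axis, hence $\ge c_\epsilon d^{\epsilon-s}$ everywhere in $\mathcal C_\eta$ by homogeneity; off the axis one checks that the same comparison still produces the lower bound, using that the distance function and the profile degenerate at the same rate near $\partial\mathcal C_\eta$.

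For $\Phi_{\rm super}$ the argument is entirely parallel, with two sign changes: the opening parameter is $+\eta$ instead of $-\eta$, so now $\mathcal C_{-\eta}\supset\{e\cdot x>0\}$ is a slightly \emph{wider} cone, and the exponent $s-\epsilon<s$ makes $L(\ell^{s-\epsilon})(te)=-\bar\kappa\,t^{-\epsilon-s}$ with $\bar\kappa\ge\bar\kappa_0>0$ uniformly in $L\in\mathcal L_*$ — this negativity is the standard fact that subcritical powers are strict supersolutions, again from \cite{RS-K}. The same three-region perturbation estimate gives $|L\Phi_{\rm super}-L(\ell^{s-\epsilon})|\le C\eta\, d^{-\epsilon-s}$, so for $\eta$ small we obtain $L\Phi_{\rm super}\le-\tfrac12\bar\kappa_0\, d^{-\epsilon-s}<0$ in $\mathcal C_{-\eta}$. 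The vanishing of $\Phi_{\rm sub}$ and $\Phi_{\rm super}$ outside their cones is immediate from the definition of the positive part.

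The step I expect to be the main obstacle is making the perturbation estimate genuinely uniform \emph{up to the lateral boundary} of the cone, not just on the axis. Near a generic boundary point $z\in\partial\mathcal C_\eta\setminus\{0\}$ the profile $H_\eta$ has a nonvanishing gradient and the cone looks locally like a half-space, so morally the same computation applies with $e$ replaced by the (tilted) inner normal; but one must verify that the constants in the region-by-region bounds — in particular the application of Lemma \ref{lem15} with $p$ near $\partial\mathcal C_\eta$ and the second-derivative bound $|D^2\Phi_{\rm sub}|\le Cd^{s+\epsilon-2}$ inside $B_{d(x)/2}(x)$ — do not blow up as $x$ approaches the boundary, and that the correction term in $H_\eta$ remains a relative perturbation of size $O(\eta)$ there. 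This is a careful but routine localization, and once it is in place the homogeneity reduces everything to the single on-axis inequality already obtained.
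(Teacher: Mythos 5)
Your overall strategy — treat $\Phi_{\rm sub}$ as a perturbation of the pure power $(e\cdot x)_+^{s+\epsilon}$, use homogeneity to reduce to a compact set, and invoke the known positivity of $L(\ell^{s+\epsilon})$ from \cite{RS-K} — is the right one, and is the same in spirit as the paper's. The difference, and the gap, is in the choice of the compact reference set to which homogeneity reduces the problem.

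You reduce to the unit sphere $S^{n-1}\cap \mathcal C_\eta$ and then must control $L\Phi_{\rm sub}$ uniformly there, including at points arbitrarily close to the lateral boundary $\partial\mathcal C_\eta$, where $d(x)\to 0$. This is precisely where your argument is not carried out: near $\partial\mathcal C_\eta$ the comparison with $\ell^{s+\epsilon}$ must be done against a \emph{tilted} linear profile, the constants in the region-by-region bounds (the second-derivative estimate in $B_{d(x)/2}(x)$, the Lemma~\ref{lem15}-type integral) all degenerate as $d(x)\downarrow 0$, and one must verify that the relative error is still $O(\eta)$ times the leading term $\kappa d^{\epsilon-s}$. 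You flag this as ``the main obstacle'' and call it ``a careful but routine localization,'' but that localization is the entire technical content of the lemma; without it the proof is incomplete.

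The paper avoids this issue altogether by a cleverer reduction: by homogeneity it suffices to prove $L\Phi_{\rm sub}\ge c_\epsilon>0$ on the shifted cone boundary $e+\partial\mathcal C_\eta$, since the positive dilations of that hypersurface cover the interior of $\mathcal C_\eta$. On $e+\partial\mathcal C_\eta$ the distance to $\partial\mathcal C_\eta$ is bounded above and below uniformly (for small $\eta$ it is $\approx 1$), so there is no degeneration. Writing $\Phi_{P,\eta}(x)=\Phi(P+e+x)=\bigl(1+e\cdot x-\eta\psi_P(x)\bigr)_+^{s+\epsilon}$ for $P\in\partial\mathcal C_\eta$, one checks $\psi_P(0)=0$, $|\nabla\psi_P|\le C$ on $\R^n\setminus\{-P-e\}$, $|D^2\psi_P|\le C$ on $B_{1/2}$, uniformly in $P$; hence $\Phi_{P,\eta}\to(1+e\cdot x)_+^{s+\epsilon}$ in $C^2(\overline{B_{1/2}})$ and in the weighted $L^1$ norm as $\eta\searrow 0$, uniformly in $P$, which gives $L\Phi_{P,\eta}(0)\to L\bigl((1+e\cdot x)_+^{s+\epsilon}\bigr)(0)\ge c(s,\epsilon,\lambda)>0$ uniformly in $P$ and $L\in\mathcal L_*$. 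This is a single, nondegenerate perturbation estimate and is exactly what your outline is missing. I would encourage you to replace your on-axis computation plus unfinished localization with this translation trick.
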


\begin{proof}
We prove the statement for $\Phi_{\rm sub}$.
The statement for $\Phi_{\rm super}$ is proved similarly.

Let us denote $\Phi := \Phi_{\rm sub}$.
By homogeneity it is enough to prove that $L\Phi \ge c_\epsilon>0$ on points belonging to $e + \partial \mathcal{  C}_\eta$, since all the positive dilations of this set with respect to the origin cover the interior of $\mathcal{\tilde  C}_\eta$.

Let thus $P\in  \partial \mathcal{ C}_\eta$, that is,
\[ e\cdot P- \eta \left( |P| -  \frac{(e\cdot P)^2}{|P|} \right) =0.\]

Consider
\[\begin{split}
\Phi_{P,\eta}(x) &:= \Phi(P+e+x)
 \\
&= \left( e\cdot (P+e+x)- \eta \left( |P+e+x| -  \frac{(e\cdot (P+e+x))^2}{|P+e+x|} \right)\right)_+^{s+\epsilon}
\\
&= \left( 1 +e\cdot x- \eta \left( |P+e+x| -|P|-  \frac{(e\cdot (P+e+x))^2}{|P+e+x|} +\frac{(e\cdot P)^2}{|P|} \right)\right)_+^{s+\epsilon}
\\
&=\bigl( 1 +e\cdot x- \eta \psi_P(x)  \bigr)_+^{s+\epsilon},
\end{split}\]
where we define
\[\psi_P(x)  :=  |P+e+x| -|P|-  \frac{(e\cdot (P+e+x))^2}{|P+e+x|} +\frac{(e\cdot P)^2}{|P|} .\]

Note that the functions $\psi_P$  satisfy
\[\psi_P(0) = 0,\]
\[ |\nabla \psi_P(x)|  \le  C  \quad \mbox{in } \R^n \setminus \{ -P-e\},  \]
and
\[ |D^2 \psi_P(x)|  \le  C  \quad \mbox{for  }x \in B_{1/2}, \]
where $C$ does not depend on $P$ (recall that $|e|=1$).

Then, the family $\Phi_{P,\eta}$ satisfies
\[ \Phi_{P,\eta} \rightarrow  (1+e\cdot x)_+^{s+\epsilon} \quad \mbox{in }C^2(\overline{B_{1/2}})\]
as $\eta\searrow 0$, uniformly in $P$ and moreover
\[ \int_{\R^n}  \frac{ \bigl|\Phi_{P,\eta}- (1+e\cdot x)_+^{s+\epsilon}  \bigr|}{ 1+|x|^{n+2s} } \,dx  \le  \int_{\R^n}  \frac{ C (C \eta |x|) ^{s+\epsilon} }{ 1+|x|^{n+2s} } \,dx \le C\eta^{s+\epsilon}.\]
Thus,
\[ L\Phi_{P,\eta}(0)  \rightarrow L\bigl((1+e\cdot \quad )_+^{s+\epsilon}\bigr)(0) \geq c(s,\epsilon,\lambda) >0 \quad \mbox{as }\eta\searrow 0\]
uniformly in $P$.

In particular one can chose $\eta = \eta(s,\epsilon,\lambda,\Lambda)$ so that $L\Phi_{P,\eta}(0) \ge c_\epsilon>0$ for all $P\in \partial \mathcal {\tilde C}_\eta$ and for all $L\in \mathcal L_*$, and the lemma is proved.
\end{proof}

\section{Regularity in $C^1$ domains}
\label{sec-regularity-C1}

We prove here Theorems \ref{thmC1} and \ref{thmC1-2}.

\begin{defi}\label{def-C1-at-0}
Let $r_0>0$ and let $\rho:(0,r_0]\rightarrow 0$ be a nonincreasing function with $\lim_{t\downarrow 0}\rho(t) =0$.
We say that a domain $\Omega$ is {\em improving Lipschitz} at $0$ with inwards unit normal vector $e_n = (0,\dots,0,1)$ and modulus $\rho$ if
\[ \Omega\cap B_r = \{(x',x_n)\,:\,x_n > g(x')\}\cap B_r\qquad \textrm{for}\quad r\in(0,r_0],\]
where $g: \R^{n-1}\rightarrow \R$ satisfies
\[ \|g\|_{{\rm Lip}(B_r)}\le \rho(r)\quad \mbox{for }0<r\le r_0.\]
We say that $\Omega$ is {\em improving Lipschitz} at $x_0\in\partial\Omega$ with inwards unit normal $e\in S^{n-1}$ if the normal vector to $\partial\Omega$ at $x_0$ is $e$ and, after a rotation, the domain $\Omega-x_0$ satisfies the previous definition.
\end{defi}

We first prove the following $C^\alpha$ estimate up to the boundary.

\begin{lem}\label{C-alpha-est}
Let $s\in(0,1)$, and let $\Omega \subset \R^n$ be a Lipschitz domain in $B_1$ with Lipschitz constant less than $\ell$.
Namely, assume that after a rotation we have
\[ \Omega\cap B_1 = \{(x',x_n)\,:\,x_n > g(x')\}\cap B_1,\]
with $\|g\|_{{\rm Lip}(B_1)}\le \ell$.
Let $u\in C(B_1)$ be a viscosity solution of
\[ M^+ u \ge -K_0d^{\epsilon-s} \quad \mbox{and}\quad M^- u \le K_0d^{\epsilon-s} \quad \mbox{in }\Omega\cap B_1,\]
\[ u= 0 \quad \mbox{in }B_1\setminus\Omega.\]
Assume that
\[\|u\|_{L^\infty(B_R)}\le K_0R^{2s-\epsilon} \quad \mbox{for all } R\ge1.\]
Then, if $\ell\le \ell_0$, where $\ell_0= \ell_0(n,s, \lambda, \Lambda)$, we have
\[\|u\|_{C^{\bar\alpha}(B_{1/2})}\leq CK_0.\]
The constants $C$ and $\bar\alpha$ depend only on $n$, $s$, $\epsilon$ and ellipticity constants.
\end{lem}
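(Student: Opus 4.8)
The plan is to follow the same two-step scheme used in Proposition \ref{prop-Cs-2} and Proposition \ref{prop-Cs-2-bdd}: first bound $|u|$ by $Cd^s$ near $\partial\Omega$ using a barrier, then combine this with the interior estimates of \cite{CS} to upgrade to a $C^{\bar\alpha}$ bound. The key point, compared with the $C^{1,\alpha}$ case, is that here the domain is merely Lipschitz with small constant, so the barrier must come from the homogeneous supersolution of Lemma \ref{homog-subsol} rather than from the function $\psi^s$.

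\medskip

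\noindent\textbf{Step 1: $|u|\le CK_0 d^s$ in $B_{1/2}$.}
After dividing by $K_0$ we may assume $K_0=1$. Fix a boundary point $z\in\partial\Omega\cap B_{1/2}$ with inwards normal $e_z$. Since $\|g\|_{\rm Lip}\le\ell$, for $\ell\le\ell_0$ small the domain $\Omega$ near $z$ is contained in a translated cone $z+\mathcal C_{-\eta}$ (with axis $e_z$), where $\eta=\eta(\epsilon,s,\lambda,\Lambda)$ is the constant from Lemma \ref{homog-subsol}: indeed a Lipschitz graph with small slope is trapped between two such cones whose opening is governed by $\ell$, so choosing $\ell_0$ depending on $\eta$ does the job. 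Then $\Phi_{\rm super}(x-z)$ is a supersolution, comparable to $d_z^{s-\epsilon}$ where $d_z(x)=\bigl(e_z\cdot(x-z)\bigr)_+$, and using $M^-u\le d^{\epsilon-s}\le d_z^{\epsilon-s}$ one checks that $A\,d_z^{-\epsilon}(x)\,\Phi_{\rm super}(x-z)$ — more simply, a suitable multiple of $\Phi_1:=\bigl(e_z\cdot(x-z)\bigr)_+^{s}$ built as in Lemma \ref{supersol} from $\Phi_{\rm sub}$, $\Phi_{\rm super}$ — beats $u$ from above near $z$ via the comparison principle. Together with the growth assumption $\|u\|_{L^\infty(B_R)}\le R^{2s-\epsilon}$ to control the far-away contribution (exactly as the truncation argument in Proposition \ref{prop-Cs-2}), this gives $u(x)\le C\,{\rm dist}(x,\partial\Omega)^s$ near $z$; replacing $u$ by $-u$ and letting $z$ vary over $\partial\Omega\cap B_{1/2}$ yields $|u|\le Cd^s$ in $B_{1/2}$.

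\medskip

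\noindent\textbf{Step 2: interior estimates and conclusion.}
For any ball $B_r(x_0)\subset\Omega\cap B_{1/2}$ with $2r=d(x_0)$, the interior estimate of \cite{CS} for solutions of $M^+u\ge-d^{\epsilon-s}$, $M^-u\le d^{\epsilon-s}$ gives, after rescaling,
\[
r^{\bar\alpha}[u]_{C^{\bar\alpha}(B_r(x_0))}\le C\Bigl(r^{2s}\sup_{B_{2r}(x_0)}|f|+\sup_{R\ge1}R^{-2s}\|u\|_{L^\infty(B_{rR}(x_0))}\Bigr).
\]
The first term is $\le Cr^{2s}\cdot r^{\epsilon-s}=Cr^{s+\epsilon}$, and by Step 1 (and the growth hypothesis on $u$ for large $R$) the second term is $\le Cr^s$; hence $[u]_{C^{\bar\alpha}(B_r(x_0))}\le Cr^{s-\bar\alpha}\le C$ once $\bar\alpha\le s$, which we may assume. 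Finally one patches these interior seminorm bounds with the pointwise bound $|u|\le Cd^s$ to get $\|u\|_{C^{\bar\alpha}(B_{1/2})}\le C$: for $x,y\in B_{1/2}$ with $\rho=\min\{d(x),d(y)\}$, if $2\rho\ge|x-y|$ use the $C^{\bar\alpha}$ seminorm in $B_\rho(x)$, and if $2\rho<|x-y|$ bound $|u(x)-u(y)|\le|u(x)|+|u(y)|\le C\rho^s+C(|x-y|+\rho)^s\le C|x-y|^s\le C|x-y|^{\bar\alpha}$, exactly as at the end of Proposition \ref{prop-Cs-2}.

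\medskip

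\noindent\textbf{Main obstacle.}
The delicate point is Step 1: one must produce a genuine supersolution adapted to a Lipschitz (not $C^{1,\alpha}$) domain. Here the exterior cone condition with opening $\eta$ fixed — valid precisely because the Lipschitz constant $\ell\le\ell_0$ is chosen small relative to the $\eta$ of Lemma \ref{homog-subsol} — is exactly what makes $\Phi_{\rm super}$ a valid barrier uniformly over all boundary points and all $L\in\mathcal L_*$; combining $\Phi_{\rm sub}$ and $\Phi_{\rm super}$ to cancel the singular $d^{\pm\epsilon}$ weights (as in the construction of $\phi_1$ in Lemma \ref{supersol}) and checking the ordering on the whole complement of a thin strip is the only part requiring care. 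Everything after that is the routine interior-to-boundary patching already carried out twice in Section \ref{sec-proof-main}.
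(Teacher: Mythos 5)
Your overall two‑step structure (barrier bound near $\partial\Omega$, then interior estimates plus patching) is the same as the paper's, but Step~1 as you state it contains a genuine error. You claim $|u|\le CK_0\,d^s$ in $B_{1/2}$, and to get it you propose a barrier comparable to $d^s$ built ``as in Lemma~\ref{supersol} from $\Phi_{\rm sub}$, $\Phi_{\rm super}$.'' This is not achievable in a merely Lipschitz (or $C^1$) domain: $\Phi_{\rm super}$ is homogeneous of degree $s-\epsilon$ and is a supersolution only because of that defect in the exponent; there is no supersolution in the exterior cone $\mathcal C_{-\eta}$ that is comparable to $d^s$, and the function $(e_z\cdot(x-z))_+^s$ is a solution only in the exact half‑space, not a supersolution in a cone. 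The construction of Lemma~\ref{supersol} relies essentially on the $C^{1,\alpha}$ regularity of $\Omega$ through $\psi$ and Proposition~\ref{lem11}, so it cannot be imported here. Indeed, the whole point of Sections~4--5 (as opposed to Sections~2--3) is that in $C^1$ or Lipschitz domains one does \emph{not} have $|u|\lesssim d^s$; the paper is explicit about this in the introduction.

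The paper's Step~1 instead proves only $|u(x)|\le C\,{\rm dist}(x,\partial\Omega)^{s-\epsilon}$, by using $C\Phi_{\rm super}-\chi_{B_1(z_0)}$ as the barrier (the characteristic‑function correction makes it a strict supersolution at all scales, exploiting nonlocality). This weaker bound is all one needs: in Step~2 the [CS] interior estimate is applied with growth exponent $s-\epsilon$, and since $\bar\alpha$ from [CS] satisfies $\bar\alpha< s$ and $\epsilon$ can be taken small, the patching argument closes with $C^{\bar\alpha}$ regularity. Your Step~2 and the patching at the end are fine as written once the exponent in Step~1 is corrected from $s$ to $s-\epsilon$; so the plan is salvageable, but as stated the key barrier claim is false and the proposed construction would not produce a supersolution.
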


\begin{proof}
By truncating $u$ in $B_2$ and dividing it by $CK_0$ we may assume that
\[\|u\|_{L^\infty(\R^n)}=1\]
and that
\[ M^+ u \ge -d^{\epsilon-s} \quad \mbox{and}\quad M^- u \le d^{\epsilon-s} \quad \mbox{in }\Omega\cap B_1.\]
Now, we divide the proof into two steps.

{\em Step 1.} We first prove that
\begin{equation}\label{Calphaest1}
\bigl|u(x)\bigr|\le C|x-x_0|^\alpha\quad\textrm{in}\ \Omega\cap B_{3/4},
\end{equation}
where $x_0\in\partial\Omega$ is the closest point to $x$ on $\partial\Omega$.
We will prove \eqref{Calphaest1} by using a supersolution.
Indeed, given $\epsilon\in(0,s)$, let $\Phi_{\rm super}$ and $\mathcal C_\eta$ be the homogeneous supersolution and the cone from Lemma \ref{homog-subsol}, where $e = e_n$.
Note that $\Phi_{\rm super}$ is a positive function satisfying $M^- \Phi_{\rm super}\le -cd^{-\epsilon-s}<0$ outside the convex cone $\R^n\setminus \mathcal C_\eta$, and it is homogeneous of degree $s-\epsilon$.

Then, we easily check that the function $\psi = C\Phi_{\rm super}-\chi_{B_1(z_0)}$, with $C$ large and $|z_0|\geq2$ such that $\Phi_{\rm super}\geq1$ in $B_1(z_0)$, satisfies $M^+ \psi \le -d^{\epsilon-s}$ in $B_{1/4}\cap\mathcal C_\eta$ and $\psi\geq\frac14$ in $\mathcal C_\eta\setminus B_{1/4}$.
Indeed, we simply use that $M^-\chi_{B_1(z_0)}\geq c_0>0$ in $B_{1/4}$.
Note that this argument exploits the nonlocal character of the operator and a slightly more complicated one would be needed in order to obtain a result that is stable as $s\uparrow 1$.

Note that the supersolution $\psi$ vanishes in $B_{1/4}\setminus \mathcal C_\eta$.
Then, if $\ell_0$ is small enough, for  every point in $x_0\in\partial\Omega \cap B_{3/4}$ we will have
\[x_0+(B_{1/4}\setminus \mathcal C_\eta)\subset B_1\setminus\Omega.\]

Then, using translates of $\psi$  (and $-\psi$) upper (lower) barriers we get $\bigl|u(x)\bigr|\le \psi(x_0+x)\leq C|x-x_0|^{s-\epsilon}$, as desired.

{\em Step 2.}  To obtain a $C^\alpha$ estimate up to the boundary, we use the following interior estimate from \cite{CS}: Let $r\in(0,1)$,
\[ M^+ u\ge -r^{\alpha-2s} \quad \mbox{and } M^- u\le r^{\alpha-2s} \quad \mbox{in }B_r(x)\]
and
\[ |u(z)|\le r^\alpha\left( 1+ \frac{(z-x)^\alpha}{r^\alpha} \right)\quad \mbox{in all of }\R^n.\]
Then,
\[  [u]_{C^\alpha(B_{r/2}(x))} \le C,\]
with $C$ and $\alpha>0$ depending only $s$, ellipticity constants and dimension.

Combining this estimate with \eqref{Calphaest1}, it follows that
\[\|u\|_{C^\alpha(B_{1/2})}\leq C.\]
Thus, the lemma is proved.
\end{proof}

We will also need the following.

\begin{lem}\label{lemvarphi}
Let $s\in(0,1)$, $\alpha\in (0,\bar\alpha)$, and $C_0\ge 1$.
Given $\epsilon\in (0,\alpha]$, there exist $\delta>0$ depending only on $\epsilon$, $n$, $s$, and ellipticity constants, such that the following statement holds.

Assume that $\Omega \subset \R^n$ is a Lipchitz domain such that $\partial\Omega\cap B_{1/\delta}$ is a Lipchitz graph of the form $x_n = g(x')$, in $|x'|< 1/\delta$ with
\[ [g]_{{\rm Lip}(B_{1/\delta})} \le \delta,\]
and  $0\in \partial\Omega$.

Let $\varphi\in C(\R^n)$ be a viscosity solution of
\[ M^+ \varphi \ge -\delta\,d^{\epsilon-s} \quad \mbox{and}\quad M^- \varphi \le \delta\,d^{\epsilon-s} \quad \mbox{in }\Omega\cap B_{1/\delta},\]
\[ \varphi= 0 \quad \mbox{in } B_{1/\delta}\setminus\Omega,\]
satisfying
\[ \varphi\ge 0  \quad \mbox{in }B_1.\]

Assume that $\varphi$ satisfies
\[ \sup_{B_1} \varphi =1 \quad \mbox{and}\quad  \|\varphi\|_{L^\infty(B_{2^l})}\le C_0 (2^l)^{s+\alpha}\quad \mbox{for all } l\ge0.\]
Then, we have
\begin{equation}\label{integral}
\int_{B_1} \varphi^2\,dx \ge  \frac 1 2\int_{B_1} (x_n)_+^{2s}\,dx
\end{equation}
and
\begin{equation}  \label{iterated}
\left(\frac{1}{2}\right)^{s+\epsilon} \le  \frac{\sup_{B_{2^{l-1}}}\varphi }{\sup_{B_{2^l}}\varphi}  \le \left(\frac{1}{2}\right)^{s-\epsilon}
\quad \mbox{for all }l\le 0.
\end{equation}
\end{lem}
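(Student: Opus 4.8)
The strategy is a compactness/contradiction argument in the spirit of Proposition \ref{prop-bdry-reg}, exploiting that as $\delta\to 0$ the domain flattens to the half-space $\{x_n>0\}$ and the right-hand side disappears. First I would note that both conclusions are \emph{a priori} estimates with explicit geometric constants: if they fail, there exist sequences $\delta_m\to 0$, domains $\Omega_m$ with Lipschitz graphs of constant $\le\delta_m$, $0\in\partial\Omega_m$, and functions $\varphi_m\in C(\R^n)$ solving $M^+\varphi_m\ge -\delta_m d_m^{\epsilon-s}$, $M^-\varphi_m\le \delta_m d_m^{\epsilon-s}$ in $\Omega_m\cap B_{1/\delta_m}$, $\varphi_m=0$ outside $\Omega_m$ in $B_{1/\delta_m}$, $\varphi_m\ge 0$ in $B_1$, with $\sup_{B_1}\varphi_m=1$ and the growth control $\|\varphi_m\|_{L^\infty(B_{2^l})}\le C_0(2^l)^{s+\alpha}$, but for which \eqref{integral} fails for all $m$, or \eqref{iterated} fails for all $m$ (treating the two cases separately).

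**Compactness step.** In either case I would apply the uniform $C^{\bar\alpha}$ estimate up to the boundary from Lemma \ref{C-alpha-est} (with $K_0$ controlled by $C_0$ and the growth hypothesis, since $\epsilon\le\alpha<\bar\alpha\le 2s$ makes the growth $(2^l)^{s+\alpha}$ absorbable into $K_0R^{2s-\epsilon}$ on the relevant scales): on each fixed ball $B_M$, for $m$ large enough that $1/\delta_m>2M$, one gets $\|\varphi_m\|_{C^{\bar\alpha}(B_M)}\le C(M)$ independent of $m$. By Arzel\`a--Ascoli and a diagonal argument, a subsequence converges locally uniformly to $\varphi_\infty\in C(\R^n)$; the growth bound passes to the limit, $\varphi_\infty\ge 0$, and $\sup_{B_1}\varphi_\infty=1$ by uniform convergence. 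Since $\Omega_m\to\{x_n>0\}$ locally uniformly and $d_m\to (x_n)_+$, while the right-hand sides $\delta_m d_m^{\epsilon-s}\to 0$ locally uniformly in $\{x_n>0\}$ (here $\epsilon<s$ is used, but $d_m^{\epsilon-s}$ is locally bounded away from $\partial\{x_n>0\}$, which is all that is needed to pass to the limit on compact subsets), one concludes $M^-\varphi_\infty\le 0\le M^+\varphi_\infty$ in $\{x_n>0\}$ and $\varphi_\infty=0$ in $\{x_n\le 0\}$. By the classification result \cite[Proposition 5.1]{RS-K}, $\varphi_\infty(x)=\kappa(x_n)_+^s$, and the normalization $\sup_{B_1}\varphi_\infty=1$ forces $\kappa=\bigl(\sup_{B_1}(x_n)_+^s\bigr)^{-1}$; in particular $\varphi_\infty$ is a fixed nonzero multiple of $(x_n)_+^s$.

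**Concluding each case.** For \eqref{integral}: if it failed for all $m$, then $\int_{B_1}\varphi_m^2\,dx<\tfrac12\int_{B_1}(x_n)_+^{2s}\,dx$, and passing to the limit (using local uniform convergence and the uniform $L^\infty$ growth bound to justify convergence of the $L^2$ integral on the bounded set $B_1$) gives $\int_{B_1}\varphi_\infty^2\le \tfrac12\int_{B_1}(x_n)_+^{2s}$. But $\varphi_\infty=\kappa(x_n)_+^s$ with $\kappa^2\int_{B_1}(x_n)_+^{2s}$; since $\kappa=\bigl(\sup_{B_1}(x_n)_+^s\bigr)^{-1}>1$ (as $\sup_{B_1}(x_n)_+^s<1$), we get $\int_{B_1}\varphi_\infty^2>\int_{B_1}(x_n)_+^{2s}$, a contradiction. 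For \eqref{iterated}: if the left (resp. right) inequality failed for all $m$, pick the offending $l=l_m\le 0$; by the growth/normalization bounds $l_m$ ranges over a set on which the ratios $\sup_{B_{2^{l-1}}}\varphi_m/\sup_{B_{2^l}}\varphi_m$ are trapped between fixed positive constants, so after passing to a further subsequence one may assume $l_m\equiv l_0$ is constant, or extract a limiting statement; passing to the limit gives $\sup_{B_{2^{l_0-1}}}\varphi_\infty/\sup_{B_{2^{l_0}}}\varphi_\infty$ outside $[(1/2)^{s+\epsilon},(1/2)^{s-\epsilon}]$. But $\varphi_\infty=\kappa(x_n)_+^s$ is homogeneous of degree exactly $s$, so that ratio equals $(1/2)^s$, which lies strictly inside $\bigl((1/2)^{s+\epsilon},(1/2)^{s-\epsilon}\bigr)$ since $\epsilon>0$ — contradiction. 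The main obstacle I anticipate is the bookkeeping in the second case: making sure the quantity $\sup_{B_{2^l}}\varphi_m$ is comparable to $(2^l)^s$ \emph{uniformly} for all $l\le 0$, so that the ratios cannot degenerate to $0$ or $\infty$ in the limit — this requires combining the lower bound $\sup_{B_1}\varphi_m=1$ with the $C^{\bar\alpha}$ estimate and the nonnegativity, essentially a Harnack-type comparison near the flat boundary, to pin $\varphi_m$ from below by $c(2^l)^s$ on $B_{2^l}$.
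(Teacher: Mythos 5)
Your plan for \eqref{integral} and the \emph{single}-step version of \eqref{iterated} (namely the case $l=0$, comparing $\sup_{B_{1/2}}\varphi$ with $\sup_{B_1}\varphi$) is essentially the paper's Step 1 and is correct, modulo one small slip: since $\sup_{B_1}(x_n)_+^s=1$, the normalization forces $\kappa=1$, not $\kappa>1$. This does not break your contradiction for \eqref{integral} because $\int_{B_1}(x_n)_+^{2s}>\tfrac12\int_{B_1}(x_n)_+^{2s}$ still holds, but the reasoning as written is off.

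The genuine gap is in your treatment of \eqref{iterated} for general $l\le 0$. You attempt to handle all scales by a single compactness argument, extracting a sequence $l_m\le 0$ where the inequality fails, and then saying one may assume $l_m\equiv l_0$ constant ``or extract a limiting statement.'' That step does not go through: there is no a priori bound on $l_m$, and if $l_m\to-\infty$ the locally uniform convergence $\varphi_m\to\varphi_\infty$ gives no control on the ratio $\sup_{B_{2^{l_m-1}}}\varphi_m/\sup_{B_{2^{l_m}}}\varphi_m$ at shrinking scales (the errors in uniform convergence are of size $o(1)$, while $\sup_{B_{2^{l_m}}}\varphi_m$ could itself be $o(1)$). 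You recognize this obstruction and propose fixing it by a ``Harnack-type comparison near the flat boundary, to pin $\varphi_m$ from below by $c(2^l)^s$ on $B_{2^l}$'' --- but that lower bound is precisely what \eqref{iterated} would establish, so the argument is circular as stated. The paper instead proves \eqref{iterated} by induction: having established the single-step estimate at scale $1$ (Step 1), it assumes the doubling estimate \eqref{doubling} holds for $m\le l\le 0$, and then considers the rescaled and renormalized function $\bar\varphi(x)=\varphi(2^{-m}x)/\sup_{B_{2^m}}\varphi$. One checks that $\bar\varphi$ satisfies the \emph{same} hypotheses as $\varphi$: the equation is preserved (with a smaller $\delta$, since the rescaled right-hand side shrinks and the rescaled boundary flattens), $\bar\varphi\ge0$ in $B_1$, $\sup_{B_1}\bar\varphi=1$, and --- crucially, this is where the induction hypothesis \eqref{doubling} and the assumed global growth $\|\varphi\|_{L^\infty(B_{2^l})}\le C_0(2^l)^{s+\alpha}$ for $l\ge0$ come in --- $\bar\varphi$ has the required growth control $\|\bar\varphi\|_{L^\infty(B_{2^l})}\le C_0(2^l)^{s+\alpha}$ for $l\ge0$. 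Applying Step 1 to $\bar\varphi$ then yields \eqref{doubling} for $l=m-1$, advancing the induction. This rescaling-and-reapply mechanism is the idea your proposal is missing.
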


\begin{proof}
{\em Step 1.} We first prove that, for $\delta$ small enough, we have \eqref{integral} and
\begin{equation}  \label{step}
\left(\frac{1}{2}\right)^{s+\epsilon} \le  \frac{\sup_{B_{1/2}}\varphi }{\sup_{B_{1}}\varphi}  \le \left(\frac{1}{2}\right)^{s-\epsilon}
\end{equation}
In a second step we will iterate \eqref{step} to show \eqref{iterated}.

The proof of \eqref{step} is by compactness.
Suppose that there is a sequence $\varphi_k$ of functions satisfying the assumptions with $\delta = \delta_k \downarrow 0$ for which one of the three possibilities
\begin{equation}\label{contradiction1}
 \left(\frac{1}{2}\right)^{s+\epsilon} > \frac{\sup_{B_{1/2}}\varphi_k }{\sup_{B_{1}}\varphi_k},
\end{equation}
\begin{equation}\label{contradiction2}
\frac{\sup_{B_{1/2}}\varphi_k }{\sup_{B_{1}}\varphi_k} > \left(\frac{1}{2}\right)^{s-\epsilon}
\end{equation}
or
\begin{equation}\label{contradiction3}
\int_{B_1} \varphi_k^2\,dx <  \frac 1 2\int_{B_1} (x_n)_+^{2s}\,dx
\end{equation}
holds for all $k\ge 1$.

Let us show that a subsequence of $\varphi_k$ converges locally uniformly $\R^n$ to the function $(x_n)_+^s$.
Indeed, thanks to Lemma \ref{C-alpha-est} and the Arzela-Ascoli theorem a subsequence of $\varphi_k$ converges to a function $\varphi\in C(\R^n)$, which satisfies $M^+\varphi\geq0$ and $M^-\varphi\leq 0$ in $\R^n_+$, and $\varphi=0$ in $\R^n_-$.
Here we used that $\delta_k\to 0$.
Moreover, by the growth control $\|\varphi\|_{L^\infty(B_R)}\leq CR^{s+\alpha}$ and the classification theorem \cite[Proposition 5.1]{RS-K}, we find $\varphi(x)=K(x_n)_+^s$.
But since $\sup_{B_1}\varphi_k=1$, then $K=1$.

Therefore, we have proved that a subsequence of $\varphi_k$ converges uniformly in $B_1$ to $(x_n)_+^s$.
Passing to the limit \eqref{contradiction1}, \eqref{contradiction2} or \eqref{contradiction3}, we reach a contradiction.

\vspace{2mm}

{\em Step 2.} We next show that we can iterate \eqref{step} to obtain \eqref{iterated} by induction.
Assume that for some $m\le 0$ we have
\begin{equation}\label{doubling}
\left(\frac{1}{2}\right)^{s+\epsilon} \le  \frac{\sup_{B_{2^{l-1}}}\varphi }{\sup_{B_{2^l}}\varphi}  \le \left(\frac{1}{2}\right)^{s-\epsilon}\quad \mbox{for  }m\le l\le 0.
\end{equation}
We then consider the function
\[ \bar \varphi = \frac{\varphi(2^{-m} x)}{\sup_{B_{2^m}} \varphi},\]
and notice that
\[2^{(s+\epsilon)l} \le  \sup_{B_{2^l}}\varphi   \le 2^{(s-\epsilon)l}\quad \mbox{for  }m\le l\le 0.\]

Thus,
\[ M^+ \bar \varphi \ge -\frac{\delta 2^{2s m}}{2^{(s+\epsilon)m}} \ge -\delta \quad \mbox{in } (2^{-m}\Omega)\cap B_{2^{-m}/\delta}\]
and similarly
\[  M^- \bar \varphi \le \delta \quad \mbox{in } (2^{-m}\Omega)\cap B_{2^{-m}/\delta}.\]
Clearly
\[ \bar \varphi = 0 \quad \mbox{in } (2^{-m}\mathcal C\Omega)\cap B_{2^{-m}/\delta} \]
and
\[\varphi\ge0 \quad \mbox{in } B_{2^{-m}}\supset B_1.\]

Since $\partial \Omega$ is Lipchitz with constant $\delta$ in $B_{1/\delta}$ and $2^{-m}\ge1$ ($m\le 0$) we have that the rescaled domain $(2^{-m}\Omega)\cap B_{2^{-m}/\delta}$ is also Lipchitz with the same constant $1/\delta$ in a larger ball.

Finally, using \eqref{doubling} again we find
\[\sup_{B_{2^l}} \bar \varphi = \frac{\sup_{B_{2^{l+m}}}  \varphi}{\sup_{B_{2^{m}}} \varphi} \le 2^{(s+\epsilon)l}  \le 2^{(s+\alpha)l}\qquad \textrm{for}\ l\ge0\ \textrm{with}\ l+m\le 0,\]
For $l+m>0$ we have
\[\sup_{B_{2^l}} \bar \varphi = \frac{\sup_{B_{2^{l+m}}}  \varphi}{2^{(s+\epsilon)m}  \varphi} \le \frac{C_02^{(s+\alpha)(l+m)}}{2^{(s+\epsilon)m}} = C_02^{(s+\alpha)l} 2^{(\alpha-\epsilon)m} \le C_02^{(s+\alpha)l}.\]

Hence, using Step 1, we obtain
\[ \left(\frac{1}{2}\right)^{s+\epsilon} \le  \frac{\sup_{B_{1/2}}\bar \varphi }{\sup_{B_{1}}\bar \varphi}  \le \left(\frac{1}{2}\right)^{s-\epsilon}.\]
Thus \eqref{doubling} holds for $l= m-1$, and the lemma is proved.
\end{proof}

We next prove the following.

\begin{prop}\label{bdryregC1doms}
Let $s\in(0,1)$, $\alpha\in (0,\bar\alpha)$, and $C_0\ge 1$.

Let $\Omega\subset \R^n$ be a domain that is improving Lipschitz at $0$ with unit outward normal $e\in S^{n-1}$ and with modulus of continuity $\rho$ (see Definition \ref{def-C1-at-0}).
Then, there exists $\delta>0$, depending only on $\alpha$, $s$, $C_0$, ellipticity constants, and dimension such that the following statement holds.

Assume that $r_0=1/\delta$ and $\rho(1/\delta)<\delta$.
Suppose that $u,\varphi \in C(\R^n)$ are viscosity solutions of
\begin{equation}\label{eqnpointbdryreg}
\begin{cases}
M^+ (au+b\varphi)  \ge  -\delta(|a|+|b|)d^{\alpha-s}  \quad &\mbox{in } B_{1/\delta}\cap \Omega\\
u=\varphi =  0 & \mbox{in }B_{1/\delta}\setminus \Omega,
\end{cases}
\end{equation}
for all $a,b\in \R$.
Moreover, assume that
\begin{equation}\label{assumptiongrowth}
\|au+b\varphi\|_{L^\infty(\R^n)}\le C_0 (|a|+|b|)R^{s+\alpha} \quad \mbox{for all }R\ge 1,
\end{equation}
\[ \varphi\ge 0 \quad \mbox{in }B_{1}, \quad\mbox{and}\quad  \sup_{B_1}\varphi =1.\]

Then, there is $K\in \R$ with $|K|\le C$ such that
\[ \bigl| u(x) - K\varphi(x) \bigr| \le C|x|^{s+\alpha}\quad\mbox{in }B_1, \]
where $C$ depends only on $\rho$, $C_0$, $\alpha$, $s$, ellipticity constants, and dimension.
\end{prop}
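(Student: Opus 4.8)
\textbf{Proof plan for Proposition \ref{bdryregC1doms}.}
The plan is to run a blow-up/compactness argument in the spirit of Proposition \ref{prop-bdry-reg}, but adapted to the fact that the natural profile is now $\varphi$ itself rather than $\psi^s$, and that $\Omega$ is only improving Lipschitz. The key new input compared with the $C^{1,\alpha}$ case is Lemma \ref{lemvarphi}, which plays the role that the explicit computation $|L(\psi^s)|\le Cd^{\alpha-s}$ played there: it guarantees that $\varphi$ is nondegenerate in an $L^2$ sense on $B_1$ and, crucially, that $\varphi$ doubles at all dyadic scales $2^l$, $l\le0$, between $(1/2)^{s+\epsilon}$ and $(1/2)^{s-\epsilon}$. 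This dyadic doubling is what lets us control the growth of the rescaled error.

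First I would set, for each $r\in(0,1)$,
\[
Q_*(r):=\frac{\int_{B_r}u\,\varphi\,dx}{\int_{B_r}\varphi^2\,dx},\qquad \phi_r:=Q_*(r)\varphi,
\]
so that $\phi_r$ is the $L^2(B_r)$-projection of $u$ onto $\varphi$. As in Lemma \ref{lem-blow-up-seq}, it suffices to prove the a priori estimate $\|u-\phi_r\|_{L^\infty(B_r)}\le Cr^{s+\alpha}$ for all $r$: the doubling from \eqref{iterated}, together with $\sup_{B_r}\varphi\sim r^s$ up to the factors $r^{\pm\epsilon}$, then forces $Q_*(r)$ to be Cauchy as $r\downarrow0$ (one sums a geometric-type series using $|Q_*(2r)-Q_*(r)|\lesssim r^\alpha$, which itself follows from the projection property and \eqref{integral}), and the limit $K$ satisfies the claimed bound. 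Next I argue by contradiction: if no uniform $C$ works there are sequences $\Omega_k$, $u_k$, $\varphi_k$ with $\delta_k\downarrow0$ violating the estimate. I would introduce the monotone quantity
\[
\theta(r):=\sup_k\sup_{r'>r}(r')^{-s-\alpha}\|u_k-\phi_{k,r'}\|_{L^\infty(B_{r'})},
\]
pick $r_m\downarrow0$ and indices $k_m$ nearly attaining $\theta(r_m)$, and rescale
\[
v_m(x):=\frac{u_{k_m}(r_mx)-\phi_{k_m,r_m}(r_mx)}{(r_m)^{s+\alpha}\theta(r_m)},
\]
together with the rescaled profiles $\bar\varphi_m(x):=\varphi_{k_m}(r_mx)/\sup_{B_{r_m}}\varphi_{k_m}$. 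By construction $v_m\perp\bar\varphi_m$ in $L^2(B_1)$, $\|v_m\|_{L^\infty(B_1)}\ge\frac12$, and—using the dyadic doubling \eqref{iterated} for $\varphi_{k_m}$ to sum the telescoping estimate for $Q_{k_m}(r_mR)-Q_{k_m}(r_m)$ exactly as in \eqref{vm-growth}—one gets the polynomial growth $\|v_m\|_{L^\infty(B_R)}\le CR^{s+\alpha}$ for all $R\ge1$.

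Then I would pass to the limit. The equations satisfied by $v_m$ have right-hand side bounded by $C\theta(r_m)^{-1}d_m^{\alpha-s}$ by \eqref{eqnpointbdryreg} (the $au+b\varphi$ form of the hypothesis is used precisely so that $v_m$, a linear combination of $u_{k_m}$ and $\varphi_{k_m}$, solves a two-sided extremal inequality), so $M^+v_m\ge -o(1)d_m^{\alpha-s}$ and $M^-v_m\le o(1)d_m^{\alpha-s}$ on $\Omega_m\cap B_{r_m^{-1}}$, where $\Omega_m:=r_m^{-1}\Omega_{k_m}$ flattens to $\{x_n>0\}$ since $\rho(r_0)<\delta$ and $\rho$ is a modulus. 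Lemma \ref{C-alpha-est} gives uniform $C^{\bar\alpha}$ bounds on compact sets (its Lipschitz-constant smallness hypothesis is met because $\Omega_m$ is improving Lipschitz), so along a subsequence $v_m\to v\in C(\R^n)$ with $M^+v\ge0\ge M^-v$ in $\{x_n>0\}$, $v=0$ in $\{x_n\le0\}$, and $|v(x)|\le C|x|^{s+\alpha}$. By the classification result \cite[Proposition 5.1]{RS-K} this forces $v(x)=\kappa(x_n)_+^s$. Meanwhile $\bar\varphi_m\to(x_n)_+^s$ by the same compactness plus Lemma \ref{lemvarphi}, so the orthogonality relation passes to the limit as $\int_{B_1}v\,(x_n)_+^s\,dx=0$, whence $\kappa=0$ and $v\equiv0$, contradicting $\|v_m\|_{L^\infty(B_1)}\ge\frac12$. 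Finally I would combine the resulting $L^\infty$ bound with interior estimates from \cite{CS}, rescaled on balls $B_r(x_0)$ with $2r=d(x_0)$, to upgrade $\|u-K\varphi\|_{L^\infty(B_r(x_0))}\le Cr^{s+\alpha}$ to the pointwise bound in all of $B_1$, exactly as in the proof of Theorem \ref{thm-bdry-reg}.

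The main obstacle is controlling the growth of $v_m$ on large balls: unlike the $C^{1,\alpha}$ case, we have no explicit barrier comparable to $d^s$ and no a priori comparability $u\sim d^s$, so the only leverage on $\|v_m\|_{L^\infty(B_R)}$ comes from the dyadic doubling of $\varphi$ in \eqref{iterated}. Making the telescoping sum for $Q_{k_m}(r_mR)-Q_{k_m}(r_m)$ close—and checking that the $2^{(\alpha-\epsilon)m}$-type factors stay bounded, which is why one needs $\epsilon\le\alpha$—is the delicate bookkeeping, and it is also where the two-sided (linear-combination) form of the hypothesis \eqref{eqnpointbdryreg} is genuinely needed, since $v_m$ is not a solution of a single equation.
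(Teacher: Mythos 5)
Your proposal matches the paper's proof essentially step for step: the $L^2$-projection $K_r\varphi$ of $u$ onto $\varphi$, Lemma \ref{lemvarphi} supplying the $L^2$-nondegeneracy \eqref{integral} and the dyadic doubling \eqref{iterated} of $\varphi$, the contradiction blow-up with the monotone quantity $\theta(r)$ and rescaled error $v_m$, use of the $au+b\varphi$ hypothesis to obtain two-sided extremal inequalities for $v_m$, compactness from Lemma \ref{C-alpha-est}, the Liouville classification \cite[Proposition 5.1]{RS-K} identifying $v_\infty=\kappa(x\cdot e)_+^s$ and the rescaled profile limit $(x\cdot e)_+^s$, and passing the orthogonality relation to the limit to force $\kappa=0$. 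One small superfluous step: the final sentence about invoking interior estimates from \cite{CS} on balls $B_r(x_0)$ with $2r=d(x_0)$ is not needed here, since once one has $\|u-K_r\varphi\|_{L^\infty(B_r)}\le Cr^{s+\alpha}$ for all $r$ and $K_r\to K$ at the rate given by the doubling, the stated pointwise bound $|u(x)-K\varphi(x)|\le C|x|^{s+\alpha}$ in $B_1$ follows by taking $r=|x|$; the interior estimates only enter later, in Step 3 of the proof of Theorem \ref{thmC1-2}, to convert the expansion into a H\"older bound for the quotient $u_1/u_2$.
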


\begin{proof}
{\em Step 1 (preliminary results).} Fix $\epsilon\in(0,\alpha)$. Using Lemma~\ref{lemvarphi}, if $\delta$ is small enough we have
\begin{equation}\label{integral2}
\int_{B_1} \varphi^2\,dx \ge  \frac 1 2\int_{B_1} (x_n)_+^{2s}\,dx \ge c(n,s)>0
\end{equation}
and
\begin{equation}  \label{iterated2}
\left(\frac{1}{2}\right)^{s+\epsilon} \le  \frac{\sup_{B_{2^{l-1}}}\varphi }{\sup_{B_{2^l}}\varphi}  \le \left(\frac{1}{2}\right)^{s-\epsilon}\quad \mbox{for all }l\le 0.
\end{equation}

In particular, since $\sup_{B_{1}}\varphi=1$ then
\begin{equation}\label{iteratedwithr}
 (r/2)^{s+\epsilon} \le  \sup_{B_{r}} \varphi  \le (2r)^{s-\epsilon}\quad \mbox{for all } r\in(0,1).
\end{equation}

\vspace{2mm}

{\em Step 2.}  We prove now, with a blow-up argument, that
\begin{equation}\label{goal}
\bigl\| u(x) - K_r \varphi(x) \bigr\|_{L^\infty(B_r)} \le C r^{s+\alpha}
\end{equation}
for all $r\in(0,1]$, where
\begin{equation}\label{Kr}
 K_r := \frac{\int_{B_r} u\,\varphi\,dx}{\int_{B_r} \varphi^2\,dx}.
\end{equation}

Notice that \eqref{goal} implies the estimate of the proposition with $K = \lim_{r\searrow 0}K_r$. Indeed, we have $|K_1|\le C$ ---which is immediate using \eqref{assumptiongrowth} with $a=1$ and $b=0$ and \eqref{integral2}--- and
\[\begin{split}
|K_{r}-K_{r/2}|(r/2)^{s+\epsilon}  &\le \bigl\|K_{r}\varphi-K_{r/2}\varphi\bigr\|_{L^\infty(B_r)}
\\
&\le \bigl\| u - K_r \varphi\bigr\|_{L^\infty(B_r)}  + \bigl\| u - K_{r/2} \varphi \bigr\|_{L^\infty(B_r)}
\\
&\le Cr^{s+\alpha}.
\end{split}\]
Thus,
\[ |K| \le |K_1| + \sum_{j=0}^\infty |K_{2^{-j}}-K_{2^{-j-1}}| \le C + C\sum_{j=0}^\infty 2^{-j(\alpha-\epsilon)}\leq C,\]
provided that $\epsilon$ is taken smaller that $\alpha$.

Let us prove \eqref{goal} by contradiction.
Assume that we have a sequences $\Omega_j$, $e_j$,$u_j$, $\varphi_j$ satisfying the assumptions of the Proposition, but not \eqref{goal}.
That is,
\[ \lim_{j\to\infty} \sup_{r>0} r^{-s-\alpha}\bigl\| u_j(x) - K_{r,j} \varphi_j \bigr\|_{L^\infty(B_{r})}= \infty,\]
were $K_{r,j}$ is defined as in \eqref{Kr} with $u$ replaced by $u_j$ and $\varphi$ replace by $\varphi_j$.

Define, for $r\in(0,1]$ the nonincreasing quantity
\[ \theta(r) = \sup_{r'\in (r,1)} (r')^{-s-\alpha} \bigl\| u_j(x) - K_{r',j} \varphi_j \bigr\|_{L^\infty(B_{r'})}.\]
Note that $\theta(r)<\infty$ for $r>0$ since  $\|u_j\|_{L^\infty(\R^n)}\le 1$ and that $\lim_{r\searrow 0} \theta(r)=\infty$.

For every $m \in \mathbb N$, by definition of $\theta$ there exist $r'_m\ge1/m$, $j_m$, $\Omega_m= \Omega_{j_m}$, and  $e_m= e_{j_m}$ such that
\[ (r'_m)^{-s-\alpha} \bigl\| u_{j_m}(x) - K_{r'_m,j_m}  \varphi_{j_m} \bigr \|_{L^\infty(B_{r'_m})} \ge \frac 12\theta(1/m) \ge \frac 12 \theta(r'_m).\]
Note that $r'_m \rightarrow 0$.
Taking a subsequence we may assume that $e_m \to e \in S^{n-1}$. Denote
\[u_m = u_{j_m},\quad K_m = K_{r'_m,j_m}\quad\mbox{and}\quad  \varphi_m = \varphi_{j_m}.\]

We now consider the blow-up sequence
\[ v_m(x) = \frac{ u_m(r'_m x)-   K_m \varphi_m(r'_m x) }{ (r'_m)^{s+\alpha} \theta(r'_m)}.\]
By definition of $\theta$ and $r'_m$ we will have
\begin{equation}\label{nondeg0}
\|v_m\|_{L^\infty(B_1)} \ge \frac 12.
\end{equation}
In addition, by definition of $K_m = K_{r'_m,j_m}$ we have
\begin{equation}\label{orthog}
\int_{B_1}  v_m(x)\varphi_m(r'_m x) \,dx=0
\end{equation}
for all $m\geq1$.

Let us prove that
\begin{equation}\label{growthctrl}
\|v_m\|_{L^\infty(B_R)} \le  C R^{s+\alpha}\quad \mbox{for all } R\ge 1.
\end{equation}

Indeed, first, by definition of $\theta(2r)$ and $\theta(r)$,
\[
\begin{split}
\frac{\bigl\|K_{2r,j}\varphi_j-K_{r,j} \varphi_j \bigr\|_{L^\infty(B_{r})}}{r^{s+\alpha}\theta(r)}
&\le \frac{2^{s+\alpha}\theta(2r)}{\theta(r)}\,
\frac{ \bigl\|u_j- K_{r,j}\varphi_j \bigr\|_{L^\infty(B_{2r})}}{(2r)^{s+\alpha}\theta(2r)}
+ \frac{\bigl\|u_j-K_{r/2,j} \varphi_j \bigr\|_{L^\infty(B_{r})}}{r^{s+\alpha}\theta(r)}
\\
&\le 2^{s+\alpha}+1 \le 5.
\end{split}
\]

On the one hand, using Step 1 we have
\[\begin{split}
\frac{|K_{2r,j}-K_{r,j}| (r/2)^{s+\epsilon}}{r^{s+\alpha}\theta(r)}
&\le  \frac{|K_{2r,j}-K_{r,j}| \,\|\varphi_j \|_{L^\infty(B_{r})}}{r^{s+\alpha}\theta(r)}
\\
& = \frac{\bigl\|K_{2r,j}\varphi_j-K_{r,j} \varphi_j \bigr\|_{L^\infty(B_{r})}}{r^{s+\alpha}\theta(r)}
\\
&\le 5,
\end{split}\]
and therefore
\begin{equation}\label{difK's}
|K_{2r,j}-K_{r,j}|  \le 10\, r^{\alpha-\epsilon}\theta(r),
\end{equation}
which we will use later on in this proof.

On the other hand, by \eqref{iterated2} in Step 1 we have, whenever $0<2^lr\le 2^Nr\le 1$,
\[\bigl\| \varphi_j \bigr\|_{L^\infty(B_{2^N r})} \le 2^{(s+\epsilon)(N-l)}\bigl\| \varphi_j \bigr\|_{L^\infty(B_{2^l r})} \]
and therefore
\[\begin{split}
\frac{\bigl\|K_{2^{l+1}r,j}\varphi_j-K_{2^l r,j} \varphi_j \bigr\|_{L^\infty(B_{2^N r})}}{r^{s+\alpha}\theta(r)}
&= \frac{ \bigl|K_{2^{l+1}r,j}-K_{2^l r,j}\bigr|\,\bigl\| \varphi_j \bigr\|_{L^\infty(B_{2^N r})}}{r^{s+\alpha}\theta(r)}
\\
&\le \frac{ \bigl|K_{2^{l+1}r,j}-K_{2^l r,j}\bigr|\, 2^{(s+\epsilon)(N-l)}\bigl\| \varphi_j \bigr\|_{L^\infty(B_{2^l r})}}{r^{s+\alpha}\theta(r)}
\\
&=  \frac{2^{(s+\epsilon)(N-l)} \bigl\|K_{2^{l+1}r,j}\varphi_j-K_{2^l r,j} \varphi_j \bigr\|_{L^\infty(B_{2^l r})}}{r^{s+\alpha}\theta(r)}
\\
&=  \frac{2^{l(s+\alpha)} \theta(2^lr)}{\theta(r)} \frac{2^{(s+\epsilon)(N-l)} \bigl\|K_{2^{l+1}r,j}\varphi_j-K_{2^l r,j} \varphi_j \bigr\|_{L^\infty(B_{2^l r})}}{(2^lr)^{s+\alpha}\theta(2^lr)}
\\
&\le 10 \,2^{(s+\epsilon)N}\,2^{l(\alpha-\epsilon)}.
\end{split}\]
Thus,
\[\frac{\bigl\|K_{2^{N}r,j}\varphi_j-K_{r,j} \varphi_j \bigr\|_{L^\infty(B_{2^N r})}}{r^{s+\alpha}\theta(r)}
\le  \,2^{(s+\epsilon)N} \sum_{l=0}^{N-1}2^{l(\alpha-\epsilon)}  \le C  2^{(s+\alpha)N},\]
where we have used that $\epsilon\in(0,\alpha)$.

Form the previous equation we deduce
\[
\frac{\bigl\|K_{Rr,j}\varphi_j-K_{r,j} \varphi_j \bigr\|_{L^\infty(B_{R r})}}{r^{s+\alpha}\theta(r)}
\le C  R^{s+\alpha}
\]
whenever $0<r\le Rr\le 1$.

Hence,
\[
\begin{split}
\|v_m\|_{L^\infty(B_R)} &= \frac{1}{\theta(r'_m)(r'_m)^{s+\alpha}} \bigl\|u_m - K_m\varphi_m \bigr\|_{L^\infty(B_{Rr'm})}
\\
&\le \frac{R^{s+\alpha}\bigl\|u_{j_m} - K_{Rr'_m,j_m}\varphi_{j_m} \bigr\|_{L^\infty(B_{Rr'm})}}{\theta(r'_m)(Rr'_m)^{s+\alpha}}
 + \frac{\bigl\|K_{Rr'_m,j_m}\varphi_{j_m} -K_{r'_m,j_m} \varphi_{j_m} \bigr\|_{L^\infty(B_{{Rr'_m})}}}{(r'_m)^{s+\alpha}\theta(r'_m)}
\\
&\le  \frac{R^{s+\alpha}\theta(Rr'_m)}{\theta(r'_m)} + CR^{s+\alpha}
\\
&\le C R^{s+\alpha},
\end{split}
\]
whenever $Rr'_m \le 1$.

When $Rr'_m \ge 1$ we simply use the assumption \eqref{assumptiongrowth}, namely,
\[ \| au_m+b\varphi_m \|_{L^\infty(\R^n)}\le C_0(|a|+|b|)R^{s+\alpha} \quad \mbox{for all }R\ge 1,\]
twice, with $a=1$, $b= -K_{1,j_m}$ and with $a=0$, $b= 1$  to estimate
\[
\begin{split}
\|v_m\|_{L^\infty(B_R)} &= \frac{1}{\theta(r'_m)(r'_m)^{s+\alpha}} \bigl\|u_m - K_m \varphi_m \bigr\|_{L^\infty(B_{Rr'm})}
\\
&\le \frac{R^{s+\alpha}\bigl\|u_{j_m} - K_{1,j_m}\varphi_{j_m} \bigr\|_{L^\infty(B_{Rr'm})}}{\theta(r'_m)(Rr'_m)^{s+\alpha}}
 + \frac{ \bigl\|K_{1,j_m}\varphi_{j_m}-K_{r'_m,j_m} \varphi_{j_m} \|_{L^\infty(B_{R r'_m})}}{(r'_m)^{s+\alpha}\theta(r'_m)}
\\
&\le C_0 (1+|K_{1,j_m}| )  R^{s+\alpha}
+ \frac{ \bigl\|K_{1,j_m}\varphi_{j_m}-K_{r'_m,j_m} \varphi_{j_m} \|_{L^\infty(B_{1})}}{(r'_m)^{s+\alpha}\theta(r'_m)} \frac{\|\varphi_{j_m} \|_{L^\infty(B_{R r'_m})}}{\|\varphi_{j_m} \|_{L^\infty(B_{1})}}
\\
&\le C R^{s+\alpha} + C\left( \frac{1}{r'_m}\right)^{s+\alpha} (Rr'_m)^{s+\epsilon}
\\
&\le C R^{s+\alpha} + C(r'_m)^{-s-\alpha} (Rr'_m)^{s+\alpha} \le CR^{s+\alpha},
\end{split}
\]
where we have used $|K_{1,j_m}|\le C$ (that we will prove in detail in Step 3).

\vspace{2mm}

{\em Step 3.}  We prove that a subsequence of $v_m$ converges locally uniformly to a entire solution $v_\infty$ of the problem
\begin{equation}\label{limitting}
\begin{cases}
M^+ v_\infty \ge  0 \ge M^- v_\infty \quad &\mbox{in } \{e\cdot x >0\}\\
v_\infty =  0 &\mbox{in } \{e\cdot x <0\}.
\end{cases}
\end{equation}

By assumption,  the function $w = au_m + b\varphi_m$ satisfies
\begin{equation}\label{eqnpointbdryreg2}
\begin{cases}
M^+ (au_m+b\varphi_m)  \ge  -\delta(|a|+|b|)d^{\alpha-s}  \quad &\mbox{in } B_1\cap \Omega_m\\
u_m=\varphi_m =  0 & \mbox{in }B_1\setminus \Omega_m,
\end{cases}
\end{equation}
for all $a,b\in \R$.

Now, using \eqref{difK's} we obtain
\[
\begin{split}
\frac{|K_{1,j}- K_{2^{-N},j}|}{\theta(2^{-N})}
& \le \sum_{l=0}^{N-1}  \frac{|K_{2^{-N+l+1},j}- K_{2^{-N+l},j}|}{\theta(2^{-N})}
\\
& = \sum_{l=0}^{N-1} 10\, \frac{\theta(2^{-N+l})}{\theta(2^{-N})} 2^{(-N+l)(\alpha-\epsilon)}
\\
&\le 10\, \sum_{l=0}^{N-1} 2^{(-N+l)(\alpha-\epsilon)} \le C,
\end{split}
\]
since $\alpha-\epsilon>0$.

Next, using \eqref{integral2} ---that holds with $\varphi$ replaced by $\varphi_j$---,the definition $K_{r,j}$, and that $\|\varphi_j\|_{L^\infty(B_1)}=1$ while $\|u_j\|_{L^\infty(B_1)}\le C_0$, we obtain
\begin{equation}\label{K1j}
 \bigl| K_{1,j}\bigr|  = \left| \frac{\int_{B_1} u_j\,\varphi_j\,dx}{\int_{B_1} \varphi_j^2\,dx}\right| \le C.
\end{equation}

Thus
\[  \frac{\bigl| K_{2^{-N},j}\bigr|}{\theta(2^{-N})} \le \frac{\bigl| K_{1,j}\bigr|}{\theta(2^{-N})} +\frac{\bigl| K_{1,j}- K_{2^{-N},j}\bigr|}{\theta(2^{-N})} \le C \]

Using this control for $K_{r,j}$ and setting in \eqref{eqnpointbdryreg2}
\[ a = \frac{1}{\theta(r'_m)} \quad \mbox{and} \quad b=  \frac{-K_{r'_m,j_m}}{\theta(r'_m)}\]
we obtain
\[
\begin{split}
M^+ v_m & =  \frac{(r'_m)^{2s}}{(r'_m)^{s+\alpha}\theta(r'_m)} M^+\left( \frac{1}{\theta(r'_m)} \,u_m - \frac{K_{r'_m,j_m}}{\theta(r'_m)} \varphi_m\right) (r'_m \,\cdot\,)
\\
&\ge  - C\delta\,\frac{d_m^{\alpha-s}}{\theta(r_m')}  \quad \mbox{in } B_{(r'_m)^{-1}}\cap (r'_m)^{-1}\Omega_m,
\end{split}
\]
where $d_m(x)={\rm dist}(x,r_m^{-1}\Omega_{k_m})$.
Similarly, changing sign in the previous choices of $a$ and $b$ we obtain
\[
-M^- (v_m) = M^+ (-v_m)  \ge  - C\delta\,\frac{d_m^{\alpha-s}}{\theta(r_m')}  \quad \mbox{in } B_{(r'_m)^{-1}}\cap (r'_m)^{-1}\Omega_m
\]
As complement datum we clearly have
\[v_m =  0 \quad \mbox{in }B_{(r'_m)^{-1}}\setminus (r'_m)^{-1} \Omega_m.\]

Then, by Lemma \ref{C-alpha-est} we have
\[ \|v_m\|_{C^\gamma(B_R)} \le  C(R) \quad \mbox{for all } m \mbox{ large enough}.\]
The constant $C(R)$ depends on $R$, but not on $m$.

Then, by Arzel\`a-Ascoli and the stability lemma in \cite[Lemma 4.3]{CS2} we obtain
that
\[ v_m\rightarrow v_\infty \in C(\R^n),\]
locally uniformly, where $v_\infty$ satisfies the growth control
\[ \|v_\infty\|_{L^\infty(B_R)} \le  C R^{s+\alpha}\quad \mbox{for all } R\ge 1\]
and solves \eqref{limitting} in the viscosity sense.
Thus, by the Liouville-type result \cite[Proposition 5.1]{RS-K}, we find $v_\infty(x)=K(x\cdot e)_+^s$ for some $K\in \R$.

\vspace{2mm}

{\em Step 4.} We prove that as subsequence of $\tilde\varphi_m$, where
\[ \tilde  \varphi_m (x) = \frac{\varphi_m(r'_m x)}{ \sup_{B_{r'_m}} \varphi_m},\]
converges locally uniformly to $(x\cdot e)_+^s$.

This is similar to Step 3 and we only need to use the estimates in Step 1, and the growth control \eqref{assumptiongrowth}, to obtain a uniform control of the type
\[ \|\tilde \varphi_m \|_{L^\infty(B_R)} \le C_0R^{s+\alpha} \quad \mbox{for all }R\ge1.\]
Using the estimates in Step 1  we easily show that
\[ \frac{(r'_m)^{2s}}{\sup_{B_{r'_m}} \varphi_m} \downarrow 0.\]
Thus, we use \eqref{eqnpointbdryreg2} with $a=0$ and $b= \bigl(\sup_{B_{r'_m}} \varphi_m\bigr)^{-1}$ to prove that $\tilde \varphi_m$ converges locally uniformly  to a solution $\tilde \varphi_\infty$ of
\[\begin{cases}
M^+ \tilde \varphi_\infty \ge  0 \ge M^- \tilde \varphi_\infty \quad &\mbox{in } \{e\cdot x >0\}\\
\tilde \varphi_\infty =  0 &\mbox{in } \{e\cdot x <0\},
\end{cases}\]
Then, using the Liouville-type result \cite[Proposition 5.1]{RS-K} and since
\[ \|\tilde \varphi_\infty\|_{L^\infty(B_1)} = \lim_{m\to \infty} \|\tilde \varphi_m\|_{L^\infty(B_1)}  =  \lim_{m\to \infty}1 =1\]
we get
\[ \tilde \varphi_\infty \equiv (x\cdot e)_+^s.\]
Hence, $\tilde\varphi_m(x)\rightarrow(x\cdot e)_+^s$ locally uniformly in $\R^n$.

\vspace{2mm}

{\em Step 5.}
We have $v_m\rightarrow K(x\cdot e)_+^s$ and $\tilde \varphi_m\rightarrow (x\cdot e)_+^s$ locally uniformly.
Now, by \eqref{orthog},
\[\int_{B_1} v_m(x) \tilde \varphi_m(x) \,dx=0.\]
Thus, passing this equation to the limits,
\[\int_{B_1} v_\infty (x) (x\cdot e)_+^s \,dx =0.\]
This implies $K=0$ and $v_\infty\equiv 0$.

But then passing to the limit \eqref{nondeg0} we get
\[\|v_\infty\|_{L^\infty(B_1)}\geq\frac12,\]
a contradiction.
\end{proof}

We next prove Theorems \ref{thmC1} and \ref{thmC1-2}.

\begin{proof}[Proof of Theorem \ref{thmC1-2}]
{\em Step 1.}
We first show, by a barrier argument, that for any given $\epsilon>0$ we have
\[cd^{s+\epsilon} \le u_i\leq Cd^{s-\epsilon}  \quad \mbox{in }B_{1/2},\]
where $d= {\rm dist}(\,\cdot\,,B_1\setminus\Omega)$, and $c>0$ is a constant depending only on $\Omega$, $n$, $s$, ellipticity constants.

First, notice that by assumption we have $M^- u_i = -M^+(-u_i) \le \delta$ and $M^+u_i\geq-\delta$ in $B_1\cap \Omega$.
Therefore, since $\sup_{B_{1/2}}u_i\geq1$, for any small $\rho>0$ by the interior Harnack inequality we find
\[\inf_{B_{3/4}\cap \{d\geq \rho\}}u_i\geq C^{-1}-C\delta\geq c>0,\]
provided that $\delta$ is small enough (depending on $\rho$).

Now, let $x_0\in B_{1/2}\cap \partial\Omega$, and let $e\in S^{n-1}$ be the normal vector to $\partial\Omega$ at $x_0$.
By the previous inequality,
\[\inf_{B_\rho(x_0+2\rho e)}u_i\geq c.\]
Since $\Omega$ is $C^1$, then for any $\eta>0$ there is $\rho>0$ for which
\[(x_0+\mathcal C_\eta)\cap B_{4\rho}\subset \Omega,\]
where $\mathcal C_\eta$ is the cone in Lemma \ref{homog-subsol}.

Therefore, using the function $\Phi_{\rm sub}$ given by Lemma \ref{homog-subsol}, we may build the subsolution
\[\psi=\Phi_{\rm sub}\chi_{B_{4\rho}(x_0)}+C_1\chi_{B_{\rho/2}(x_0+2\rho e)}.\]
Indeed, if $C_1$ is large enough then $\psi$ satisfies
\[M^-\psi\geq 1\quad\textrm{in}\quad (x_0+\mathcal C_\eta)\cap \bigl(B_{3\rho}(x_0)\setminus B_{\rho}(x_0+2\rho e)\bigr)\]
and $\psi\equiv0$ outside $x_0+\mathcal C_\eta$.

Hence, we may use $c_2\psi$ as a barrier, with $c_2$ small enough so that $u_i\geq c_2\psi$ in $B_{\rho}(x_0+2\rho e)$.
Then, by the comparison principle we find
\[u_i\geq c_2\psi,\]
and in particular
\[u_i(x_0+te)\geq c_3\,t^{s+\epsilon}\]
for $t\in (0,\rho)$.
Since this can be done for all $x_0\in B_{1/2}\cap \partial\Omega$, we find
\begin{equation}\label{step1}
u_i\geq c\,d^{s+\epsilon}\quad\textrm{in}\ B_{1/2}.
\end{equation}
Similarly, using the supersolution $\Phi_{\rm sup}$ from Lemma \ref{homog-subsol}, we find
\begin{equation}\label{step1B}
u_i\leq C\,d^{s-\epsilon}\quad\textrm{in}\ B_{1/2},
\end{equation}
for $i=1,2$.

\vspace{2mm}

{\em Step 2.}
Let us prove now that
\begin{equation}\label{comparable}
u_1\leq Cu_2\quad\textrm{in}\quad B_{1/2}.
\end{equation}
To prove \ref{comparable}, we rescale the functions $u_1$ and $u_2$ and use Proposition \ref{bdryregC1doms}.

Let $x_0\in B_{1/2}\cap \partial\Omega$, and let
\[\theta(r)=\sup_{r'>r}\frac{\|u_1\|_{L^\infty(B_{r'}(x_0))}+\|u_2\|_{L^\infty(B_{r'}(x_0))}}{(r')^{s+\epsilon}}.\]
Notice that $\theta(r)$ is monotone nonincreasing and that $\theta(r)\to\infty$ by \eqref{step1}.
Let $r_k\to 0$ be such that
\[\|u_1\|_{L^\infty(B_{r_k}(x_0))}+\|u_2\|_{L^\infty(B_{r_k}(x_0))}\geq \frac12\,(r_k)^{s+\epsilon}\theta(r_k),\]
with $c_0>0$, and define
\[v_k(x)=\frac{u_1(x_0+r_k x)}{(r_k)^{s+\epsilon}\theta(r_k)},\qquad w_k(x)=\frac{u_2(x_0+r_k x)}{(r_k)^{s+\epsilon}\theta(r_k)}.\]
Note that
\[\|v_k\|_{L^\infty(B_1)}+\|w_k\|_{L^\infty(B_1)}\geq \frac12.\]
Moreover,
\[\|v_k\|_{L^\infty(B_R)}=\frac{\|u_1\|_{L^\infty(B_{r_kR})}}{(r_k)^{s+\epsilon}\theta(r_k)}\leq \frac{\theta(r_kR)(r_kR)^{s+\epsilon}}{(r_k)^{s+\epsilon}\theta(r_k)}\leq R^{s+\epsilon},\]
for all $R\geq1$, and analogously
\[\|w_k\|_{L^\infty(B_R)}\leq R^{s+\epsilon}\]
for all $R\geq1$.

Now, the functions $v_k,w_k$ satisfy the equation
\[M^+(a v_k+bw_k)(x)=\frac{(r_k)^{2s}}{(r_k)^{s+\epsilon}\theta(r_k)}\,M^+(au_1+bu_2)(x_0+r_kx)\geq -C_0(r_k)^{s-\epsilon}\delta(|a|+|b|)\]
in $\Omega_k\cap B_{r_k^{-1}}$, where $\Omega_k=r_k^{-1}(\Omega-x_0)$.

Taking $k$ large enough, we will have that $\Omega_k$ satisfies the hypotheses of Proposition~\ref{bdryregC1doms} in $B_{1/\delta}$, and
\[M^+(a v_k+bw_k)\geq -\delta(|a|+|b|)\qquad \mbox{in}\quad \Omega_k\cap B_{1/\delta}.\]
Moreover, since $\sup_{B_1}v_k+\sup_{B_1}w_k\geq 1/2$, then either $\sup_{B_1}v_k\geq 1/4$ or $\sup_{B_1}w_k\geq 1/4$.
Therefore, by Proposition \ref{bdryregC1doms} we find that either
\[|v_k(x)-K_1w_k(x)|\leq C|x|^{s+\alpha}\]
or
\[|w_k(x)-K_2v_k(x)|\leq C|x|^{s+\alpha}\]
for some $|K|\leq C$.
This yields that either
\begin{equation}\label{case1}
|u_1(x)-K_1u_2(x)|\leq C|x-x_0|^{s+\alpha}
\end{equation}
or
\begin{equation}\label{case2}
|u_2(x)-K_2u_1(x)|\leq C|x-x_0|^{s+\alpha},
\end{equation}
with a bigger constant $C$.

Now, we may choose $\epsilon>0$ so that $\epsilon<\alpha/2$, and then \eqref{case2} combined with \eqref{step1}-\eqref{step1B} gives $K_2\geq c>0$, which in turn implies \eqref{case1} for $K_1=K_2^{-1}$, $|K_1|\leq C$.
Thus, in any case \eqref{case1} is proved.

In particular, for all $x_0\in B_{1/2}\cap \partial\Omega$ and all $x\in B_{1/2}\cap \Omega$ we have
\[u_1(x)/u_2(x)\leq K_1+\left|\frac{u_1(x)}{u_2(x)}-K_1\right|\leq K_1+C|x-x_0|^{s+\alpha}/u_2(x).\]
Choosing $x_0$ such that $|x-x_0|\leq Cd(x)$ and using \eqref{step1B}, we deduce
\[u_1(x)/u_2(x)\leq K_1+Cd^{s+\alpha}/d^{s-\epsilon}\leq C,\]
and thus \eqref{comparable} is proved.

\vspace{2mm}

{\em Step 3.}
We finally show that $u_1/u_2\in C^\alpha(\overline\Omega\cap B_{1/2})$ for all $\alpha\in(0,\bar\alpha)$.
Since this last step is somewhat similar to the proof of Theorem 1.2 in \cite{RS-stable}, we will omit some details.

We use that, for all $\alpha\in(0,\bar\alpha)$ and all $x\in B_{1/2}\cap \Omega$, we have
\begin{equation}\label{we-use}
\left|\frac{u_1(x)}{u_2(x)}-K(x_0)\right|\leq C|x-x_0|^{\alpha-\epsilon},
\end{equation}
where $x_0\in B_{1/2}\cap \partial\Omega$ is now the closest point to $x$ on $B_{1/2}\cap \partial\Omega$.
This follows from \eqref{case1}, as shown in Step 2.

We also need interior estimates for $u_1/u_2$.
Indeed, for any ball $B_{2r}(x)\subset\Omega\cap B_{1/2}$, with $2r=d(x)$, there is a constant $K$ such that $\|u_1-Ku_2\|_{L^\infty(B_r(x))}\leq Cr^{s+\alpha}$.
Thus, by interior estimates we find that $[u_1-Ku_2]_{C^{\alpha-\epsilon}(B_r(x))}\leq Cr^{s+\epsilon}$.
This, combined with \eqref{step1}-\eqref{step1B} yields
\begin{equation}\label{lkj}
[u_1/u_2]_{C^{\alpha-\epsilon}(B_r(x))}\leq C.
\end{equation}

Let now $x,y\in B_{1/2}\cap \Omega$, and let us show that
\begin{equation}\label{final}
\left|\frac{u_1(x)}{u_2(x)}-\frac{u_1(y)}{u_2(y)}\right|\leq C|x-y|^{\alpha-\epsilon}.
\end{equation}
If $y\in B_r(x)$, $2r=d(x)$, or if $x\in B_r(y)$, $2r=d(y)$, then this follows from \eqref{lkj}.
Otherwise, we have $|x-y|\geq \frac12\max\{d(x),d(y)\}$, and then \eqref{final} follows from \eqref{we-use}.

In any case, \eqref{final} is proved, and therefore we have
\[\|u_1/u_2\|_{C^{\alpha-\epsilon}(\overline\Omega\cap B_{1/2})}\leq C.\]
Since this can be done for any $\alpha\in(0,\bar\alpha)$ and any $\epsilon>0$, the result follows.
\end{proof}

\begin{proof}[Proof of Theorem \ref{thmC1}]
The proof is the same as Theorem \ref{thmC1-2}, replacing the Liouville-type result \cite[Proposition 5.1]{RS-K} by \cite[Theorem 4.1]{RS-stable}, and replacing $\bar\alpha$ by~$s$.
\end{proof}

\begin{rem}
Notice that in Proposition \ref{bdryregC1doms} we only require the right hand side of the equation to be bounded by $d^{\alpha-s}$.
Thanks to this, Theorem \ref{thmC1} holds as well for 
\begin{equation}\label{parabolic}
-\delta\,d^{\alpha-s} \leq f_i(x)\leq C_0d^{\alpha-s},\qquad\alpha\in(0,s).
\end{equation}
In that case, we get 
\[\|u_1/u_2\|_{C^\alpha(\Omega\cap B_{1/2})}\leq CC_0,\]
with the exponent $\alpha$ in \eqref{parabolic}.
\end{rem}

\begin{proof}[Proof of Corollary \ref{corC1}]
The result follows from Theorem \ref{thmC1}.
\end{proof}

\end{document}